\newcommand{\cha}{\operatorname{char}}
\theoremstyle{plain}
\newtheorem{thm}{Theorem}[section]
\newtheorem{lm}[thm]{Lemma}
\newtheorem{prop}[thm]{Proposition}
\newtheorem{lem}[thm]{Lemma}
\theoremstyle{definition}
\newtheorem{de}[thm]{Definition}
\newtheorem{ex}[thm]{Example}
\newtheorem{re}[thm]{Remark}
\newcommand{\CC}{{\mathbb C}}
\newcommand{\RR}{{\mathbb R}}
\newcommand{\ZZ}{{\mathbb Z}}
\newcommand{\NN}{{\ZZ_{\geq 0}}}
\newcommand{\NNp}{{\ZZ_{> 0}}}
\newcommand{\im}{\operatorname{im}}
\newcommand{\supp}{\operatorname{supp}}
\newcommand{\id}{\operatorname{id}}
\newcommand{\rk}{\operatorname{rk}}
\newcommand{\bb}{\mathbf{b}}
\newcommand{\Inc}{\operatorname{Inc}}
\newcommand{\GL}{\operatorname{GL}\nolimits}
\newcommand{\G}{{H}}            %For the action of G_m
\newcommand{\g}{{h}}            %element of \G
\newcommand{\Gr}{\operatorname{Gr}}
\newcommand{\coord}{\mathcal{O}}      %coordinate ring
\newcommand{\n}{{n_0}}          %Tensor length
\newcommand{\m}{{m_0}}          %Class mod n_0
\newcommand{\p}{{p}}            %Positive integer.
\newcommand{\mm}{{m}}           %Generic element of \m+\n\NN
\newcommand{\ms}{{\mu}}         %Same as \mm, but \ms \leq \M
\newcommand{\pol}{{f}}
\newcommand{\M}{M}              %Originally p_0
\newcommand{\ten}{{\xi}}        %Tensor name
\newcommand{\con}{{\xi}}        %Contraction; essentially the same as \ten
\newcommand{\vect}{{v}}
\newcommand{\tc}{{\zeta}}       %Tensor coordinate
\newcommand{\wo}{{u}}           %finite word
\newcommand{\ve}{{q}}           %vertex
\newcommand{\vet}{{\ve'}}
\newcommand{\ro}{{r}}            %center
\newcommand{\lea}{{\operatorname{leaf}}}        %leaf set
\newcommand{\Int}{{\operatorname{int}}}         %int
\newcommand{\ver}{{\operatorname{vert}}}        %vertex set
\newcommand{\edge}{\operatorname{edge}}
\newcommand{\rep}{{\operatorname{rep}_G}}       %rep_G
\newcommand{\eqm}{{\mathrm{CV}}}                %equivariant model
\newcommand{\eqminf}{{\eqm_{\infty}}}           %limit of equivariant model
\newcommand{\eqmo}{{\Psi}}                      %image of \Psi
\newcommand{\tre}{{T}}          %Tree
\newcommand{\di}{{d}}           %Dimension
\newcommand{\degr}{{D}}         %Degree
\newcommand{\Rhom}{{\Phi}}      %Ring hom
\newcommand{\pull}{{\phi}}      %pull-back of \Rhom
\newcommand{\Ainf}{A_\infty}
\newcommand{\Cinf}{\coord_\infty}
\newcommand{\Ginf}{\G_\infty}
\newcommand{\Sinf}{S_\infty}
\newcommand{\Zinf}{{Z_\infty}}
\newcommand{\Yinf}[1][k]{Y_{\infty}^{\leq #1}}
\newcommand{\Yp}[2][k]{Y_{#2}^{\leq #1}}
\newcommand{\bw}{\mathbf{w}}
\newcommand{\bwo}{\mathbf{\wo}}
\begin{document}
\title{Finiteness results for Abelian tree models}

\begin{abstract}
Equivariant tree models are statistical models used in the
reconstruction of phylogenetic trees from genetic data.
Here {\em equivariant} refers to a symmetry group
imposed on the root distribution and on the transition matrices
in the model. We prove that if that symmetry group is Abelian,
then the Zariski closures of these models
are defined by polynomial equations of bounded degree, independent of the tree.
Moreover, we show that there exists a polynomial-time membership
test for that Zariski closure. This generalises earlier
results on tensors of bounded rank, which correspond to the
case where the group is trivial and the tree is a star, and implies a qualitative variant
of a quantitative conjecture by Sturmfels and Sullivant in
the case where the group and the alphabet coincide. Our
proofs exploit the symmetries of an infinite-dimensional projective limit of
Abelian star models.
\end{abstract}

\author[J.~Draisma]{Jan Draisma}
\address[Jan Draisma]{
Department of Mathematics and Computer Science\\
Technische Universiteit Eindhoven\\
P.O. Box 513, 5600 MB Eindhoven, The Netherlands;
and VU and CWI, Amsterdam, The Netherlands}
\thanks{Both authors are supported by the first author's Vidi grant from
the Netherlands Organisation for Scientific Research (NWO)}
\email{j.draisma@tue.nl}

\author[R.H.~Eggermont]{Rob H. Eggermont}
\address[Rob H. Eggermont]{
Department of Mathematics and Computer Science\\
Technische Universiteit Eindhoven\\
P.O. Box 513, 5600 MB Eindhoven, The Netherlands}
\email{r.h.eggermont@tue.nl}

\maketitle

\tableofcontents

\section{Introduction}

Tree models are families of probability distributions used in modelling
the evolution of a number of extant species from a common ancestor. Here
{\em species} can refer to actual biological species, but tree models have
also been applied to other forms of evolution, e.g. of languages. The hypothesis
underlying tree models is that DNA-sequences of those extant species,
arranged and suitably aligned in a table with one row for each species,
can be meaningfully read off column-wise. Indeed, these columns (or
{\em sites}) are assumed to be independent draws from one and the same
probability distribution belonging to the model.

To describe that model, one fixes a finite rooted tree $\tre$ whose leaves
correspond to the species and whose root $\ro$ corresponds to the common
ancestor. One also fixes a finite alphabet $B$. The case
where $B=\{A,C,G,T\}$ is the alphabet of nucleotides is of
most interest in biology, but the theory developed here works
for arbitrary finite $B$. Associated to each vertex of the
tree is a copy of $B$. To $\ro$ one attaches a probability distribution
$\pi$ on $B$, and to each edge $\ve \to \vet$, directed away from $\ro$,
one attaches a $B \times B$-matrix $A_{\ve\vet}$ of real non-negative numbers
whose row sums equal $1$. Its entry $A_{\ve\vet}(b,b')$ at position $(b,b')$
records the probability that the letter $b$ at vertex $\ve$ mutates into the
letter $b'$ at vertex $\vet$. The random process modelling evolution of the
nucleotide at a single position consists of drawing a letter $b \in B$
from the distribution $\pi$ and mutating it along the edges with the
probabilities given by the matrices $A_{\ve\vet}$. The probability that
this leads to a given word $\bb \in B^{\lea(\tre)}$ equals
\[ P(\bb)=\sum_{\bb' \in B^{\ver(\tre)} \text{ extending }
\bb} \pi(b'_\ro) \cdot \prod_{\ve \to \vet \in \edge(\tre)} A_{\ve\vet}(b'_\ve,b'_\vet), \]
Now as the root distribution $\pi$ and the transition matrices
$A_{\ve\vet}$ vary, the set of all probability distributions $P \in
\RR^{(B^{\lea(\tre)})}$ thus obtained is called the model.  The fact
that the entries of $P$ are polynomial functions of the parameters
has led to an extensive study of the {\em algebraic variety} swept
out by this parameterisation, by which we mean the Zariski closure in
$\RR^{(B^{\lea(\tre)})}$ (or even $\CC^{(B^{\lea(\tre)})}$) of the model
\cite[Chapter 4]{Pachter05}; see also the expository paper
\cite{Cipra07}. The present paper also concerns that Zariski closure.

The model without further restrictions on the root distributions $\pi$
or the transition matrices $A_{\ve\vet}$ is known as the {\em general
Markov model} for the tree $\tre$ and the alphabet $B$. In applications
the number of parameters is often reduced by impo\-sing further symmetry,
reflecting additional biological (or, say, linguistic) structure. This
is often\footnote{But not always! Most notably, the general {\em
time-reversible Markov model}, where the only restriction on the
transition matrices is that they be symmetric, is not of this form for
$|B|>2$. We have not tried to generalise our results to this case.}
done by choosing a finite group $G$ acting by permutations on the set
$B$, requiring that $\pi$ be a $G$-invariant distribution (which when
$G$ acts transitively means that it is the uniform distribution),
and requiring that each transition matrix $A_{\ve\vet}$ satisfies
$A_{\ve\vet}(gb,gb')=A_{\ve\vet}(b,b')$ for all letters $b,b' \in B$. The
resulting model, which is a subset of $\RR^{(B^{\lea(\tre)})}$ contained
in the general Markov model, has been dubbed the {\em equivariant tree
model} for the triple $(\tre,B,G)$ \cite{Draisma07b}; here we implicitly
mean that the action of $G$ on $B$ is also fixed. The special case where
$G$ is Abelian and $B=G$ with the left action of $G$ on itself is called a
{\em group-based model}. Our first two main theorems concern the class of
equivariant tree models for which $G$ is Abelian, but does not necessarily
act transitively on $B$.  This class includes the general Markov model
(with $G=\{1\}$) as well as group-based models.

\begin{thm}[Main Theorem I]
For any action of an Abelian group $G$ on a finite alphabet $B$, there
exists a uniform bound $\degr=\degr(B,G)$ such that for any finite tree
$\tre$ the Zariski closure of the equivariant tree model for $(\tre,B,G)$
is defined by polynomial equations of degree at most $\degr$.
\end{thm}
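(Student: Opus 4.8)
The plan is to reduce the statement, in two steps, to a Noetherianity-up-to-symmetry property of an infinite-dimensional star model, generalising the argument known for tensors of bounded rank by carrying along the extra structure coming from the Abelian group $G$. The first step is a \emph{Fourier reduction}. Since $G$ is Abelian I would decompose $\CC^B$ into $G$-isotypic components --- in the group-based case $B=G$ this is the classical Fourier transform of Hendy--Penny, Evans--Speed and Sz\'ekely--Steel--Erd\H{o}s, and for a general action of $G$ on $B$ one performs it on each $G$-orbit in $B$ separately. After this linear $G$-equivariant change of coordinates the transition matrices become (block-)diagonal and the parameterisation of the equivariant tree model acquires a \emph{quasi-toric} normal form: each Fourier coordinate of $P$, indexed by a tuple of isotypic labels on the leaves, is either identically $0$ or a product over the edges of $\tre$ of scalar edge parameters. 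All subsequent constructions then take place in polynomial rings whose variables are these Fourier coordinates.

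The second step reduces \emph{from arbitrary trees to claw (star) trees}. Cutting an internal edge $e$ of $\tre$ partitions $\lea(\tre)$ into two blocks, and the associated flattening exhibits $P_\tre$ as a tensor whose slices are again equivariant tree models for the two pieces and whose rank along $e$ is at most $|B|$. Iterating this over all internal edges, one shows that the defining equations of the Zariski closure of the equivariant tree model for $\tre$ are pulled back, along maps that do not increase degree, from three sources: the defining equations of the star models $\eqm_{d_v}$ at the internal vertices $v$ of $\tre$ (where $d_v$ is the degree of $v$); determinantal ``flattening-rank $\le |B|$'' equations; and bilinear ``gluing'' equations along the internal edges --- the latter two generalising the quadrics of the toric fiber product construction of Sturmfels--Sullivant and having degree at most $|B|+1$. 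Since the degrees $d_v$ are unbounded over all trees, what remains is to bound, uniformly in $n$, the degree of the defining equations of the $n$-leaf star model $\eqm_n$.

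For the star models I would pass to the projective limit $\eqminf=\varprojlim_n \eqm_n$ over the leaf-marginalisation maps $\eqm_{n+1}\to\eqm_n$ (summing over the last leaf; in Fourier coordinates, restricting to the trivial character in that slot), sitting inside the colimit $R_\infty$ of the ambient polynomial rings, which is a polynomial ring in countably many variables. The object $\eqminf$ carries an action of the group $\mathrm{Sym}(\NN)$ permuting leaves, of a diagonal torus reflecting the Fourier-transformed $G$-symmetry, and of the stabilisation monoid $\Inc(\NN)$ of increasing injections $\NN\to\NN$; its defining ideal $I(\eqminf)$ is stable under all of these. The crux is a \emph{Noetherianity-up-to-symmetry} statement: $R_\infty$ is Noetherian up to this symmetry, so $I(\eqminf)$ is generated by finitely many orbits, hence by elements of some bounded degree $\degr_0$. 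Spreading these finitely many generators back out to each finite level $\eqm_n$ --- moving a generator supported on a bounded leaf set into $[n]$ using the shift monoid while keeping the cofactors under control --- then shows $I(\eqm_n)$ is generated in degree at most $\degr_0$ for every $n$. Together with the two reduction steps, and absorbing the bounded contribution ($\le |B|+1$) of the flattening and gluing equations, this yields the uniform bound $\degr=\degr(B,G)$.

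The main obstacle is the Noetherianity-up-to-symmetry of $\eqminf$ together with this compatible delocalisation. One must identify a symmetry group that is large enough for finitely many orbits to genuinely suffice, yet structured enough to act on each finite level in a degree-preserving way; this is precisely the role of $\Inc(\NN)$ (and its interplay with $\mathrm{Sym}(\NN)$ and the torus). For the finite-generation itself I expect to degenerate the already quasi-toric ideal of $\eqminf$ to a monomial ideal by a Gr\"obner argument and invoke combinatorial Noetherianity of $\Inc(\NN)$-stable monomial ideals in countably many variables --- the trivial-group case being exactly the bounded-rank-tensor result --- now carried out equivariantly for the extra torus action. A secondary difficulty is to make the tree-to-star reduction ideal-theoretic, not merely set-theoretic, when $G$ acts non-transitively and the models are genuinely non-toric, so that the toric fiber product machinery is unavailable and must be replaced by the flattening argument above.
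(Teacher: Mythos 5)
Your overall strategy tracks the paper's: decompose $V$ into $G$-isotypes using that $G$ is Abelian, reduce from trees to stars via a result on ideals of equivariant tree models (the paper uses Theorem~1.7 of \cite{Draisma07b} for exactly this), pass to an infinite-dimensional projective limit of star models, and invoke Noetherianity up to symmetry. The reductions and the role of $\Inc(\NN)$ are correctly identified. However, the proposal has a genuine gap at the crux, and it is the same place you yourself flag as ``the main obstacle.''

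The central claim, that $R_\infty$ (the ring of polynomial functions on the projective limit $\Ainf$) is equivariantly Noetherian, is false, and the paper does not assert it. In the coordinates $\ten_{\mm,\wo,w}$ the variables are indexed by $\wo\in[\di]^\mm$ together with an infinite word $w$ of finite support over an alphabet of size $\di^{\n}$; the support of $w$ can have any size, so this is not a ``bounded width times $\NN$'' polynomial ring, and the Cohen/Hillar--Sullivant theorem you want to invoke (for $\Inc(\NN)$-stable ideals, monomial or not) does not apply. Your fallback---Gr\"obner-degenerate the quasi-toric ideal of $\eqminf$ and cite combinatorial Noetherianity of $\Inc(\NN)$-stable monomial ideals---runs into the same width obstruction. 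What the paper actually proves (Theorem~\ref{thm:YEqNoether}) is the much more delicate statement that the \emph{flattening variety} $\Yinf$ is $\Ginf$-Noetherian as a \emph{topological space}. The proof is a stratification by flattening rank: on the open stratum where a fixed $k\times k$ minor is invertible, the variety is parameterised by a ring in the variables $\ten_{\mm,\wo,w}$ with $\supp w$ of size at most $1$, which \emph{is} of bounded width, and the remaining closed stratum is handled by induction on $k$. This intermediate object $\Yinf$ and the rank stratification are the heart of the argument and are absent from the proposal; without them the Noetherianity step does not go through, and the existence of $\Yinf$ and its finitely many defining orbits (Proposition~\ref{prop:YFinite}) also rests on the subspace-contraction Lemma~\ref{lem:SubspaceContraction}, which requires real work.

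Two smaller but real issues. First, your projective limit uses leaf marginalisation, i.e.\ contraction along the trivial-character projection in one slot. This is insufficient: for non-trivial $G$, a $G$-invariant tensor such as $e_1^{\otimes m}$ with $e_1$ a $-1$-eigenvector and $m$ even has \emph{all} of its single-slot trivial-character contractions equal to zero, so the analogue of Lemma~\ref{lem:SubspaceContraction} fails for $\n=1$. The paper must instead contract by a carefully built $G$-invariant tensor $\ten_0$ of length $\n=|G|$ (or the exponent of $G$), precisely so that $\ten_0$ evaluates non-trivially on blocks of constant character; this forces the slightly awkward bookkeeping of $\n$ congruence classes. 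Second, you assert that $I(\eqminf)$ is generated by finitely many orbits; the paper proves only the set-theoretic statement that $\eqminf$ (or any closed $\Ginf$-stable subset of $\Yinf$) is \emph{cut out} by finitely many orbits of equations, which is weaker but suffices for Main Theorem~I, and indeed the paper explicitly emphasises that its result is set-theoretic rather than ideal-theoretic. Claiming the ideal-theoretic version would require an argument you do not supply.
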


In fact, we will prove the stronger statement that {\em finitely many
types of equations} suffice to define the Zariski closures of the
equivariant tree models for all $\tre$. For the general Markov model, this result first appeared in
\cite{Draisma11d}. For group-based models, where the Zariski closure of
(the cone over) the tree model for $(\tre,B,G)$ is a toric variety, a
much stronger conjecture was put forward in \cite{Sturmfels05b}, namely,
that for any tree $\tre$ the ideal of that toric variety is generated by
binomials of degree at most $|G|$. This would imply that $\degr(B,G)=|G|$
suffices when $G$ acts transitively on $B$. Our result is weaker in
that we do {\em not} prove the existence of a degree bound for polynomials
generating the ideal---our result is {\em set-theoretic} rather than
{\em ideal-theoretic}---and that we do not find an explicit bound.
Nevertheless, Main Theorem I is the first general finiteness result even
for the restricted class of group-based models, though for group-based
models more recent work by Michalek \cite{Michalek12} gives finiteness
results at the level of projective schemes, which are somewhere between
set-theoretic and ideal-theoretic results.

\begin{thm}[Main Theorem II]
For any action of a finite group $G$ on a finite alphabet
$B$, there exists a polynomial-time algorithm that, on input
a tree $\tre$ and a probability distribution $P$ on $B^{\lea(\tre)}$
determines if $P$ lies in the Zariski closure of the equivariant tree
model for $(\tre,B,G)$.
\end{thm}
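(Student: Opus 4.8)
The plan is to reduce Main Theorem II to Main Theorem I together with an effective description of the Zariski closure of the equivariant tree model. By Main Theorem I the closure is cut out by polynomials of degree at most $\degr(B,G)$, a constant depending only on the fixed data $(B,G)$, not on $\tre$. So the membership test amounts to: given $\tre$ and $P$, decide whether $P$ satisfies all equations of degree $\leq \degr$ vanishing on the model. The subtlety is that the number of coordinates of $P$, namely $|B|^{|\lea(\tre)|}$, is itself exponential in the tree, so ``$P$'' is already exponentially long; to get a \emph{polynomial-time} algorithm in the sensible input model, one must presumably take as input a succinct encoding of $P$ --- for instance, the parameters $(\pi, (A_{\ve\vet})_{\ve\to\vet})$ themselves, or the distribution $P$ already given in factored/tensor-network form --- and I would spell out that convention first.

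**The core reduction.** First I would pass to the Fourier/Hadamard coordinates adapted to the Abelian group $G$: after a change of basis depending only on $(B,G)$, the parameterisation of the equivariant tree model becomes multilinear and, edge by edge, block-diagonal --- this is the classical observation for group-based models, extended to the non-transitive Abelian case using the earlier sections' analysis of Abelian star models. In these coordinates the model for a star tree is an explicitly described variety, and the model for a general tree is obtained by gluing stars at internal edges (a ``flattening'' / tensor-contraction operation). The key structural input I would invoke is whatever normal form or toric/combinatorial description of the Abelian star model the paper has set up in the sections preceding the theorems (the ``infinite-dimensional projective limit of Abelian star models'' mentioned in the abstract): this should give, for each fixed $k \leq \degr$, a bounded-size list of ``local'' certificates whose satisfiability along the tree is equivalent to membership.

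**The algorithm.** With that in hand, the algorithm runs as follows. (1) From the succinct input, compute the relevant Fourier coordinates of $P$ that appear in any degree-$\leq\degr$ test; since each such polynomial involves at most $\degr$ coordinates and $\degr$ is constant, only polynomially many coordinates of $P$ (as a function of the number of edges of $\tre$, hence of the input size) are ever touched, and each is computed by a polynomial-size arithmetic expression from the factored form. (2) Traverse $\tre$ from the leaves to the root, maintaining at each internal vertex a bounded amount of data recording which of the finitely many ``local states'' of the Abelian star model at that vertex are compatible with the portion of $P$ seen so far --- a dynamic-programming pass whose table size is bounded by a constant depending only on $(B,G)$ and $\degr$. (3) Accept iff the root table is non-empty, i.e.\ iff all the degree-$\leq\degr$ equations from Main Theorem I are satisfied. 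Correctness is exactly Main Theorem I; the running time is linear in $|\edge(\tre)|$ times a constant, hence polynomial.

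**The main obstacle.** The hard part is step (2): Main Theorem I only asserts the \emph{existence} of a degree bound $\degr$, with no handle on \emph{which} polynomials of that degree cut out the model, and in particular no a priori list we can test against in polynomial time. So the real work is to show that the membership problem for the Zariski closure is itself decidable in polynomial time \emph{using only the bounded degree}, which I expect to do by exhibiting the ``local'' gluing description above and proving that a $P$ lies in the closure iff the leaf-to-root dynamic program succeeds --- equivalently, that the closure of the tree model equals the variety defined by a tree-structured conjunction of bounded-size local conditions. Establishing that equivalence (as opposed to just a degree bound) is where the projective-limit symmetry machinery of the earlier sections must be pushed a little further, and it is also the step where one must be careful that the change to Fourier coordinates, the handling of the non-transitive $G$-action on $B$, and the succinct input encoding all cohere. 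Once that local characterisation is in place, everything else is bookkeeping.
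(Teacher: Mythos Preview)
Your proposal rests on a mistaken input model. The paper is explicit that ``polynomial-time'' means polynomial in the bit-size of $P$ in a \emph{non-sparse} representation (equivalently, polynomial in $|B|^{|\lea(\tre)|}$ arithmetic operations in the BSS model). So $P$ is given in full, and there is no need for a succinct encoding, Fourier-coordinate sparsification, or a dynamic-programming pass that touches only polynomially many coordinates in $|\edge(\tre)|$. Once the input is $|B|^{|\lea(\tre)|}$-sized, much of your machinery is unnecessary and some of the obstacles you raise disappear.

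More importantly, the paper's proof is structurally different from yours and does not face the obstacle you flag. The paper first reduces to stars via the known identity $I(\eqm(\tre))=\sum_{\ro} I(\eqm(\flat_\ro \tre))$ from \cite{Draisma07b}, then uses Lemma~\ref{lm:AllSame} to pass to equal leaf spaces $V=K[G]^n$. The key step is Theorem~\ref{thm:mainIX}: there is a constant $\M$ (depending only on $B,G$) such that $\omega\in\eqm_\mm$ if and only if every $G$-equivariant contraction of $\omega$ down to $V^{\otimes[\ms]}$ with $\ms\le \M$ lands in $\eqm_\ms$. The algorithm is then: precompute once and for all a defining set $E_\ms$ of equations for $\eqm_\ms$ for each $\ms\le \M$; on input $\omega$, for every subset $I\subseteq[\mm]$ with $|I|\ge \mm-\M$ contract $\omega$ along a \emph{formal} (generic) $G$-invariant tensor in $(V^*)^{\otimes I}$ and test whether the equations in $E_{\mm-|I|}$ vanish identically as polynomials in the formal contraction parameters. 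The number of such $I$ is $O(\mm^{\M})$, and each test is polynomial in $d^\mm$, so the whole procedure is polynomial in the input size. Your worry that ``Main Theorem~I only asserts existence of a degree bound with no handle on which polynomials'' is answered precisely here: one never needs the equations for large stars, only the finitely many precomputed $E_\ms$ with $\ms\le \M$, and correctness is supplied by the contraction criterion of Theorem~\ref{thm:mainIX}, not by Main Theorem~I directly. Your tree-traversal/dynamic-programming scheme with ``bounded local states'' has no counterpart in the paper and, as you yourself note, lacks a proof that such a local characterisation exists.
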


We hasten to say that our proofs are non-constructive. In particular,
they do not yield an explicit bound $\degr(B,G)$ and they do not
give an explicit algorithm---though the overall structure of that
algorithm is clear, see Section~\ref{sec:Proofs}. This situation is
reminiscent of Robertson-Seymour's non-constructive proof that any
minor-closed property of finite graphs can be tested in polynomial time
\cite{Robertson95,Robertson04}. In Main Theorem II, the notion of {\em
polynomial-time algorithm} depends on the (machine) representation of the
entries of $P$. If they are rational numbers, then we mean polynomial-time
in the bit-size of $P$ (in a non-sparse representation, i.e., zero entries
count). If they are abstract real numbers, then we mean a Blum-Shub-Smale
machine \cite{Blum89} whose number of arithmetic operations on real
numbers is bounded by some polynomial in $|B|^{|\lea(\tre)|}$.

Our Main Theorems I and II do {\em not} require that the trees $\tre$ be
trivalent. Indeed, for the class of trivalent trees, or indeed for the
class of trees with any fixed upper bound on the valency of internal
vertices, Main Theorems I and II are relatively easy consequences
of known results from
\cite{Allman04,Casanellas05,Sturmfels05b,Draisma07b}, which express the ideal of equations of an
equivariant tree model in terms of ideals of equivariant tree models
of {\em star trees}. Bounding the degree of polynomial equations for
large star models and the complexity of testing membership of their
Zariski closures is the real challenge in this paper. We stress that
this leaves open the question of actually finding (practical) algorithms
for testing membership of (Zariski closures of) tree models.
Our results should be interpreted as a theoretical contribution to the
algebraic statistics of tree models.

However, we do believe that some of the techniques that go into the
proofs of our Main Theorems I and II can be of practical use. In particular,
one crucial observation in our proofs is the following.  Consider the
equivariant star model for the triple $(\tre,G,B)$, where $\tre$ is a
star and where $G$ needs not be Abelian. Label the leaves of $\tre$ with
$0,\ldots,\mm-1$, so that $B^{\lea(\tre)}$ can be identified with $B^\mm$.
Fix a natural number $\n \leq \mm$ and any probability distribution $Q$
on $B^{\n}$ that is invariant with respect to the diagonal $G$-action
on $B^{\n}$. Then for any probability distribution $P$ on
$B^\mm$ we can define a probability distribution $P_Q$ on $B^{\mm-\n}$ by
\[ P_Q(\bb)=\frac{\sum_{\bb' \in B^{\n}} P(\bb,\bb') Q(\bb')}{Z} \]
where $P(\bb,\bb')$ is the probability of observing $\bb$ at positions
$0,\ldots,\mm-\n-1$ and $\bb'$ at positions
$\mm-\n,\ldots,\mm-1$. Here $Z$ is
a normalising factor, and a condition for this to be well-defined is
that $Z$ is non-zero. Let $\tre'$ be the tree obtained from
$\tre$ by deleting the last $\n$ leaves. Our elementary but useful
observation is that, for any fixed $G$-invariant $Q$, the (partially defined)
map $P \mapsto P_Q$ maps the equivariant model for $(\tre,G,B)$ into
the equivariant model for $(\tre',G,B)$. As a consequence, equations
for the latter model pull back to equations for the former model, and a
necessary condition for $P$ to be in
(the Zariski closure of) the former model is that for all $G$-invariant
$Q$ the distribution $P_Q$ lies in the latter model.

In the course of proving Main Theorems I and II we show that for some
suitable $\n$, chosen after fixing $G$ and its action on $B$, and for
some suitably chosen set of $G$-invariant probability distributions $Q$
on $B^{\n}$, the converse also holds: if a probability distribution $P$
on $B^\mm$ with $\mm \gg \n$ has the property that $P_Q$ lies in the star
model with $\mm-\n$ leaves for all chosen $Q$ on all cardinality-$\n$
subsets of the leaves, then $P$ lies in the star model with $\mm$
leaves. We do this by constructing an infinite-dimensional limit of all
$\mm$-star models for the pair $(G,B)$---or rather $\n$ of these limits,
one for each congruence class of $\mm$ modulo $\n$---and showing that
this limit lies in some infinite-dimensional {\em flattening variety}
that is Noetherian up its natural symmetries. This is also the technique
followed in \cite{Draisma11d} for the case where $G=\{1\}$; there $\n$
can be taken $1$. We simplify some of the arguments from that paper,
but our present, more general results are more subtle since they really
require the use of jumps by some carefully chosen $\n>1$.

This paper is organised as follows. In Section~\ref{sec:mainthm}
we briefly recall the well-known tensorification of the set-up above
(see, e.g. \cite{Allman04,Draisma07b}) and state two theorems for this
setting. Then in Section~\ref{sec:Tensors} we give some properties of
tensors in finite-dimensional $G$-representations that will motivate
the use of flattenings and our choice for $\n$.

In Section~\ref{sec:Infinite}, after fixing any value for $\n$, we introduce an
infinite-dimensional ambient space (again, $\n$ of these, one for each
congruence class modulo $\n$), containing an infinite-dimensional limit
of the equivariant models for finite stars; we dub this {\em the infinite
star model}. In this section we define the {\em flattening variety} as well,
a variety containing the infinite star model. This variety is defined
by determinantal equations of bounded degree, roughly corresponding to
the coarser star models where the leaves of a tree are partitioned into
two subsets.  We prove that the flattening variety is defined by finitely
many orbits of determinantal equations under the natural symmetry group
of the infinite tree model. Then in Section~\ref{sec:EqNoeth} we prove that
the flattening variety is Noetherian under this symmetry group. Finally,
our main theorems are derived from this in
Section~\ref{sec:Proofs}, and it is only here that we
need the infinite star model mentioned before.

We conclude this introduction with a list recording values
of our uniform bound $\degr(B,G)$ that are known to us.
\begin{description}
\item[Binary general Markov model] Here $G=\{1\}$ and $B$
has cardinality two, and results from \cite{Landsberg04} imply that
$\degr(B,G)$ can be taken equal to $3$; apart from linear
equations expressing that probabilities sum up to $1$, the
degree-$3$ equations are
the determinantal equations defining the flattening variety
(see Section~\ref{sec:Infinite}). The paper \cite{Raicu10}
proves the stronger statement, previously known as the
GSS-conjecture \cite{Garcia05}, that these equations generate the ideal of (the cone over)
the general Markov model.

\item[Binary Jukes-Cantor model] This is the group-based
model with $G=B=\ZZ/2\ZZ$, and results from \cite{Sturmfels05b} show that
$\degr(B,G)$ can be taken equal to $2$. The non-linear,
quadratic equations are determinantal equations defining the
finer flattening variety $\Yp[(k_\chi)_\chi]{[\mm]}$ from
Remark~\ref{re:Remarks}, item 5, and these generate the
ideal of the cone over the model. The algebra and geometry of this
model for varying trees is further studied in
\cite{Buczynska07}.

\item[Kimura 3-parameter model] This is the group-based
model with $G=B=\ZZ/2\ZZ \times
\ZZ/2\ZZ$, and results from
\cite{Michalek12}
show that $\degr(B,G)$ can be taken equal to $4$. The
degree-$4$
equations were known from \cite{Sturmfels05b}, where it was
conjectured that they generate the ideal. The result of
\cite{Michalek12} is slightly weaker than that but stronger than the
purely set-theoretic statements that we are after. The
geometry of this model is also studied in
\cite{Casanellas07,Michalek11}.
\end{description}

If one restricts oneself to trivalent trees, then
more is known for other models, as well, such as the
strand-symmetric model \cite{Casanellas05} or the
all-important 4-state general
Markov model \cite{Allman04,Friedland11,Bates10} or
further group-based models with small groups $G$
\cite{Sturmfels05b}.

One might wonder where the restriction to Abelian $G$ comes from; after
all, tree models for which $G$ is not Abelian are used in practice.
At this point, before going through the proofs, all we can say is that
they break down at the point where we prove that the infinite-dimensional
flattening variety is defined by finitely many orbits of equations;
see also Remark~\ref{re:NonAbelian}.

Finally, a word of self-criticism is in order here: it is unclear whether
the degree bound and the algorithm from our main theorems will be useful
in phylogenetic practice, even if they are made explicit. In phylogenetic
reconstruction, certain determinantal equations coming from edges often
suffice to distinguish the model for one tree from the model for another
tree (with $G$ and $B$ fixed) \cite{Casanellas09}. On the other hand,
our characterisation of general Abelian tree models using contractions
and flattenings gives more insight into the geometry of these models,
and our infinite-dimensional methods will likely apply to other models
from algebraic statistics.

\section*{Acknowledgments}
We thank Bernd Sturmfels and an anonymous referee for many helpful
suggestions for improving the text.

\section{Tensor formulation of the main results} \label{sec:mainthm}
Before we recall the tensorification of the model mentioned in the
introduction, we introduce notation that will be used throughout this
article.  Let $G$ be a finite Abelian group. For us, a $G$-representation
over a field $K$ will be assumed to be finite-dimensional, unless
explicitly mentioned otherwise.  Let $K$ be an infinite field such
that every $G$-representation over $K$ splits into
a direct sum of one-dimensional representations. For this it suffices,
for instance, that $K$ is algebraically closed and that $\cha K$ does not
divide $|G|$.  For $\mm \in \NN$, set $[\mm]:=\{0,\ldots,\mm-1\}$. If
$V_i$ is a $G$-representation over $K$ for each $i\in [\mm]$ and if
$I \subseteq [\mm]$, then we write $V_I:=\bigotimes_{i \in I} V_i$ for the
tensor product of the $V_i$ with $i \in I$. The rank of a tensor $\omega$
in $V_{I}$ is the minimal number of terms in any expression of $\omega$
as a sum of pure tensors $\bigotimes_{i \in I} \vect_i$ with $\vect_i
\in V_i$. A tensor $\omega$ has \emph{border rank} at most $k$ if it
lies in the Zariski closure of the set of tensors of rank at most $k$.

Given an $\mm$-tuple of linear maps $\phi_i: V_i \to U_i$,
where $U_i$ is also a vector space over $K$ for each $i \in [\mm]$, we write
$\phi_{[\mm]}:=\bigotimes_{i \in [\mm]}\phi_i$ for the linear map $V_{[\mm]} \to U_{[\mm]}$ determined by $\bigotimes_{i \in [\mm]} \vect_i \mapsto \bigotimes_{i
\in [\mm]} \phi_i(\vect_i)$. Clearly $\rk \phi_{[\mm]} \omega \leq \rk \omega$ for
any $\omega \in V_{[\mm]}$, and this inequality carries over to the border
rank.

If $I \subseteq [\mm]$ and $\ten \in \bigotimes_{i \in
I}V_i^*$, then the tensor $\ten$ induces a linear map
$V_{[\mm]} \rightarrow V_{[\mm] - I}$. We call this map the
\emph{contraction} along the tensor $\ten$; except for a
normalising factor, it is the
tensorial analogue of the map $P \mapsto P_Q$ from the
introduction. This map is $G$-equivariant if and only if $\ten$ is $G$-invariant; moreover, it does not increase the rank or the border rank of any element of $V_{[\mm]}$. We can now state our third main theorem.

\begin{thm}[Main Theorem III]
For all $k \in \NN$ there exists $\M$ such that for all $\mm > \M$
and for all $G$-modules $V_i$ over $K$ with $i \in [\mm]$, a tensor $\omega \in V_{[\mm]}$ has border rank at most $k$ if and only if for all $\ms \leq \M$, all its contractions in $\mm-\ms$ factors along $G$-invariant tensors have border rank at most $k$.
\end{thm}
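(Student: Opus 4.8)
The plan is to pass to an infinite-dimensional limit and exploit Noetherianity up to symmetry. First I would reduce to the ``hard'' direction: one implication is immediate, since contraction along a $G$-invariant tensor does not increase border rank (as recalled just above the statement), so if $\omega$ has border rank at most $k$ then so do all its contractions. For the converse, fix $k$ and let $\n$ be the natural number chosen after fixing $(G,B)$ via the representation-theoretic analysis promised in Section~\ref{sec:Tensors} (the parameter controlling the size of the ``jumps''). I would then set up, for each residue class of $\mm$ modulo $\n$, the infinite-dimensional ambient space from Section~\ref{sec:Infinite}, inside which the finite star models sit compatibly under the contraction maps; the union of all border-rank-$\leq k$ loci forms a subvariety of this space that is stable under the natural symmetry group (the ``infinite symmetric group'' acting on coordinates, together with the $G$-action), and it is contained in the flattening variety, which by the results of Section~\ref{sec:Infinite} is cut out by finitely many orbits of determinantal equations of bounded degree and, by Section~\ref{sec:EqNoeth}, is Noetherian under this symmetry group.

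The key step is then: because the ascending chain condition holds equivariantly on the flattening variety, the (equivariant) ideal of the infinite border-rank-$\leq k$ locus is generated by finitely many orbits of equations; each such equation involves only finitely many tensor factors, so there is a single $\M$ (depending on $k$, hence on $(G,B)$) beyond which no new equations appear. Concretely, I would argue that a tensor $\omega \in V_{[\mm]}$ with $\mm > \M$ whose every contraction down to $\mm - \ms$ factors (for all $\ms \leq \M$, along all $G$-invariant tensors) has border rank at most $k$ must, when lifted into the infinite-dimensional limit, satisfy all the generating equations of the infinite border-rank locus — because each generator is ``detected'' after contracting away all but at most $\M$ of the factors — and therefore lies in that locus. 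Pulling back down to the finite level (using that the finite star model is the appropriate slice of the infinite one, a point which will be made precise in Section~\ref{sec:Proofs} via the infinite star model), this forces $\omega$ itself to have border rank at most $k$.

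The main obstacle I expect is the passage from ``satisfies the equivariant generating equations'' back to ``has border rank at most $k$'' at the finite level: a priori the infinite border-rank locus could be strictly larger than the flattening variety, so one needs the reverse inclusion, i.e.\ that membership in the flattening variety together with the contraction conditions actually certifies low border rank. This is exactly where the infinite star model and the choice of $\n > 1$ enter: one must show that a point of the flattening variety which survives all contractions by the chosen finite family of $G$-invariant tensors $Q$ on $B^{\n}$ lies in the infinite star model, and this is the genuinely new subtlety compared to the $G = \{1\}$ case of \cite{Draisma11d}, where $\n = 1$ sufficed. A secondary technical point is uniformity of $\M$ across \emph{all} choices of the $G$-modules $V_i$; this is handled by the fact that every $V_i$ decomposes into one-dimensional characters, so only the multiplicities of the finitely many characters of $G$ matter, and these can be absorbed into the infinite-dimensional model built character by character (cf.\ Remark~\ref{re:Remarks}).
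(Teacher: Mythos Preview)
Your overall architecture is right and matches the paper: reduce to a fixed ambient space, pass to the projective limit $\Ainf$, use that the border-rank-$\leq k$ locus $\Zinf$ is a closed $\Ginf$-stable subset of the $\Ginf$-Noetherian flattening variety $\Yinf$, conclude that $\Zinf$ is cut out by finitely many $\Ginf$-orbits of equations living in some $\coord_M$, and then argue that a tensor whose small contractions all have low border rank yields an $\omega_\infty \in \Zinf$.

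However, two of your diagnoses are off, and one genuine ingredient is missing.

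First, the ``main obstacle'' paragraph is confused. You write that the infinite border-rank locus could be strictly larger than the flattening variety; the inclusion goes the other way ($\Zinf \subseteq \Yinf$), and that containment is exactly what lets Noetherianity of $\Yinf$ descend to $\Zinf$. More importantly, the infinite star model is \emph{not} used in the proof of Main Theorem~III at all; it enters only for Main Theorem~IV (the equivariant tree model). For Theorem~III one simply takes $Z_\mm$ to be the border-rank-$\leq k$ locus in $V^{\otimes[\mm]}$ and applies the abstract machinery (Theorem~\ref{thm:mainIX}) directly.

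Second, what you call a ``secondary technical point'' --- uniformity in the $V_i$ --- is in fact the \emph{first} step of the paper's proof, and it is not handled by a vague character-by-character absorption. It is Lemma~\ref{lm:AllSame}: one produces $G$-linear maps $\phi_i:V_i\to K[G]^n$ and $\psi_i:K[G]^n\to V_i$ with $\psi_{[\mm]}\phi_{[\mm]}(\omega)=\omega$, reducing everything to the single space $V=K[G]^n$.

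Third, the step you gloss as ``lifted into the infinite-dimensional limit'' hides the actual content. One must choose a $G$-\emph{invariant} vector $e_0\in V$ with $\ten_0(e_0^{\otimes\n})=1$ and set $\omega_\infty=\omega\otimes e_0^{\otimes\infty}$. The verification that every $\g f$ (with $f$ a defining equation of $Z_M$ and $\g\in\Ginf$) vanishes on $\omega_\infty$ then unwinds as follows: acting by $\g$ and projecting to $V^{\otimes[M]}$ amounts to contracting some factors of $\omega$ and some copies of $e_0$ along a $G$-invariant tensor; because $e_0$ is $G$-invariant, absorbing the $e_0$-contractions still leaves a $G$-invariant contraction of $\omega$ down to some $V^{\otimes[\ms]}$ with $\ms\leq M$, which by hypothesis lies in $Z_\ms$; finally one re-appends copies of $e_0$ to land in $Z_M$ (this uses $\iota(Z_\ms)\subseteq Z_{\ms+1}$). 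This chain --- $G$-invariance of $e_0$, compatibility of $\iota$ with the $Z_\mm$, and the reinterpretation of $\g f$ as a contraction followed by an element of $H_M$ --- is the substance of the argument (Theorem~\ref{thm:mainIX}), and your sketch does not surface it.
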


The novelty in this theorem, compared to the results in \cite{Draisma11d},
is that it suffices to contract along $G$-invariant tensors rather than
general tensors, at the cost of increasing the dimension of those tensors
to be contracted with. While not strictly necessary for our other main
results, Main Theorem III illustrates the general approach taken in this
paper, which is to replace ``baby steps'' for $G=1$ with ``giant steps''
for general Abelian $G$.  Our fourth main theorem, which generalises
our first main theorem, requires a bit more work to formulate.

\begin{de}\label{def:Gspaced}
A \emph{$G$-spaced tree} is a tree $\tre$ together with for each vertex $\ve$ a $G$-module $V_\ve$, a distinguished basis $B_\ve$ of $V_\ve$ such that $G$ acts on $B_\ve$ and a non-degenerate symmetric bilinear form $(. | .)_\ve$ defined by the property that $B_\ve$ is an orthonormal basis with respect to $(. | .)$. For vertices $\ve,\vet$, we say $\ve \sim \vet$ if and only if $(\ve,\vet)$ is an edge of $\tre$. We denote by $\ver(\tre)$, $\Int(\tre)$, respectively $\lea(\tre)$, the set of vertices, internal vertices, respectively leaves, of $\tre$. We define
\[ L(\tre) := \bigotimes_{\ve \in \lea(\tre)}V_\ve\quad \text{
and } \quad R(\tre) := \bigotimes_{\ve \in
\ver(\tre)}V_\ve^{\otimes \{\vet \sim \ve\}}. \]
Let $\tre$ be a $G$-spaced tree. A \emph{$G$-representation} of $\tre$ is a collection $(A_{\vet\ve})_{\vet \sim \ve}$ of $G$-invariant elements of $V_\vet \otimes V_\ve$ such that for any $\vet \sim \ve$, the tensor $A_{\vet\ve}$ maps to $A_{\ve\vet}$ via the natural isomorphism $V_\vet \otimes V_\ve \to V_\ve \otimes V_\vet$. The set of $G$-representations of $\tre$ is denoted $\rep(\tre)$.
\end{de}

Note that in the set-up of the introduction, each vertex of
the tree has the same space attached; in other words, there
is some $G$-representation $V$ with some fixed basis $B$,
some fixed symmetric bilinear form $(. | . )$ (and some
fixed action of $G$) such that $V_\ve = V$, $B_\ve = B$ and
$(. | . )_\ve = (. | .)$ for any vertex $\ve$ of the tree.
%While the field $\RR$ may not satisfy the conditions on our
%field $K$, the field $\CC$ does.
In this setting, we can
view a probability distribution $P \in \CC^{B^{\lea(\tre)}}$
as an element of $L(\tre)$; namely, we can identify $P$ with
\[ \sum_{\bb = (b_\ve)_{\ve \in \lea(\tre)} \in
B^{\lea(\tre)}}P(\bb)\cdot \bigotimes_{\ve \in
\lea(\tre)}b_\ve. \] This is the tensorification of the set-up
of the introduction. For our purposes, we will need to use
the more flexible setting of Definition~\ref{def:Gspaced},
as we will want to apply theorems proved in
\cite{Draisma07b}. Usually however, it will suffice to
consider trees for which each vertex has the same space
attached; see for example Lemma~\ref{lm:LeafSame}.

There is a canonical isomorphism $\rep(\tre) \to R(\tre)$,
defined by the embedding of elements in the tensor product
of the $V_\vet \otimes V_\ve$ ranging over the unordened
pairs of edges $\{\vet \sim \ve, \ve \sim \vet\}$  into
$R(\tre)$. We denote by $\Psi$ (or sometimes $\Psi_{\tre}$
to indicate which tree we are talking about) the composition
of this map with the contraction $R(\tre) \to L(\tre)$ along
the ($G$-invariant) tensor $\bigotimes_{\vet \in \Int(\tre)}\sum_{b \in B_\vet}(b\mid . \hspace{0.1 cm})^{\otimes \{\ve \sim \vet\}}$.

\begin{de} The \emph{equivarant model} $\eqm(\tre)$ associated to a tree
$\tre$ is the Zariski closure $\overline{\Psi(\rep(\tre))}$
of the image of $\Psi$.
\end{de}

Note the slight discrepancy with the introduction, where the term equivariant model was
used for the image of $\Psi$ on stochastically meaningful
parameters. But the present definition is the one used in
\cite{Draisma07b}, from which we will use some results.
While there the group $G$ was allowed to be arbitrary, we
stress once again that in the present paper we only consider
{\em Abelian} $G$.
We can now state our fourth main theorem.

\begin{thm}[Main Theorem IV] If $K$ is algebraically closed and of characteristic zero, then for all $k \in \NN$, there exists a $\degr \in \NN$ such that for each $G$-spaced tree $\tre$ such that $|B_{\ve}| \leq k$ for each $\ve \in \Int(\tre)$, the variety $\eqm(\tre)$ is defined by the vanishing of a number of polynomials of degree at most $\degr$.
\end{thm}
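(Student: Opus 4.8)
The plan is to reduce Main Theorem IV to Main Theorem III by the standard "tree models are built from star models" mechanism, and then to supply the uniform degree bound for star models from Main Theorem III together with the finiteness of the flattening variety under its symmetries. First I would invoke the results from \cite{Draisma07b} (and \cite{Allman04,Casanellas05,Sturmfels05b}) that express the ideal of $\eqm(\tre)$ in terms of the ideals of the equivariant \emph{star} models at the internal vertices of $\tre$, glued along edges. Concretely, fixing a bound $k$ on $|B_\ve|$ for internal vertices $\ve$, one has a decomposition of $\eqm(\tre)$ whereby a leaf-tensor $\omega \in L(\tre)$ lies in $\eqm(\tre)$ if and only if, after appropriate flattenings coming from the edges of $\tre$, certain contractions of $\omega$ land in equivariant star models whose central space has dimension at most $k$. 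The content of this reduction is that the degree of the defining equations of $\eqm(\tre)$ is controlled by (a) the degrees of the edge-flattening (determinantal) equations, which are bounded by $k+1$, independent of $\tre$, and (b) the degrees of the defining equations of the star models $\eqm(\tre_\ve)$, where $\tre_\ve$ is the star with centre $\ve$. So everything comes down to a uniform degree bound for equivariant star models with central dimension $\le k$.

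Next I would translate that star-model question into border rank. For a star $\tre$ with centre $c$, leaves $0,\dots,\mm-1$, and central space $V_c$ with $\dim V_c \le k$, the map $\Psi$ realizes $\eqm(\tre)$ as the set of tensors $\omega \in V_0 \otimes \cdots \otimes V_{\mm-1}$ obtainable (in the closure) by contracting a rank-$\le k$ tensor in $V_0 \otimes \cdots \otimes V_{\mm-1} \otimes (\text{stuff at } c)$ against the diagonal at $c$; more precisely, by the analysis in Section~\ref{sec:Tensors}, membership in the equivariant star model is equivalent to $\omega$ having border rank at most $k$ after we organize the $G$-isotypic components correctly, or can be reduced to a bounded number of such border-rank conditions. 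Here is where Main Theorem III enters: it says that for the fixed $k$ there is an $\M$ so that for $\mm > \M$, border rank $\le k$ of $\omega \in V_{[\mm]}$ is detected by the border ranks of all contractions of $\omega$ in $\mm - \ms$ factors ($\ms \le \M$) along $G$-invariant tensors. Since each such contracted tensor lives in a product of at most $\M$ of the $V_i$, and the $V_i$ are $G$-modules over $K$ with $\dim V_i$ bounded (for the star model the leaf spaces are copies of a fixed $V$, or in the $G$-spaced setting bounded by the combinatorics), the border-rank-$\le k$ locus in each such bounded tensor space is a fixed algebraic variety, hence defined by polynomials of some degree $\le \degr_0(k,\M)$ depending only on $k$ (and $G$, $B$). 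The contractions themselves are, for fixed $\ms$, linear in $\omega$ — they are parametrized by $G$-invariant tensors $\ten \in \bigotimes V_i^*$, and it suffices to let $\ten$ range over a basis of the $G$-invariants, a finite set. Pulling back the equations of the bounded-tensor border-rank variety along these linear contraction maps yields equations for $\eqm(\tre_c)$ of degree at most $\degr_0(k,\M)$, uniformly in $\mm$. For $\mm \le \M$ there are only finitely many star shapes (given the bounded leaf-space dimensions), each defining a fixed variety, so we may enlarge $\degr_0$ to absorb those.

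Combining: set $\degr(k) := \max(k+1,\ \degr_0(k,\M))$, possibly enlarged by a constant accounting for how edge-flattenings compose the star-model equations in the gluing. Then for every $G$-spaced tree $\tre$ with $|B_\ve| \le k$ on internal vertices, $\eqm(\tre)$ is cut out by polynomials of degree $\le \degr(k)$. The main obstacle — and the real work done elsewhere in the paper — is Main Theorem III itself, i.e.\ that finitely many bounded-arity contraction conditions suffice to certify border rank $\le k$ uniformly in $\mm$; this rests on constructing the infinite star model inside a flattening variety and showing that variety is Noetherian under its symmetry group (Sections~\ref{sec:Infinite}--\ref{sec:EqNoeth}). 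A secondary technical point is making the reduction from equivariant star models to border rank genuinely uniform: one must check that the $G$-isotypic bookkeeping (the choice of $\n$, the congruence classes modulo $\n$) does not reintroduce $\mm$-dependence in the degree, which is exactly why the careful choice of $\n > 1$ in Section~\ref{sec:Tensors} is needed rather than the $\n = 1$ of \cite{Draisma11d}. Finally, one should verify the hypothesis that $K$ is algebraically closed of characteristic zero is used only to ensure every $G$-representation decomposes into one-dimensional irreducibles (so the flattening/contraction analysis applies) and to pass between the model and its Zariski closure freely.
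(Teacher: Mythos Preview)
Your reduction from arbitrary trees to stars via \cite{Draisma07b} is correct and matches the paper. The gap is in the second step: you assert that ``membership in the equivariant star model is equivalent to $\omega$ having border rank at most $k$'' (or can be reduced to finitely many such conditions), and then invoke Main Theorem~III. This is false in general. For nontrivial $G$, the equivariant star model $\eqm_\mm$ is a \emph{proper} closed $H_\mm$-stable subvariety of the border-rank-$\leq k$ locus: the transition tensors $A_{\ro\ve}$ are required to be $G$-invariant, which cuts down the image of $\Psi$. Main Theorem~III tells you only that $\omega$ has border rank $\leq k$; it says nothing about the extra equations singling out $\eqm_\mm$ inside that locus. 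So your argument would prove the bound for the flattening variety $\Yp{[\mm]}$ but not for $\eqm_\mm$.

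The paper does not route through Main Theorem~III at all. Instead, it proves a general statement (Theorem~\ref{thm:mainVIII}): for \emph{any} compatible family of $H_\mm$-stable closed subsets $Z_\mm \subseteq \Yp{[\mm]}$ admitting a section $\iota$ against $\con_0$, the $Z_\mm$ are defined in bounded degree. This follows from the $\Ginf$-Noetherianity of $\Yinf$ (Theorem~\ref{thm:YEqNoether}), since the projective limit $\Zinf$ is then a closed $\Ginf$-stable subset and hence cut out by finitely many $\Ginf$-orbits of equations. The star models $\eqm_\mm$ are then checked to form such a family (contractions along $G$-invariant tensors preserve them, and tensoring with a $G$-invariant vector $e_0$ gives the section), so Theorem~\ref{thm:mainVIII} applies directly. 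A separate lemma (Lemma~\ref{lm:LeafSame}) handles the reduction to fixed leaf spaces $V=K[G]^n$; your ``bounded by the combinatorics'' is not enough, since the leaf spaces in a $G$-spaced tree are a priori unbounded.
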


The bound $\degr$ will certainly have to depend on $k$. For instance, if
$G$ is the trivial group, and $\tre$ is a star tree, then the variety
$\eqm(\tre)$ is the variety of tensors of rank at most $k$, and no
polynomials of degree less than $k+1$ vanish on this variety.
Main Theorem I is a direct corollary of this theorem; the
details for passing from the case of unrooted
trees without the restriction that row sums of transition
matrices are $1$ to the case of rooted trees with that
additional restriction can be found in Section 3 of
\cite{Draisma07b}.

\section{Tensors and flattening} \label{sec:Tensors}
In the proofs of our main theorems, in addition to
contractions, we will use a second operation
on tensors, namely, {\em flattening}. Suppose that $I,J$ form a
partition of $[\mm]$ into two parts. Then there is a natural isomorphism
$\flat=\flat_{I,J}: V_{[\mm]} \to V_I \otimes V_J$. The image $\flat
\omega$ is a $2$-tensor called a flattening of $\omega$. Its rank (as
a $2$-tensor) is a lower bound on the border rank of $\omega$.
The first step in our proof below is a reduction to the case where all
$V_i$ are isomorphic as $G$-representations. Here, $i$ can either be viewed as an element of $[\mm]$ (in Main Theorem III) or as an element of $\lea(\tre)$ (in Main Theorem IV).

We have the following lemma, in which $K[G]$ stands for the regular representation of $G$.

\begin{lm} \label{lm:AllSame}
Let $\mm,k,n$ be natural numbers with $n \geq k+1$, and let $V_0,\ldots,V_{\mm-1}$
be $G$-representations over $K$. Then a tensor $\omega \in V_{[\mm]}$ has rank (respectively, border rank) at most $k$ if and only
if for all $\mm$-tuples of $G$-linear maps $\phi_i:V_i \to K[G]^n$ the tensor
$\phi_{[\mm]}(\omega)$ has rank (respectively, border
rank) at most $k$.

Moreover, if $\omega \in V_{[\mm]}$ has border rank at most
$k$, then there exist $G$-linear maps $\phi_i: V_i
\to K[G]^k$ and $\psi_i: K[G]^k \to V_i \
(i=1,\ldots,\mm)$,
such that $\psi_{[\mm]}(\phi_{[\mm]}(\omega)) = \omega$.
\end{lm}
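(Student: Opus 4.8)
The plan is to reduce both directions to the finite-dimensional representation theory of the Abelian group $G$, using that over $K$ every $G$-module splits as a sum of one-dimensional characters. Write $\widehat{G}$ for the group of characters; for a $G$-representation $W$ and $\chi \in \widehat G$, let $W_\chi$ denote the $\chi$-isotypic component, so $W = \bigoplus_\chi W_\chi$. The regular representation satisfies $K[G] = \bigoplus_{\chi \in \widehat G} L_\chi$ with each $L_\chi$ one-dimensional, hence $K[G]^n = \bigoplus_\chi L_\chi^n$ contains a copy of $L_\chi^{\min(n,\dim W_\chi)}$ onto which one can project any isotypic component $W_\chi$ of dimension $\le n$. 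The key elementary observation is: a $G$-linear map $\phi : W \to K[G]^n$ is the same datum as a tuple of ordinary linear maps $\phi_\chi : W_\chi \to L_\chi^n \cong K^n$, one for each character, with no compatibility constraints between different $\chi$.

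First I would prove the ``only if'' directions, which are immediate: if $\omega$ has rank (resp.\ border rank) at most $k$, then so does $\phi_{[\mm]}(\omega)$ for any tuple of (not necessarily $G$-linear) maps $\phi_i$, by the inequality $\rk \phi_{[\mm]}\omega \le \rk\omega$ recalled in the excerpt, and likewise for border rank. For the ``if'' direction of the first assertion, suppose $\phi_{[\mm]}(\omega)$ has rank (resp.\ border rank) $\le k$ for \emph{all} tuples of $G$-linear maps $\phi_i : V_i \to K[G]^n$. I would produce a single such tuple that is, on the relevant subspace, a split injection. Concretely: since $\omega \in V_{[\mm]}$ has rank or border rank $\le k$ only needs to be \emph{detected}, fix attention on the (at most $k$-dimensional, in each tensor factor) subspaces actually involved. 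The crucial point is that a tensor of border rank $\le k$ in $V_{[\mm]}$ lies in $(V_0')\otimes\cdots\otimes(V_{\mm-1}')$ for $G$-submodules $V_i' \subseteq V_i$ with $\dim V_i' \le k$ --- this is because the set of tensors lying in some product of $k$-dimensional coordinate subspaces is Zariski closed and contains all rank-$\le k$ tensors, and one can take the $V_i'$ to be $G$-stable by replacing each by the $G$-submodule it generates, which does not increase dimension beyond $k$ \emph{as long as we first decompose into characters}: $\dim (V_i')_\chi \le k$ for every $\chi$. Wait --- here lies the subtlety, and this is the step I expect to be the main obstacle: a priori the $G$-submodule generated by a $k$-dimensional subspace could be larger. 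The resolution is that $\omega$, being $G$-\emph{arbitrary} but of low rank, decomposes along the character grading $V_{[\mm]} = \bigoplus_{\chi_0,\ldots,\chi_{\mm-1}} (V_0)_{\chi_0}\otimes\cdots$, and its rank is the sum of ranks of its graded pieces; in each graded piece the relevant subspace of $V_i$ is contained in the single isotypic component $(V_i)_{\chi_i}$. So in fact for each $i$ and each character $\chi$ appearing, $\omega$ only uses a subspace of $(V_i)_\chi$ of dimension $\le k \le n-1 < n$, hence there is a $G$-linear $\phi_i : V_i \to K[G]^n$ restricting to an injection on each such subspace with a $G$-linear splitting $\psi_i : K[G]^n \to V_i$ (built character-by-character from ordinary linear algebra in $K^n$). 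Then $\psi_{[\mm]} \circ \phi_{[\mm]}$ is the identity on the subspace containing $\omega$, so $\omega = \psi_{[\mm]}(\phi_{[\mm]}(\omega))$ has the same rank, resp.\ border rank, as $\phi_{[\mm]}(\omega) \le k$.

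For the final ``moreover'' assertion I would run the same construction but, since we now \emph{assume} $\omega$ has border rank $\le k$, take target $K[G]^k$ instead of $K[G]^n$: the border-rank-$\le k$ hypothesis guarantees, by the closedness argument above applied character-wise, that in each character $\chi$ the relevant subspace of each $(V_i)_\chi$ has dimension $\le k$, so it embeds $G$-linearly into $L_\chi^k$, i.e.\ $V_i$ admits a $G$-linear map $\phi_i$ to $K[G]^k$ injective on that subspace, with $G$-linear splitting $\psi_i$; then $\psi_{[\mm]}(\phi_{[\mm]}(\omega)) = \omega$. The one thing to verify carefully here is that the subspaces $V_i'$ can indeed be chosen $G$-stable \emph{and} of dimension $\le k$ simultaneously, which is exactly the character-grading argument: decompose $\omega = \sum_{\underline\chi} \omega_{\underline\chi}$; for border rank, the graded components of a border-rank-$\le k$ tensor need not individually have border rank $\le k$, so instead I would argue directly that the Zariski-closed set $\{\text{border rank} \le k\}$ is stable under the $G$-action on $V_{[\mm]}$ and under all coordinate projections, and use that a border-rank-$\le k$ tensor lies in $\overline{\{\rk \le k\}}$, each member of $\{\rk\le k\}$ visibly lying in a product of $G$-stable $\le k$-dimensional subspaces after replacing coordinate subspaces by the $G$-modules they generate within the ambient isotypic decomposition --- and closedness of ``lies in some product of $G$-stable $\le k$-dimensional subspaces'' (a finite union over choices of dimension vectors in each Grassmannian of $G$-submodules) then passes to the closure. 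This is the technical heart; the rest is bookkeeping with the character decomposition.
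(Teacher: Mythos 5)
Your ``only if'' direction and the overall strategy for the ``moreover'' part are sound, but your ``if'' direction has a real gap, and along the way you also make a false auxiliary claim.

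\textbf{The gap in the ``if'' direction.} You set out to build a single tuple $\phi$ with a $G$-linear splitting $\psi$ so that $\psi_{[\mm]}\phi_{[\mm]}(\omega)=\omega$, and deduce $\rk\omega\le\rk\phi_{[\mm]}(\omega)\le k$. To build such a split injection you need the flattening images $W_i$ (image of $\omega$ as a map $V_{[\mm]-\{i\}}^*\to V_i$), or rather the isotypic multiplicities of $K[G]W_i$, to be at most $n$ so that they embed $G$-equivariantly into $K[G]^n$. At the point where you assert ``$\omega$ only uses a subspace of $(V_i)_\chi$ of dimension $\le k\le n-1$'' you are implicitly assuming $\omega$ has small (border) rank --- which is exactly the conclusion, not a hypothesis. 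In the ``if'' direction the only hypothesis is that $\phi_{[\mm]}(\omega)$ has small rank for \emph{all} $\phi$; a priori the $W_i$ can be large. The paper argues by contrapositive and splits into two cases: if every isotypic multiplicity of $K[G]U_0$ is $\le n$ it builds the split injection as you do; if some isotypic multiplicity exceeds $n$, it instead chooses $\phi_0$ to surject that isotypic component onto the full $\chi$-component of $K[G]^n$, forcing the flattening $\flat_{\{0\},[\mm]-\{0\}}$ of $\phi_{[\mm]}(\omega)$ to have rank $\ge n>k$, which contradicts the hypothesis. Your argument never treats this second case, and without it the ``if'' direction does not close. (This is also where the standing hypothesis $n\ge k+1$ is actually used; your write-up never needs it, which is a sign something is missing.)

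\textbf{A false auxiliary claim.} You justify that the relevant subspace inside each $(V_i)_\chi$ is small by saying that, decomposing $\omega=\sum_{\underline\chi}\omega_{\underline\chi}$ over the character multi-grading, ``its rank is the sum of ranks of its graded pieces.'' That is false: already $e\otimes e\in K[\ZZ/2\ZZ]^{\otimes 2}$ is rank one but has four nonzero rank-one graded pieces, so the sum of graded ranks is $4$. The correct (and simpler) observation, which is what the paper uses, is that for a $G$-submodule $U'=K[G]U$ one has $(U')_\chi=\pi_\chi(U)$, hence $\dim(U')_\chi\le\dim U$; in particular if the flattening image $U$ has dimension $\le k$ then every isotypic multiplicity of $K[G]U$ is $\le k$. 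With this in hand your ``moreover'' argument (where border rank $\le k$ \emph{is} assumed, so the flattening images have dimension $\le k$ by closedness) does go through with target $K[G]^k$, matching the paper.
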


This lemma holds at the scheme-theoretical level, but we will not need
that. For $G$ the trivial group, the lemma reduces to \cite[Theorem
11]{Allman04}.

\begin{proof}
The ``only if'' part follows from the fact that $\phi_{[\mm]}$ does
not increase rank or border rank. For the ``if'' part assume that
$\omega$ has rank strictly larger than $k$, and we argue that there
exist $\phi_0,\ldots,\phi_{\mm-1}$ such that $\phi_{[\mm]}(\omega)$ still has
rank larger than $k$. It suffices to show how to find $\phi_0$; the
remaining $\phi_i$ are found in the same manner. Let $U_0$ be the image
of $\omega$ regarded as a linear map from the dual space $V_{[\mm] - \{0\}}^*$
to $V_0$. Set
\[ U_0' := K[G]U_0 = \left\{\sum_{g \in G}c_ggu: c_g
\in K, u \in U_0\right\}. \]
For each irreducible $G$-representation $\chi$, let $k_{\chi}$ be the multiplicity of $\chi$ in $U_0'$. If $k_{\chi}$
is at most $n$ for each $\chi$, then by elementary linear
algebra and the fact that $K[G]$ is the sum of all irreducible representations of $G$ there exist $G$-linear maps $\phi_0:V_0 \to K[G]^n$ and $\psi_0:K[G]^n \to V_0$ such that $\psi_0 \circ \phi_0$ is the identity map on $U_0'$, and hence on $U_0$. Set
$\omega':= (\phi_0 \otimes (\bigotimes_{i>0} \id_{V_i}))(\omega)$, so that by construction $\omega$ itself equals $(\psi_0 \otimes (\bigotimes_{i>0}
\id_{V_i}))(\omega')$. By the discussion above, we have the inequalities
$\rk \omega \geq \rk \omega' \geq \rk \omega$, so that both ranks are
equal and larger than $k$, and we are done. If, on the other hand, there is $\chi$ such that $k_{\chi} > n$, then let $\phi_0:V_0 \to K[G]^n$ be any
$G$-linear map that maps the $\chi$-component of $U_0'$ surjectively onto the $\chi$-component of $K[G]^n$ for each $\chi$ with $k_{\chi} > n$. Then the image of $U_0$ must have rank at least $n$. Defining $\omega'$ as before, we find that the image of $\omega'$ regarded as a linear map $V_{[\mm]-\{0\}}^* \to K[G]^n$ has rank at least $n$. In other words, the flattening
$\flat_{\{0\},[\mm]-\{0\}} \omega'$ has rank at least $n>k$. This implies that $\omega'$ itself has rank larger than $k$. A similar argument applies to border rank.

For the second part, suppose $\omega$ has border rank at most $k$. Note that $\omega$ viewed as a linear map from $V_{[\mm]-\{0\}}^*$ to $V_0$ has rank at most $k$ (since this is a closed condition that is satisfied by all tensors of rank at most $k$). Then as above, one finds there are $\phi_0$, $\psi_0$ such that $\omega$ equals $(\psi_0 \otimes (\bigotimes_{i>0}
\id_{V_i})) ((\phi_0 \otimes (\bigotimes_{i>0} \id_{V_\mm}))(\omega))$; the second part follows by repeatedly applying this.
\end{proof}

\begin{re}
Note that if $\omega$ is $G$-invariant, then all $U_i$ will be $G$-stable and hence $U_i = U_i'$.

Moreover, note that we can refine Lemma~\ref{lm:AllSame} in the following way: an element $\omega$ of $V_{[\mm]}$ has (border) rank at most $k$ if and only if there are $\mm$-tuples of $G$-linear maps $\phi_i: V_i \to K[G]^k$ and $\psi_i: K[G]^k \to V_i$ such that $\psi_{[\mm]}(\phi_{[\mm]}(\omega)) = \omega$ and such that $\phi_{[\mm]}(\omega)$ has (border) rank at most $k$.

Observe that finding $\mm$-tuples of $G$-linear maps as
required (or finding that such $\mm$-tuples do not exist) is
easily done by linear algebra. In essence, this means that the problem of finding whether the (border) rank of a tensor in some tensor product exceeds $k$ be reduced to the problem of finding whether the (border) rank of a tensor in the $\mm$-fold tensor product of the space $V = K[G]^k$ exceeds $k$.
\end{re}

\begin{ex}
Consider the group $G = \ZZ/2\ZZ = \{e,g\}$ and the $8$-dimensional
$G$-module $V_0 = V_1 = K[G]^{\otimes [3]}$. Use shorthand notation
such as $[eeg]:=e \otimes e \otimes g \in V_0$. The tensor
\[ \omega:=[eee] \otimes [eee] + [ggg] \otimes [eeg]  \in V_0 \otimes V_1 \]
has rank equal to $2$. It can be regarded as a linear map from $V_1^*$
to $V_0$, and as such it has image $U_0:=\langle [eee],[ggg] \rangle$. This
subspace is already $G$-stable, so that
\[ U_0'=K[G]U_0=\langle [eee]+[ggg], [eee]-[ggg] \rangle, \]
where the two latter vectors correspond to the two different characters
of $G$. Define $\phi_0: V_0 \to K[G]^2$ by $[eee] \mapsto (e,0)$,
$[ggg] \mapsto (g,0)$ and by sending all other three-letter words
over $G$ to zero. This map is $G$-equivariant. Conversely, define
$\psi_0:K[G]^2 \to V_0$ by $\psi_0(e,0)=[eee]$, $\psi_0(g,0)=[ggg]$ and
$\psi_0(0,K[G])=\{0\}$. This $\psi_0$ is $G$-equivariant.  We
used only one copy of $K[G]$ as both characters have multiplicity one
in $U_0'$.

Next, consider $\omega$ as a linear map from $V_0^*$ to $V_1$, and let
$U_1=\langle [eee],[eeg] \rangle$ be the image of that linear map. We find
\[ U_1'=K[G]U_1=\langle [eee]+[ggg], [eee]-[ggg], [eeg]+[gge],
[eeg]-[gge] \rangle. \]
Each character has multiplicity two in $U_1'$, and we will
need the second factor $K[G]$. Define $\phi_1:V_1 \to K[G]^2$ by
\[
 [eee] \mapsto (e,0),\ [ggg] \mapsto (g,0),\ [eeg] \mapsto (0,e),\
[gge] \mapsto (0,g) \]
and by mapping all other words to zero. This map is $G$-equivariant
and surjective. Let $\psi_1:K[G]^2 \to V_1$ be the unique map such that
$\psi_1 \circ \phi_1$ restricts to the identity on $U_1'$.
Now we find that
\[ \psi_{[2]}(\phi_{[2]} \omega)
= (\psi_0 \otimes \psi_1)(\phi_0 \otimes
\phi_1)\omega=\omega \]
as stated in the lemma.
\end{ex}

Let $V$ be a $G$-representation.
Let $y_0,\ldots,y_{\di-1}$ be a basis of $V^*$.
Let $\mm \in \NN$ and denote by $\coord_{\mm}$ the
coordinate ring of the affine space $V^{\otimes [\mm]}$. Let
$\wo = (\wo_0,\ldots,\wo_{\mm-1})$ be an element of
$[\di]^{\mm}$, i.e., a word over the alphabet $[\di]$ of
length $\mm$. Then $\coord_{\mm}$ can be viewed as the
polynomial ring in the coordinates $\ten_{\wo} = \otimes_{i \in [\mm]}y_{\wo_i}$.

Several groups act naturally on $V^{\otimes [\mm]}$ in a $G$-equivariant way. First of all, denoting by $\GL_G(V)$ the group of invertible $G$-equivariant automorphisms of $V$, observe that $\GL_G(V)^{\mm}$ acts linearly on $V^{\otimes [\mm]}$ by
\[ (\phi_0,\ldots,\phi_{\mm-1})(\vect_0 \otimes \cdots \otimes \vect_{\mm-1})=
(\phi_0 \vect_0 \otimes \cdots \otimes \phi_{\mm-1} \vect_{\mm-1}), \]
and this action gives a right action on $(V^*)^{\otimes [\mm]}$ by
\[ (z_0 \otimes \cdots \otimes z_{\mm-1})(\phi_0,\ldots,\phi_{\mm-1})=
((z_0 \circ \phi_0) \otimes \cdots \otimes (z_{\mm-1} \circ \phi_{\mm-1})). \]

Second, the group $S_{\mm}$ of permutations of $[\mm]$ acts by
\[ \pi (\vect_0 \otimes \cdots \otimes \vect_{\mm-1}) =
\vect_{\pi^{-1}(0)} \otimes \cdots \otimes \vect_{\pi^{-1}(\mm-1)}. \]
This leads to the contragredient action of $S_\mm$ on the dual space
$(V^*)^{\otimes [\mm]}$ by
\[ \pi(z_0 \otimes \cdots \otimes z_{\mm-1}) =
z_{\pi^{-1}(0)} \otimes \cdots \otimes z_{\pi^{-1}(\mm-1)}. \]
Both of these extend to an action on all of $\coord_\mm$ by means of algebra automorphisms. Denote by $\G_{\mm}$ the group generated by $S_\mm$ and $\GL_G(V)^\mm$ in their representations on $V^{\otimes [\mm]}$.

Let $k \in \NN$. Given any partition of $[\mm]$ into $I,J$ we have the flattening $V^{\otimes [\mm]} \to V^{\otimes I} \otimes V^{\otimes J}$. Composing this flattening with a $(k+1) \times (k+1)$-subdeterminant of the resulting two-tensor gives a degree-$(k+1)$ polynomial in $\coord_\mm$. The linear span of all these equations for all possible partitions $I,J$ is an $\G_\mm$-submodule of $\coord_\mm$. Let $\Yp{[\mm]}$ (or more generally $\Yp{I'}$ for a finite set $I'$) denote the subvariety of $V^{\otimes [\mm]}$ (or more generally $V^{\otimes I'}$) defined by this submodule. This is an $\G_\mm$-stable variety, which will be very useful later on. Note that any contraction from $V^{\otimes [\mm]} \to V^{\otimes [\mm]-I}$ maps $\Yp{[\mm]}$ to $\Yp{[\mm]-I}$.

The following convention will be used in the remainder of this paper. Let
$\mm \in \NN$ and let $n \in [\mm]$. If $\ten \in (V^*)^{\otimes n}$, then
when we speak of the contraction from $V^{\otimes [\mm]} \to V^{\otimes
[\mm-n]}$ along $\ten$, we mean the contraction along the tensor $\ten$
viewed as an element of $(V^*)^{\otimes [\mm]-[\mm-n]}$ in the natural
way; abusing notation, we will usually denote this contraction by $\con$.
We can now state the following crucial lemma.

\begin{lem}\label{lem:ContractionPP_0}
Let $V$ be a $G$-representation. Then there an exists $\n \in \NNp$ and a
$G$-invariant tensor $\ten_0 \in (V^*)^{\otimes \n}$ such that for all
$k \in \NN$ and $m \gg k$, a tensor
$\omega\in  V^{\otimes [\mm]}$ lies in $\Yp{[\mm]}$ if (and only if)
$\con(\sigma(\omega))$ lies in $\Yp{[\mm-\n]}$
for all $\sigma \in S_\mm$ and for all $G$-equivariant contractions
$V^{\otimes [\mm]} \to V^{\otimes [\mm-\n]}$ along a tensor $\ten$ of
the form $\phi(\ten_0)$ with $\phi \in \GL_G(V)^{\n}$.
\end{lem}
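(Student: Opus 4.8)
The statement says: if $\omega \in V^{\otimes[\mm]}$ has all its "contractions by transformed $\ten_0$" landing in the flattening variety $\Yp{[\mm-\n]}$, then $\omega$ itself is in $\Yp{[\mm]}$. So I need to produce, for an arbitrary flattening partition $[\mm] = I \sqcup J$ and an arbitrary $(k+1)\times(k+1)$ minor of $\flat_{I,J}\omega$, a way to certify that this minor vanishes. The idea: show that any such minor can be obtained, up to a constant, as a minor of a flattening of $\con(\sigma(\omega))$ for a suitable contraction $\con$ along $\phi(\ten_0)$ and a suitable permutation $\sigma$. Since by hypothesis $\con(\sigma(\omega)) \in \Yp{[\mm-\n]}$, that minor vanishes, and we are done.

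Let me think about what $\ten_0$ should be. The $(k+1)\times(k+1)$ minor of $\flat_{I,J}\omega$ involves only $k+1$ "row" coordinates from $V^{\otimes I}$ and $k+1$ "column" coordinates from $V^{\otimes J}$. A single monomial $\xi_w = \otimes_{i\in[\mm]} y_{w_i}$ only "sees" finitely many slots, and the minor in question only involves, say, coordinates supported on a bounded set of letters in bounded positions. The natural move: pick $\ten_0$ to be a $G$-invariant tensor in $(V^*)^{\otimes \n}$ whose contraction, applied to a pure tensor $v_1\otimes\cdots\otimes v_\n$, acts as "pairing against a fixed vector, repeatedly" — concretely one wants $\con(v_0\otimes\cdots\otimes v_{\mm-1}) = \big(\prod_{j=\mm-\n}^{\mm-1} z(v_j)\big)\, v_0\otimes\cdots\otimes v_{\mm-\n-1}$ for some fixed functional $z$, but $G$-invariance forces $\ten_0$ to be a sum over a $G$-orbit, so $\con$ is a sum of such "evaluate-and-delete" operations. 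The right choice of $\n$ and $\ten_0$ comes from Section~\ref{sec:Tensors}: by Lemma~\ref{lm:AllSame} applied to $V = K[G]^k$, one should take $\n$ large enough (and $\ten_0$ a $G$-invariant tensor whose "flattening" in a suitable sense has rank exceeding $k$ on the relevant space, built from the regular representation) so that contracting by $\phi(\ten_0)$ over the $\GL_G(V)$-orbit is rich enough to realize the needed substitutions. Actually the cleanest route: it suffices to arrange that for each of the finitely many $G$-isomorphism types of $2$-tensors of rank $>k$ we want to detect, some single contraction-and-permutation exhibits that rank.

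The key steps, in order, would be: (1) Fix the bad flattening $\flat_{I,J}$ and a witnessing $(k+1)\times(k+1)$ submatrix, determined by row-index set $R \subseteq [\di]^I$ and column-index set $C \subseteq [\di]^J$ with $|R|=|C|=k+1$. Only finitely many slots of $[\mm]$ are "active" for this submatrix in the sense that $R, C$ restrict to them; call this active set $S$, with $|S| \leq$ some bound depending only on $\di$ and $k$. (2) Choose $\M$ so that $\mm > \M$ guarantees $\mm - |S| \geq \n$ and indeed $\mm$ is large enough that we can permute so that all the "inactive-on-both-sides, to-be-contracted" slots form a contiguous block of size exactly $\n$ at the end; use $\sigma$ to move slots around. (3) On that $\n$-block, choose $\phi \in \GL_G(V)^\n$ so that $\phi(\ten_0)$ contracts the block to the value "$1$" on exactly the monomials appearing in our submatrix and kills everything else — this is the linear-algebra heart, and it is where the specific form of $\ten_0$ (a $G$-orbit sum from the regular representation, able after an automorphism to act as a coordinate projection) is used. (4) Conclude that the corresponding minor of the flattening of $\con(\sigma(\omega))$ — now a tensor in $V^{\otimes[\mm-\n]}$ — equals, up to a nonzero scalar, our original minor; since $\con(\sigma(\omega)) \in \Yp{[\mm-\n]}$ by hypothesis, the minor vanishes. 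As $\flat_{I,J}$ and the submatrix were arbitrary, $\omega \in \Yp{[\mm]}$.

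\textbf{The main obstacle.} The real difficulty is step (3) together with pinning down $\n$ and $\ten_0$ once and for all \emph{before} $k$ (and hence $\M$) is chosen — the quantifier order in the lemma is $\exists \n, \ten_0\ \forall k\ \exists \M$. So $\ten_0$ cannot be tailored to $k$; it must be a single $G$-invariant tensor flexible enough, under the $\GL_G(V)^\n$-action, to perform arbitrarily-many needed coordinate-selections for every $k$. This forces $\n$ and $\ten_0$ to encode the regular representation $K[G]$ in a robust way (so that the contracted functional, after an automorphism, can isolate any prescribed one-dimensional piece while the $G$-orbit-sum structure does not cause unwanted cancellation among the $k+1$ selected monomials). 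Showing that such a universal $\ten_0$ exists — and that the resulting contractions really do reproduce every determinantal equation of $\Yp{[\mm]}$, not just those for one partition — is the crux; I would isolate it as a separate lemma about $2$-tensors over $V = K[G]^k$ building on Lemma~\ref{lm:AllSame}, and expect $\n$ to be forced to grow with $|G|$ but to stay independent of $k$ and $\mm$.
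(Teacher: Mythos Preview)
Your plan attacks individual minors, trying to realise each $(k+1)\times(k+1)$ minor of $\flat_{I,J}\omega$ as (a scalar multiple of) a minor of some $\con(\sigma(\omega))$. The paper instead rephrases the rank condition as a dimension bound on the image subspace $W\subseteq V^{\otimes J}$ of the flattening, and proves a separate subspace lemma (Lemma~\ref{lem:SubspaceContraction}): if every $\con(\sigma(W))$ has dimension $\leq k$, then $\dim W\leq k$. Since contracting $\omega$ in $\n$ of the $J$-factors corresponds exactly to contracting $W$, the hypothesis on $\omega$ becomes the hypothesis of the subspace lemma, and one takes $\M=2\M_1$ so that at least one side of every partition is large enough for it to apply. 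The subspace lemma itself is proved via Borel's fixed point theorem on the Grassmannian: the closed set of offending $W$'s is $\GL_G(V)^{|J|}$-stable and projective, hence contains a torus-fixed point, i.e.\ a subspace spanned by pure tensors $e_{j,0}\otimes\cdots\otimes e_{j,|J|-1}$; a pigeonhole on characters then locates $\n=|G|$ positions where all $k+1$ spanning monomials carry the same character, and the explicit $\ten_0=\sum_{\chi}(\sum_{i:\chi_i=\chi}x_i)^{\otimes\n}$ contracts there without collapsing them.

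Your step~(3) is where the direct approach breaks down. Because $\phi\in\GL_G(V)^{\n}$ and $\ten_0$ is $G$-invariant, $\phi(\ten_0)$ is always $G$-invariant and lives in a single $\GL_G(V)^{\n}$-orbit; it cannot be made to act as an arbitrary ``indicator'' $y_{c_0}\otimes\cdots\otimes y_{c_{\n-1}}$ on the contracted block. More basically, your bounded ``active set'' $S$ does not exist in general: the $k+1$ row-words are full-length words on $I$ and nothing forces them to agree outside a set of size bounded independently of $\mm$, so there is no uniform choice of $\n$ slots on which a single contraction preserves the minor. The Borel reduction is precisely what manufactures a situation---the torus-fixed, monomial $W$---in which a finite pigeonhole does suffice; without it, one cannot control an arbitrary minor by a single contraction of the fixed shape $\phi(\ten_0)$.
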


In this lemma, $\mm \gg k$ means that $\mm>\M$ for some function
$\M=\M(k)$ of $k$, which we will determine below. The lemma follows from
the following lemma about contractions of subspaces of tensor powers.

\begin{lem}\label{lem:SubspaceContraction} Let $V$ be a $G$-representation
and set $n_1:=|G|$. There exists a $G$-invariant tensor $\ten_0 \in
(V^*)^{\otimes n_1}$ such that for all $k \in \NN$ and all $\mm \gg k$
and all subspaces $W \subseteq V^{\otimes [\mm]}$ the following holds:
if the dimension of $\con(\sigma(W))$ is at most $k$ for all $\sigma \in
S_\mm$ and for all tensors $\ten \in (V^*)^{\otimes n_1}$ with $\ten =
\phi(\ten_0)$ for some $\phi \in \GL_G(V)^{n_1}$, then $\dim W$ itself
is at most $k$.
\end{lem}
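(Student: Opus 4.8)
The plan is to pass from the finite stars to an infinite-dimensional limit and invoke equivariant Noetherianity. Concretely, fix $\n$ (to be chosen) and work in the inverse system of the coordinate rings $\coord_\mm$ under the contraction maps, but dualize: think of a subspace $W \subseteq V^{\otimes[\mm]}$ as living inside the infinite tensor space $V^{\otimes\NN}$ (after padding, or better, by working with one of the $\n$ congruence classes of $\mm$ modulo $\n$ as the paper suggests). The key point is that ``$\dim\con(\sigma(W)) \le k$ for all $\sigma$ and all $\ten = \phi(\ten_0)$'' is a closed condition that should cut out, in the limit, exactly the locus where $W$ (or rather its image in the limit) has dimension $\le k$. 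So the real content is: \emph{choose $\ten_0$ so that contracting along the $\GL_G(V)$-orbit of $\ten_0$ on $\n$ tensor factors is ``rich enough'' to detect the rank of a $2$-flattening.} This is where the hypothesis $K[G]$-structure of $V$ matters: after the reduction of Lemma~\ref{lm:AllSame} one may assume $V \cong K[G]^k$, and one wants $\ten_0 \in (V^*)^{\otimes\n}$ whose $\GL_G(V)^{\n}$-orbit, contracted against a generic tensor, recovers each isotypic flattening. Since $G$ is Abelian, $V$ decomposes into one-dimensional characters, and choosing $\n$ at least the number of characters (times enough copies) lets $\ten_0$ ``address'' each character slot; a generic $\phi$ then lets the contraction along $\phi(\ten_0)$ see an arbitrary linear functional on the $\n$ chosen factors, which is what is needed.

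First I would set up the infinite star space $\Tinf$ (the injective/projective limit over $\mm$ in a fixed congruence class mod $\n$, with its $\Ginf = \varinjlim \G_\mm$-action), following Section~\ref{sec:Infinite} of the paper. Given the family of subspaces $W \subseteq V^{\otimes[\mm]}$ for all large $\mm$, or really given a single such $W$, I would show that the hypothesis propagates: contracting $\sigma(W)$ along $\phi(\ten_0)$ and asking the result to have dimension $\le k$ is equivalent, as $\mm$ grows, to a condition on the image of $W$ in $\Tinf$ lying in a certain $\Ginf$-stable subvariety $\Zinf$ — namely the locus of $2$-flattening rank $\le k$, whose finite-level incarnation is the determinantal variety $\Yp{[\mm-\n]}$ pulled back. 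Then I would invoke the equivariant Noetherianity result (Section~\ref{sec:EqNoeth}) to conclude that this infinite determinantal variety is defined by finitely many $\Ginf$-orbits of equations, all of which already appear at some bounded level $\M_1$; hence the hypothesis at level $\mm > \M_1$, which tests precisely these equations on $W$, forces $W$ itself into the flattening variety, i.e. $\dim W \le k$.

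The main obstacle, I expect, is the second sentence of the first paragraph: \emph{verifying that a single fixed tensor $\ten_0$ (with a single fixed number of slots $\n$), together with its $\GL_G(V)$-orbit, suffices to test all the relevant flattening-rank conditions simultaneously for every $\mm$.} One has to argue that contracting along $\{\phi(\ten_0) : \phi \in \GL_G(V)^{\n}\}$ probes, for the growing tensor, every ``coarse'' bipartition up to the symmetry $\Ginf$, so that no flattening-rank information is lost in the limit. This is exactly the step where the paper needs $\n > 1$ in general (as the introduction stresses), in contrast to the $G = \{1\}$ case of \cite{Draisma11d} where $\n = 1$: for nontrivial Abelian $G$ one needs enough slots in $\ten_0$ to separate the characters. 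The remaining steps — the limit construction, the translation of the dimension bound into membership in $\Yp{\cdot}$, and the appeal to equivariant Noetherianity — are then essentially bookkeeping built on the machinery already assembled in the earlier sections. Finally, Lemma~\ref{lem:ContractionPP_0} follows from Lemma~\ref{lem:SubspaceContraction} by applying the latter with $W$ running over the images of $\omega$ under the various flattening maps $V^{\otimes[\mm]} \to V^{\otimes I}\otimes V^{\otimes J}$ regarded as spaces of $2$-tensors, using that a tensor lies in $\Yp{[\mm]}$ iff all its flattenings have rank $\le k$.
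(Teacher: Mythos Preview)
Your proposal has a fundamental circularity problem. You want to build the infinite limit space of Section~\ref{sec:Infinite} and then invoke the equivariant Noetherianity of Section~\ref{sec:EqNoeth} to deduce the lemma. But look at the logical dependencies in the paper: the very definition of the projective limit $\Ainf$ and of $\Yinf$ requires a fixed choice of $\ten_0$, and that choice is exactly what Lemma~\ref{lem:SubspaceContraction} is supposed to produce. More concretely, Proposition~\ref{prop:YFinite} (finitely many orbits of equations cut out $\Yinf$) is proved by repeatedly applying Lemma~\ref{lem:ContractionPP_0}, which in turn is reduced to Lemma~\ref{lem:SubspaceContraction}; and Proposition~\ref{prop:YFinite} is then used in the induction step of Theorem~\ref{thm:YEqNoether}. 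So invoking equivariant Noetherianity to prove Lemma~\ref{lem:SubspaceContraction} is circular. You also conflate two different objects: the lemma is a statement about dimensions of \emph{subspaces} $W \subseteq V^{\otimes[\mm]}$ under contraction, not about membership of a single tensor in the flattening variety $\Yp{[\mm]}$; there is no Grassmannian-type variety sitting inside $\Ainf$ to which the Noetherianity theorem would directly apply.

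The paper's actual proof is entirely elementary and self-contained, with explicit constants. One takes $n_1=|G|$ and the concrete tensor $\ten_0=\sum_{\chi\in\widehat G}\bigl(\sum_{i:\chi_i=\chi}x_i\bigr)^{\otimes n_1}$, and sets $\M_1=k+|G|^{k+2}-|G|^{k+1}$. Assuming for contradiction that some $W$ of dimension $f>k$ satisfies the hypothesis, one passes to the Grassmannian $\Gr(f,V^{\otimes[\mm]})$, observes that the locus $Z(f,k)$ where the hypothesis holds is a nonempty projective $\GL_G(V)^\mm$-stable variety, and applies Borel's fixed point theorem to the diagonal torus $D^\mm$ to get a $D^\mm$-fixed $W$. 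Such a $W$ is spanned by pure tensors in the eigenbasis. A short pigeonhole argument (iterated over characters, which is where the bound $|G|^{k+2}-|G|^{k+1}$ comes from) then locates $n_1$ positions on which, for each of $k+1$ chosen basis tensors, all factors carry the same character; contracting those positions against $\ten_0$ sends each of the $k+1$ tensors to a nonzero tensor, and the remaining positions keep them linearly independent. This contradicts $\dim\ten_0(\sigma(W))\le k$. No limit construction and no Noetherianity is needed; indeed, this lemma is the seed that makes those later arguments possible.
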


Again, $\mm \gg k$ means that $\mm>\M_1$ for some function $\M_1=\M_1(k)$
of $k$, which we will determine below.  To prove this lemma, we will
make use of the following combinatorial lemma concerning words over a
finite alphabet.

\begin{lem}\label{lem:combwords} Let $k,l \in \NN$ and let $A$ be a
finite alphabet. Let $w_0,\ldots,w_{k} \in A^{[l]}$ be words of length
$l$ over $A$, written down as a $[k+1] \times [l]$-array of letters
from $A$. For $\mathbf{a} \in A^{[k]}$ write
\[
J_{\mathbf{a}}:= \{j \in [l]: \forall i \in [k+1]:
(w_i)_j = a_i\} \]
for the set of positions $j$ where the array has column $\mathbf{a}$,
and for $J \subseteq [l]$ write $(w_i)_J \in A^J$ for the restriction
of the word $w_i$ to the positions in $J$. The following two
statements hold.

\begin{description}
\item[1] There exists an $\mathbf{a} \in A^{[k+1]}$ for
which $|J_{\mathbf{a}}| \geq \lceil \frac{l}{|A|^{k+1}}\rceil$.

\item[2] If $w_0,\ldots,w_{k}$ are pairwise distinct, then
there exists a subset $J \subseteq [l]$ of cardinality at most $k$ such that
$(w_0)_J,\ldots,(w_k)_J$ are pairwise distinct.
\end{description}
\end{lem}

\begin{proof} The first statement follows from immediately from
$\sum_{\mathbf{a} \in A^{[k+1]}}|J_{\mathbf{a}}| = l$.  The second
statement is proved by induction. It is clearly true for $k = 0$,
with $J=\emptyset$. Suppose $k > 0$. By induction, we may assume that
there is $J' \subseteq [l]$ of cardinality at most $k-1$ such that
$(w_0)_{J'},\ldots,(w_{k-1})_{J'}$ are pairwise distinct. In particular,
$(w_k)_{J'}$ can be equal to at most one $(w_i)_{J'}$ with $i<k$. If
it is not equal to any of these, then take $J = J'$. If it is equal to
some $(w_i)_{J'},\ i<k$, then take $j \in [l]$ such that $(w_k)_j \neq
(w_i)_j$ for this $i$ and take $J := J'\cup\{j\}$.
\end{proof}

\begin{proof}[Proof of Lemma~\ref{lem:SubspaceContraction}]
Let $\widehat{G}$ be the group of characters of $G$. Note that we have $V = \bigoplus_{\chi \in \widehat{G}}V_{\chi}$ where $V_{\chi} = \{v \in V: \forall g \in G: gv = \chi(v)v\}$. Fix a basis of $V$ of common $G$-eigenvectors, say $e_0,\ldots,e_{\di-1}$, and let $x_0,\ldots,x_{\di-1}$ be the dual basis. Such a basis exists since $V$ splits in irreducible $G$-representations of dimension $1$. Observe that each $e_i$ is an element of $V_{\chi}$ for some character $\chi$. Similarly, each $x_i$ is an element of some $V_{\chi}^*$. For each character $\chi$, let $x_{\chi} = \sum_{\{i \in
[\di]: x_i \in V_{\chi}^*\}}x_i$. Note that $x_{\chi}$ can in principle be any non-zero element of $V_{\chi}^*$, provided $V_{\chi}^* \neq \{0\}$. Indeed, we only choose a basis for technical reasons. Observe that $x_{{\chi_0}}\otimes \ldots \otimes x_{\chi_{\ms-1}}$ is $G$-invariant if the product of the corresponding characters is the trivial character.
Since $\widehat{G}$ has cardinality $|G| = n_1$, the $n_1$-fold product of any element of $\widehat{G}$ is the trivial character, and therefore $$\ten_0 := \sum_{\chi \in \widehat{G}} x_{\chi}^{\otimes n_1} \in (V^*)^{\otimes n_1}$$
is a $G$-invariant tensor.
Let $k \in \NN$. We will show that $\M_1 = k +|G|^{k+2}-|G|^{k+1}$ works for this $\ten_0$.

Let $\Gr(f,V^{\otimes [\mm]})$ denote the Grassmannian of
$f$-dimensional subspaces of
$V^{\otimes [\mm]}$, which is a projective algebraic variety over $K$. Set
\begin{align*}
Z(f,k) := \{ W \in \Gr(f,V^{\otimes [\mm]}) \mid &\dim \con(\sigma(W)) \leq k \text{
for all }\\&\ten=\phi(\ten_0), \phi \in \GL_G(V)^{n_1},\sigma
\in S_m \},\end{align*}
a closed subvariety of $\Gr(f,V^{\otimes [\mm]})$.
The assertion of the lemma is equivalent to the statement that the set of $K$-points of $Z(f,k)$ is empty if $f > k$ and $\mm > \M_1$.
So suppose the set of $K$-points of $Z(f,k)$ is nonempty for
some $f > k$, $\mm > M_1$. We will use that it is
stable under $\GL_G(V)^\mm \subseteq \G_\mm$.

Let $D \subseteq \GL_G(V)$ denote the subset of diagonal matrices with
respect to the basis $e_0,\ldots,e_{\di-1}$. Then $D^\mm$ is a connected,
solvable algebraic group and hence by Borel's Fixed Point Theorem
(\cite{Borel91}, Theorem~15.2), $D^\mm$ must have a fixed point $W$
on the projective algebraic variety $Z(f,k)$. Then also $\sigma(W)$
is a fixed point of $D^\mm$ for any $\sigma \in S_\mm$, so we can
rearrange factors if necessary. Any $D^\mm$-stable subspace is spanned
by common eigenvectors for $D^\mm$ (any algebraic representation
of $D^\mm$ is diagonalisable). Now $\omega \in V^{\otimes [\mm]}$
is a $D^\mm$-eigenvector if and only if $\omega = e_{i_0} \otimes
e_{i_1} \otimes \dots \otimes e_{i_{\mm-1}}$ (up to a nonzero scalar)
for some $i_0,\ldots,i_{\mm-1}$ with $i_j \in [\di]$ for each $j \in
[\mm]$. Say $\omega_0,\ldots,\omega_{f-1}$ form a basis of $W$ of common
$D^\mm$-eigenvectors and say $\omega_j = e_{j,0} \otimes e_{j,1} \otimes
\dots \otimes e_{j,\mm-1}$ (with each $e_{j,i}$ equal to some $e_l$). For
a contradiction, it suffices to show that there exists a tensor $\ten$ in
the $\GL_G(V)^{n_1}$-orbit of $\ten_0$ and an element $\sigma \in S_\mm$
as above such that $\con(\sigma(\omega_0)),\ldots,\con(\sigma(\omega_k))$
are linearly independent. Thus we will no longer need
$\omega_{k+1},\ldots,\omega_{f-1}$.

By the second part of Lemma~\ref{lem:combwords} there exists a subset
$J \subseteq [\mm]$ of cardinality at most $k$ such that the tensors
$\omega_{j,J} := \bigotimes_{l \in J} e_{j,l}$ for $j \in [k+1]$ are
pairwise distinct (and hence linearly independent).  Rearranging factors
we may assume that $J \subseteq [k]$. We will contract the $\omega_i$
in $n_1$ positions that all lie beyond the first $k$ positions. If those
contractions are non-zero, then they are automatically linearly independent
since their parts in the first $k$ positions are.

We now set out to find those $n_1$ positions. For each $j \in [k+1]$,
consider the word $w_j \in \widehat{G}^{[\mm]-[k]}$ of length $\mm-k$ with
letter $\chi$ at position $i$ if $e_{j,i} \in V_{\chi}$ (so we basically
consider $\omega_j$ with the first $k$ factors $e_{j,i}$ removed, and
map the remaining factors to their corresponding characters). By the
first part of Lemma~\ref{lem:combwords}, there exists a $\mathbf{\chi} =
(\chi_j)_{j\in[k+1]} \in \widehat{G}^{[k+1]}$ such that $J_{\mathbf{\chi}}
\subseteq [\mm]-[k]$ as in the lemma has cardinality at least $\lceil
\frac{\mm-k}{|\widehat{G}|^{k+1}} \rceil$. The latter expression is at
least equal to $|G|$ by choice of $\M_1$.

Now, pick a single such $\mathbf{\chi}$ and take $I \subseteq
J_{\mathbf{\chi}}$ of cardinality $|G| = n_1$ as above; by applying some
$\sigma_2$ if necessary, we may assume $I = [\mm]-[\mm-n_1]$. Note
that $I \cap [k] =\emptyset$ as promised. For each $j \in [k+1]$
and $l \in I$, we have $e_{j,l} \in V_{\chi_j}$ and we observe that
$\con_0(\omega_{j,I}) = 1$ for all $j$. One easily verifies that
$\con(\omega_j) = \omega_{j,[\mm-n_1]}$ for each $j \in [k+1]$, and these
$k+1$ tensors are linearly independent by the fact that $J \subseteq
[k]$. This concludes the proof.
\end{proof}

\begin{proof}[Proof of Lemma~\ref{lem:ContractionPP_0}]
Let $\n = n_1 = |G|$ and let $\ten_0$ be as in the proof of the previous lemma. Let $k \in \NN$ and let $\M = 2\M_1 = 2(k+|G|^{k+2}-|G|^{k+1})$. Let $\mm > \M$ and let $\omega \in V^{\otimes [\mm]}$ be an element such
that for all $\sigma \in S_\mm$, the image
of $\sigma(\omega)$ under any $G$-equivariant contraction $V^{\otimes [\mm]} \to V^{\otimes [\mm-\n]}$ along a tensor $\ten = \phi(\ten_0)$ for some $\phi \in \GL_G(V)^{\n}$ is an element of $\Yp{[\mm-\n]}$.

Let $[\mm] = I \cup J$ be any partition and consider the corresponding flattening
\[
 \flat \colon V^{\otimes [\mm]} \to V^{\otimes I} \otimes V^{\otimes J}.
\]
Replacing $\omega$ by $\sigma_1(\omega)$ for some $\sigma_1 \in S_\mm$ if necessary, we may assume $J = [\mm]-[\ms]$ and $I = [\ms]$ for some $\ms$ such that $\mm-\ms > \M_1$ without loss of generality. The statement that all $(k+1) \times
(k+1)$-subdeterminants on $\flat \omega$ are zero is equivalent to the
statement that $\flat \omega$ has rank at most $k$ when
regarded as a linear map from $(V^*)^{\otimes I}$ to $V^{\otimes J}$,
or, in other words, that the image $W \subseteq V^{\otimes [\mm]-[\ms]}$ of this
map has dimension at most $k$. Identify $V^{\otimes [\mm]-[\ms]}$ with $V^{\otimes [\mm-\ms]}$ in the natural way.

Since $\vert J \vert = \mm' > \M_1$ we may apply
Lemma~\ref{lem:SubspaceContraction} to $W$. Indeed, all contractions along tensors $\ten \in (V^*)^{\otimes \n}$ of the form $\phi(\ten_0)$ for some $\phi \in \GL_G(V)^\n$ map $\sigma'(W)$ to subspaces of
$V^{\otimes [(\mm-\ms)-\n]}$ of dimension at most $k$ for all $\sigma' \in S_{\mm-\ms}$. This follows from the
fact that this subspace is equal to the image $W'$ of the map $(V^{\otimes
I})^* \to V^{\otimes [\mm-\ms-\n]}$ obtained by first applying $\flat
\omega$ and then contracting along $\ten$. This, on the other hand,
is nothing but the map $\flat' (\omega')$ where $\omega'$ is the image of $\omega$  under the same contraction but applied to $V^{\otimes [\mm]}$, and $\flat'$ is the flattening of $[\mm-\n]$ along $[\ms],[\mm-\ms-\n]$.  Since $\omega'$ gives rise to a map of rank at most $k$ by assumption, $\dim W' \leq k$ as claimed.
Now this holds for all contractions and all factors and we may conclude
that, indeed, $\dim W \leq k$, and $\flat \omega$ has rank at most $k$.
\end{proof}

Note that in both lemmas, we do not need to compute $\con(\sigma(\omega))$ for all $\sigma \in S_\mm$; it suffices to use one $\sigma$ for each subset of $[\mm]$ of cardinality $\n$ to ensure the right factors are being contracted.

\begin{re} \label{re:Remarks}
\begin{description}
\item[1] Since $G$ is Abelian, there is a natural bijection between $\widehat{G}$ and the set of isomorphism classes of irreducible $G$-representations. For this reason, we use the letter $\chi$ both for irreducible $G$-representations and for elements of $\widehat{G}$.
\item[2] It is easily seen that the rank of $\ten_0$ as in Lemma~\ref{lem:SubspaceContraction} is bounded above by the number $N$ of distinct characters that are represented by common $G$-eigenvectors in $V^*$; in particular, the rank can generally be bounded above by $|G|$. Moreover, observe that for any $n \in \NNp$, the elements $x_{\chi}^{\otimes n}$ (with $\chi$ ranging over those characters with $V_{\chi}^* \neq \{0\}$) are linearly independent. Hence clearly, any flattening of $\ten_0$ (other than the flattenings $I = \emptyset$, $J = [|G|]$ and $I = [|G|]$, $J = \emptyset$) of the $\ten_0$ we constructed has rank equal to $N$. Therefore, $\ten_0$ has rank $N$ as well.
\item[3] Potentially, one may do better than $\n = |G|$; one may take for $\n$ the least common multiple of all orders of elements in $G$ (i.e. the exponent of $G$), and may reduce $\M$ correspondingly. For example, for the Klein $4$-group, one may take $\n = 2$ and $\M_1 = k+4^{k+1}$ instead of $\n = 4$ and $\M_1 = k+4^{k+2}-4^{k+1}$.
\item[4] If $G$ is non-trivial, then we can also take $\M_1 = |G|^{k+2}-|G|^{k+1}$ instead of $k+|G|^{k+2}-|G|^{k+1}$, or even take $\n$ to be the exponent $\exp G$ of $G$ and $\M_1 = (\n-1)|G|^{k+1}$.
\item[5] If we restrict ourselves to $G$-stable subspaces $W$ of
$V^{\otimes [\mm]}$, then instead of considering merely the dimension of
$W$, we can consider the $|G|$-tuple of multiplicities of the characters
that are represented by a common $G$-eigenvector in $W$. Using the
same $n_1$ and $\ten_0$ as in Lemma~\ref{lem:SubspaceContraction},
for each tuple $(k_{\chi})_{\chi \in \widehat{G}}$ there is an $\M_1$
such that if for all contractions as in the lemma the multiplicity
of $\chi$ in $\con(\sigma(W))$ is at most $k_{\chi}$ for each $\chi$,
then the multiplicity of $\chi$ in $W$ is at most $k_{\chi}$. In this
case, we can take $\M_1 = (\n-1)|G|^{\max_{\chi}(k_\chi)+1}$. Denoting
by $\Yp[(k_\chi)_\chi]{[\mm]}$ the set of $G$-invariant tensors $\omega$
in $V^{\otimes [\mm]}$ such that for each flattening, the multiplicity of
$\chi$ in the image of $\omega$ is at most $k_\chi$ for each $\chi$,
we can prove an analogue of Lemma~\ref{lem:ContractionPP_0} for
$\Yp[(k_\chi)_\chi]{[\mm]}$ as well. This will be particularly useful
in the case of the $G$-equivariant tree model later on.
\item[6] In general, there may be many possible choices for
$\ten_0$, (in fact, nearly all $G$-invariant tensors can be
used, as the set of tensors such that the lemma is not
satisfied is a closed set that is not equal to the set of
$G$-invariant elements of $(V^*)^{\otimes n_1}$). For
example, we could have taken $\ten_0 =
\sum_{\chi_1,\ldots,\chi_{n_1} \in \widehat{G}:\chi_1\cdot
\ldots \cdot\chi_{n_1} = 1 } \otimes_{j=1}^{n_1}x_{\chi_j} \in (V^*)^{\otimes n_1}$. In the specific case $V = K[G]$, this yields $\ten_0 = \sum_{g \in G} x_g^{\otimes n_1}$ (for some proper choice of a basis of $G$-eigenvectors of $V$), where $\{x_g\}$ is a basis dual to the basis $\{g\}$ of $K[G]$. In this case, our original choice would give $\ten_0 = \sum_{g_1,\ldots,g_n: g_1+\ldots+g_n=0}x_{g_1}\otimes\ldots \otimes x_{g_n}$.
\item[7] In Lemma~\ref{lm:AllSame}, if we restrict ourselves to
$G$-invariant tensors, then we can formulate the following refinement. Let
$(k_{\chi})_{\chi \in \widehat{G}} \in \NN^{\widehat{G}}$ and let $k =
\max_{\chi}(k_\chi)$.

    Let $\mm,n$ be natural numbers with $n \geq k+1$, and let $V_0,\ldots,V_{\mm-1}$ be $G$-representations over $K$. Let $\omega \in V_{[\mm]}$ be $G$-invariant. Then the multiplicity of $\chi$ in the image of $\flat\omega$ is at most $k_{\chi}$ for each $\chi$ and each flattening $\flat$ if and only if there are $\mm$-tuples of $G$-linear maps $\phi_i: V_i \to K[G]^k$ and $\psi_i: K[G]^k \to V_i$ such that $\psi_{[\mm]}(\phi_{[\mm]}(\omega)) = \omega$ and $\phi_{[\mm]}(\omega) \in \Yp[(k_\chi)_\chi]{[\mm]}$.
\item[8] In this lemma, we explicitly make use of the fact that $G$ is
Abelian. Indeed, if $G$ is non-Abelian, then the lemma is false. Suppose namely that $G$ is non-Abelian, and let $V$ be an irreducible $G$-representation of dimension $\di > 1$; observe that $\GL_G(V) \cong K^*$. For $n \in \NNp$, let $\ten \in (V^*)^{\otimes n}$ be a $G$-invariant tensor. Let $m \in \NNp$ with $m \geq n$ and consider the set of $S_{\mm}$-invariant tensors in $V^{\otimes [\mm]}$. This is an ${\mm+\di-1 \choose \di -1}$-dimensional subspace of $V^{\otimes [\mm]}$. The elements in this space that contract to $0$ along $\ten$ are the elements of the (non-trivial) kernel $W$ of a set of ${\mm-n+\di-1 \choose \di -1}$ linear equations. The actions of $\GL_G(V)^n$ and $S_{\mm}$ do not give any additional linearly independent equations, so we have $\phi(\ten)(\sigma(W)) = \{0\}$ for any $\phi \in \GL_G(V)^n$ and $\sigma \in S_{\mm}$, while $W\neq \{0\}$. So Lemma~\ref{lem:SubspaceContraction} does not hold in this case. Likewise, Lemma~\ref{lem:ContractionPP_0} does not hold if $G$ is non-Abelian.
\end{description}
\end{re}

\begin{ex} For $G = \ZZ/2\ZZ$, $k = 2$ and $V = K[G]^2$, the proof of the lemma combined with the remark shows we may use $\n = n_1 = 2$, $\M_1 = 16-8 = 8$ and $\M = 16$; taking basis $(e+g,0),(0,e+g),(e-g,0),(0,e-g)$ of $V$, with dual basis $x_0,x_1,x_2,x_3$ we could take $\ten_0 = (x_0+x_1)\otimes(x_0+x_1) + (x_2+x_3)\otimes(x_2+x_3)$.

In the case $V = K[G]$ and $(k_1,k_{-1}) = (1,1)$, where we write $\widehat{G} = \{1,-1\}$, we may use $\M_1 = 4$ and $\M = 8$.
\end{ex}

\section{Infinite-dimensional tensors and the flattening variety} \label{sec:Infinite}
From now on, fix $k \in \NN$, and let $V$ be a
$G$-representation. Let $\di = \dim V$ be the dimension of
$V$. For each character $\chi$ with $V_{\chi}^* \neq \{0\}$, fix $x_{\chi} \in V_{\chi}^*-\{0\}$ and let $x_{\chi} = 0$ for all other characters. Let $\n = |G|$ and define
$$\ten_0 = \sum_{\chi \in \widehat{G}} x_{\chi}^{\otimes \n}  \in (V^*)^{\otimes \n}$$
as in Section~\ref{sec:Tensors}.
For $\mm \in \NN$, we denote by $\con_0$ the contraction from $V^{\otimes
[\mm+\n]} \to V^{\otimes [\mm]}$ along the tensor $\ten_0$. More
specifically, we have
\[
\con_0(\vect_0 \otimes \ldots \otimes \vect_{\mm-1} \otimes
\vect_{\mm} \otimes \ldots \otimes \vect_{\mm+\n-1}) =
\ten_0(\vect_{\mm} \otimes \ldots \otimes \vect_{\mm+\n-1})
\cdot \vect_0 \otimes \ldots \otimes \vect_{\mm-1}. \]
Dually, this surjective map gives rise to the injective linear map
\[ (V^*)^{\otimes [\mm]} \to (V^*)^{\otimes [\mm+\n]},\
\xi \mapsto \xi \otimes \ten_0. \]
Let $\coord_\mm$ be the coordinate ring of $V^{\otimes [\mm]}$. We identify $\coord_\mm$ with the symmetric algebra $S((V^*)^{\otimes [\mm]})$ generated by the space $(V^*)^{\otimes [\mm]}$, and embed $\coord_\mm$ into $\coord_{\mm+\n}$ by means of the linear inclusion $(V^*)^{\otimes [\mm]} \to (V^*)^{\otimes [\mm+\n]}$ above.

From now on, fix $\m \in [\n]$ and define the projective limit
$$\Ainf:=\varprojlim_{\mm \in \m+\n\NN} V^{\otimes [\mm]}$$ along the
surjective linear contraction maps $\con_0$. This is, in the first place,
an uncountable-dimensional $G$-representation over $K$ (unless $\di=1$,
in which case it is one-dimensional). But it is also the dual of the
countable-dimensional {\em direct} limit of the $(V^*)^{\otimes [\mm]}$
along the inclusion maps. As a consequence, $\Ainf$ is canonically
isomorphic to the set of $K$-algebra homomorphisms $\Cinf \to K$, where
$\Cinf$ is the union $\bigcup_{\mm \in \m+\n\NN} \coord_\mm$. This gives
$\Ainf$ a Zariski topology, with closed sets given by the vanishing
of subsets of $\Cinf$. Since we are only concerned with set-theoretic
statements, we do not need to worry about points of $\Cinf$ over
$K$-algebras other than $K$; the topological space $\Ainf$
suffices for our purposes. The same applies to closed subsets
(subvarieties) of $\Ainf$ featuring below.

At a crucial step in our arguments we will use the following
more concrete description of $\Cinf$.  Extend $\ten_0$ to a basis
$\ten_0,\ten_1,\ldots,\ten_{\di^{\n}-1}$ of $(V^*)^{\otimes \n}$ of
$G$-eigenvectors. Moreover, let $y_0,\ldots,y_{\di-1}$ be any basis of
$V^*$ (not necessarily consisting of $G$-eigenvectors).  Let $\mm$ be
an element of $\m+\n\NN$. Then for any $\p \in \NN$, $(V^*)^{\otimes
[\mm+\p\n]}$ has a basis in bijection with the pairs $(\wo,w)$ with
$\wo$ a word in $[\di]^\mm$ and $w=(i_0,\ldots,i_{\p-1})$ a word of
length $p$ over
the alphabet $[\di^{\n}]$, namely,
\[ \tc_{\mm,\wo,w}:= y_{\wo_0}\otimes
y_{\wo_1}\otimes \ldots \otimes y_{\wo_\mm-1}\otimes\ten_{i_0} \otimes
\cdots \otimes \ten_{i_{\p-1}}. \]
The algebra $\coord_{\mm+p\n}$ is
the polynomial algebra in the variables $\tc_{\mm,\wo,w}$ with $w$
running over all words of length $\p$ and $\wo$ running over all words in
$[\di]^\mm$. In $\Cinf$, the coordinate $\tc_{\mm,\wo,w}$ is identified
with the variable $\tc_{\mm,\wo,w'}$ where $w'$ is obtained from $w$
by appending an infinite string of zeros at the end of $w$. If $w = 0$,
then we also write $\tc_{\mm,\wo} = \tc_{\mm,\wo,w}$.

We conclude that $\Cinf$ is a polynomial ring in countably many variables
that are (for fixed $\mm \in \m+\n\NN$) in bijective correspondence
with triples $(\mm,\wo,(i_0,i_1,\ldots))$ in which all but finitely
many $i_j$ are $0$. The finite set of positions $j$ with $i_j \neq 0$
is called the {\em support} of the word $(i_1,i_2,\ldots)$; likewise,
the set of positions $j$ with $\wo_j \neq 0$ is called the {\em support}
of $\wo$. Note that this gives a different set of variables for each
$\mm \in \m+\n\NN$; we will generally use the set of variables that is
most convenient for our purposes.

Observe that for each $\mm \in \NN$ we have natural
embeddings $\GL_G(V)^{\mm} \to \GL_G(V)^{\mm+\n}$, which
render the contraction maps $V^{\otimes [\mm+\n]} \to
V^{\otimes [\mm]}$ equivariant with respect to
$\GL_G(V)^{\mm}$. Therefore the union of $\GL_G(V)^{\mm}$ for all $\mm \in \m + \n\NN$ acts on $\Ainf$ and $\Cinf$ by passing to the limit.

Let $\Sinf$ denote the union $\bigcup_{\mm \in \m+\n\NN} S_{\mm}$, where $S_\mm$ is embedded in $S_{\mm+\n}$ as the subgroup fixing $\{\mm,\ldots,\mm+\n-1\}$.
Then $\Sinf$ is the group of all bijections $\pi: \NN \to \NN$ whose
set of fixed points has a finite complement. This group acts on $\Ainf$
and on $\Cinf$ by passing to the limit.

The action of $\Sinf$ on $\Cinf$ has the following fundamental property:
for each $f \in \Cinf$ there exists an $\mm \in \m+\n\NN$ such that whenever
$\pi,\sigma \in \Sinf$ agree on the initial segment $[\mm]$ we have
$\pi f=\sigma f$. Indeed, we may take $\mm$ equal to $\m+(\n$ times ($1$ plus the maximum of the union of the supports of words $w$ for which $\tc_{\m,\wo,w}$ appears in $f$)$)$. In this situation, there is a natural left action of the {\em increasing
monoid} $\Inc(\NN)=\{\pi: \NN \to \NN \mid \pi(0)<\pi(1)<\ldots\}$ by
means of injective algebra endomorphisms on $\Cinf$; see \cite[Section
5]{Hillar09}. The action is defined as follows: for $f \in \Cinf$,
let $\mm$ be as above. Then to define $\pi f$ for $\pi \in \Inc(\NN)$
take any $\sigma \in \Sinf$ that agrees with $\pi$ on the interval $[\mm]$
(such a $\sigma$ exists) and set $\pi f:=\sigma f$.

By construction, the $\Inc(\NN)$-orbit of any $f \in \Cinf$
is contained in the $\Sinf$-orbit of $f$. Note that the left action of
$\Inc(\NN)$ on $\Cinf$ gives rise to a {\em right} action of $\Inc(\NN)$
by means of surjective linear maps $\Ainf \to \Ainf$. A crucial argument in Section~\ref{sec:EqNoeth} uses
a map that is not equivariant with respect to $\Sinf$ but is equivariant
relative to $\Inc(\NN)$.

Recall that $\G_\mm$ is the group generated by $S_\mm$ and $\GL_G(V)^\mm$. We can now define $$\Ginf := \bigcup_{\mm \in \m+\n\NN} \G_\mm.$$ This group acts on $\Ainf$ and $\Cinf$ by passing to the limit.

Now we get back to flattenings. Recall that $\con_0: V^{\otimes [\mm+\n]}\to V^{\otimes [\mm]}$ maps $\Yp{[\mm+\n]}$ to $\Yp{[\mm]}$; this means we can define a variety $$\Yinf := \varprojlim_{\mm \in \m+\n\NN}\Yp{[\mm]} \subseteq \Ainf.$$

We describe the determinants of flattenings in more concrete terms in
the coordinates $\tc_{\mm,\wo,w}$. Let $\bwo=(\wo_0,\ldots,\wo_{k})$
be a $(k+1)$-tuple of pairwise distinct words in $[\di]^\mm$. Let
$\bwo':=(\wo'_0,\ldots,\wo'_{k})$ be another such $(k+1)$-tuple.
Suppose that the support of each $\wo_i$ is disjoint from that of each
$\wo'_j$. In this case, it makes sense to speak of $\wo_i+\wo'_j$,
which is again a word in $[\di]^\mm$. We
let $\tc[\bwo;\bwo'] = \tc_\mm[\bwo;\bwo']$ be the $(k+1) \times (k+1)$ matrix with $(i,j)$-entry equal to $\tc_{\mm,\wo_i+\wo'_j}$.
For each $\mm \in \m+\n\NN$, the variety $\Yp{[\mm]}$ is defined by the determinants of all matrices $\tc[\bwo;\bwo']$.
Then the variety $\Yinf$ is defined by the determinants of all matrices $\tc[\bwo;\bwo']$ (viewed as elements of $\Cinf$) with $\mm \in \m+\n\NN$.

Moreover, if $\bw=(w_0,\ldots,w_{k})$ and $\bw'=(w'_0,\ldots,w'_{k})$
are $k+1$-tuples of pairwise distinct infinite words with letters in
$[\di^{\n}]$ with finite support, if $\bwo$ and $\bwo'$ are as above,
and if the support of each $w_i$ is disjoint of that of each $w'_j$
for all $i,j \in [k]$, then we can define a $(k+1) \times (k+1)$-matrix
$\tc[\bwo,\bw;\bwo',\bw']$ in a way analogous to the above.

We now have the following important proposition.

\begin{prop} \label{prop:YFinite}
The flattening variety $\Yinf$ is the common zero set of finitely many $\Ginf$-orbits of $(k+1) \times (k+1)$-determinants $\det \tc[\bwo;\bwo']$ with $\bwo,\bwo'$ as above.
\end{prop}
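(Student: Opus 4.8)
The plan is to show that the (infinitely many) determinantal generators of $\Yinf$ fall into finitely many orbits under the group $\Ginf$. Recall that $\Yinf$ is cut out by all $\det\ten_\mm[\bwo;\bwo']$, where $\mm$ ranges over $\m+\n\NN$ and $\bwo,\bwo'$ are $(k+1)$-tuples of pairwise distinct words in $[\di]^\mm$ with the support of each $\wo_i$ disjoint from that of each $\wo'_j$. The key reductions are: (i) reduce from arbitrary $\mm$ to a single large $\mm$ using the $\Inc(\NN)$-action, equivalently the embeddings $\coord_\mm\hookrightarrow\coord_{\mm+\n}$; (ii) for that fixed $\mm$, reduce modulo $S_\mm$ (permuting positions) and $\GL_G(V)^\mm$ (mixing within each factor) to finitely many normal forms.

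First I would handle step (i). Using the action of $\Inc(\NN)\subseteq\Sinf$ by injective algebra endomorphisms on $\Cinf$ (defined in the excerpt right before the proposition), observe that the inclusion $\coord_\mm\hookrightarrow\coord_{\mm+\n}$ sends $\ten_{\mm,\wo}$ to $\ten_{\mm+\n,\wo0^\n}$, i.e.\ it pads words with zeros, and hence sends $\det\ten_\mm[\bwo;\bwo']$ to a determinant of the same shape with words in $[\di]^{\mm+\n}$. Thus every determinantal generator coming from some $\mm\in\m+\n\NN$ lies in the $\Inc(\NN)$-orbit — hence in the $\Sinf$-orbit, hence in the $\Ginf$-orbit — of a generator living in $\coord_{\mm_0}$ for any sufficiently large fixed $\mm_0\in\m+\n\NN$. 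So it suffices to bound, up to the $\G_{\mm_0}$-action, the generators coming from one level $\mm_0$; and we are free to take $\mm_0$ as large as the finitely many normal forms in step (ii) require.

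Now step (ii), the heart of the matter. Fix $\mm_0$ and consider $\det\ten_{\mm_0}[\bwo;\bwo']$. The union $T$ of the supports of all $\wo_i$ and $\wo'_j$ has size at most $(2k+2)\mm$-ish — in any case bounded purely in terms of $k$ and $\di$, independently of $\mm_0$ — so by applying a suitable $\sigma\in S_{\mm_0}$ we may assume $T\subseteq[N]$ for a fixed $N$ depending only on $k,\di$. Choose $\mm_0\ge N$. Outside $[N]$ all the words are zero, so the determinant only involves coordinates $\ten_{\mm_0,\bu}$ with $\supp\bu\subseteq[N]$; there are only finitely many such monomials and hence only finitely many such determinants, which gives a finite set of $\G_{\mm_0}$-orbit representatives. (If one wants to be economical, one can also use $\GL_G(V)^{\mm_0}$ acting within each tensor factor to further normalise which $G$-eigenvectors $y_{\wo_i(t)}$ occur at each position $t\in[N]$, but this is not needed for finiteness.) Combining with step (i): every defining determinant of $\Yinf$ lies in the $\Ginf$-orbit of one of these finitely many representatives, and conversely each representative's entire $\Ginf$-orbit consists of $(k+1)\times(k+1)$ flattening determinants and so vanishes on $\Yinf$. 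Hence $\Yinf$ is the common zero set of finitely many $\Ginf$-orbits of such determinants.

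The main obstacle I anticipate is making step (i) airtight: one must check that the $\Inc(\NN)$-action genuinely identifies a determinant at level $\mm$ with one at level $\mm+\n$ — this hinges on the compatibility of the basis $\ten_{\mm,\wo,w}$ with the embedding $\coord_\mm\hookrightarrow\coord_{\mm+\n}$, namely that padding a word $\wo\in[\di]^\mm$ by $\n$ zeros is exactly what the structural inclusion does, and that disjointness of supports is preserved. Once that bookkeeping is in place, the rest is a finiteness count. A secondary point to be careful about is that the coordinates $\ten_{\mm,\wo,w}$ depend on $\mm$, so one should fix one value of $\mm$ (and hence one coordinate system) before carrying out the orbit count in step (ii), exactly as the excerpt warns.
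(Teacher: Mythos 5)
Your proposal takes a route that is fundamentally different from the paper's, and unfortunately contains a decisive gap in step~(ii).

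The critical claim in step~(ii) is that the union $T$ of the supports of $\wo_0,\ldots,\wo_k,\wo'_0,\ldots,\wo'_k$ ``has size\dots bounded purely in terms of $k$ and $\di$, independently of $\mm_0$.'' This is false, and your own phrase ``$(2k+2)\mm$-ish'' already betrays it: a word in $[\di]^{\mm_0}$ can have support equal to all of $[\mm_0]$, and the $(k+1)$-tuple $\bwo$ can consist of $k+1$ pairwise distinct words each supported on all of $I$, while $\bwo'$ is supported on all of $J$, so that $T = [\mm_0]$. Nothing in the definition of $\Yp{[\mm]}$ restricts the supports. Once this bound collapses, there is no fixed $N$ and no finite set of representatives at a single level, and the finiteness conclusion does not follow.

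Step~(i) also does not do what you want. First, the structural inclusion $\coord_\mm \hookrightarrow \coord_{\mm+\n}$ is $\xi \mapsto \xi \otimes \ten_0$, which pads by the block tensor $\ten_0 = \sum_\chi(\sum_i x_i)^{\otimes\n}$ rather than by ``zeros'' in the $y$-basis; $\ten_{\mm,\wo}$ maps to the single basis vector $\ten_{\mm,\wo,(0)}$ in the mixed $(y,\ten)$-coordinate system at level $\mm+\n$, not to $\ten_{\mm+\n,\wo 0^\n}$. Second, and more seriously, the $\Ginf$-action never decreases the level: $\Inc(\NN)$ and $\Sinf$ move a determinant that genuinely involves $\mm$ factors to one that involves at least $\mm$ factors, and $\GL_G(V)^\mm$ preserves the level. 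So a determinant at level $\mm > \mm_0$ whose words have large support is not in the $\Ginf$-orbit of any generator ``living in $\coord_{\mm_0}$.'' The direction of reduction you need is the one you cannot get.

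The deeper issue is that you are attempting to prove something strictly stronger than the proposition states: you try to show that \emph{every} flattening determinant lies in finitely many $\Ginf$-orbits, which would be an ideal-theoretic statement. The proposition only asserts the set-theoretic statement that finitely many orbits cut out $\Yinf$ as a common zero set. The paper's proof lives entirely at the zero-set level and hinges on Lemma~\ref{lem:ContractionPP_0}: one picks a level $\ms$ large enough for that lemma, takes the determinants $f_0,\ldots,f_{N-1}$ defining $\Yp{[\ms]}$, and shows that if all $\Ginf$-translates of the $f_i$ vanish on $\omega$, then for every $\mm$ the image $\omega_\mm$ lies in $\Yp{[\mm]}$. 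The key step is to observe that evaluating $f_i$ on $\Ginf$-translates of $\omega$ amounts to evaluating it on contractions of $\sigma(\omega_\mm)$ along tensors $\phi(\ten_0)^{\otimes p}$, and then to apply Lemma~\ref{lem:ContractionPP_0} repeatedly. Your proposal does not invoke this lemma at all, and with it the essential mechanism of the proof is missing. To repair the argument, you would need to abandon the attempt to place all determinants in finitely many orbits and instead argue set-theoretically via the contraction lemma.
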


\begin{proof}[Proof of Proposition~\ref{prop:YFinite}]
Let $\M$ be an integer such that Lemma~\ref{lem:ContractionPP_0} holds for the triple $(\M,\n,\ten_0)$.
Let $f_0,f_1,\dots,f_{N-1} \in \coord_{\ms}$ be finitely many $(k+1) \times (k+1)$-determinants that define $\Yp{[\ms]}$, where $\ms$ is the largest element of $\m+\n\NN$ that satisfies $\ms \leq \M$. Of course, in the inclusion $\coord_{\ms} \subset \Cinf$, each $f_{i}$ may be assumed to be one of the $\det \tc[\bwo;\bwo']$ for $\bwo,\bwo'$ each lists of $k+1$ words supported in $[\ms]$.

We will now show that $\omega \in \Ainf$ is an element of $\Yinf$ if and only if
$f_{i}(\g(\omega)) = 0$ for all $i$ and all $\g \in \Ginf$.  Note that $f_{i}(\g(\omega))$ is equal
to $f_{i}((\g(\omega))_{\ms})$ where $(\g(\omega))_{\ms}$ is the image of $\g(\omega)$ in $V^{\otimes [\ms]}$ under the canonical projection $\Ainf \to V^{\otimes [\ms]}$. Now if $\omega \in \Yinf$, then obviously so is $\g(\omega)$ for each $\g \in \Ginf$, and hence $(\g(\omega))_{\ms}$ is an element of $\Yp{[\ms]}$.
This shows the only if part.

For the converse, suppose that $f_{i}(\g(\omega)) = 0$ for all $i$ and all $\g \in \Ginf$. We need to show that $\omega \in \Yinf$. Equivalently, we need
to show that for all $\mm \geq \ms$ (and $\mm \in \m+\n\NN$), the image $\omega_{\mm} \in V^{\otimes [\mm]}$ of $\omega$ lies in $\Yp{[\mm]}$. Suppose $\mm = \ms+\p\n$ with $\p \in \NN$. Recall that $f_{i} \in \coord_{\ms}$ is identified in $\coord_{\ms+\p\n}$ with $f_{i}$ precomposed with the contraction $\con$ of the last $\p\n$ factors $V$ along $\ten = \ten_0^{\otimes \p}$. This means $f_i(\con((\g\omega)_\mm)) = 0$ for all $i \in [N]$ and all $\g \in \Ginf$ and hence $\con((\g\omega)_\mm) \in \Yp{[\ms]}$ for all $\g \in \Ginf$. Hence in particular, $\con((\g\omega)_\mm) \in \Yp{[\ms]}$ for all $\g \in \G_\mm$ of the form $\phi \circ \sigma$ with $\phi \in \GL_G(V)^{[\mm]-[\ms]}$ and $\sigma \in S_\mm$.

Note that for such $\g$, one has $(\g\omega)_\mm = \g(\omega)_\mm$ and moreover, the element $\con((\g\omega)_\mm)$ can be obtained by performing consecutive contractions of $\sigma(\omega_\mm)$ along tensors of the form $\phi'(\ten_0)$ (and in fact, all contractions of this form can be obtained in this way using some suitable $\g$). By repeatedly applying Lemma~\ref{lem:ContractionPP_0} this means that $\omega_{\mm} \in \Yp{[\mm]}$, and we are done.
\end{proof}

\begin{re} Again, this proof can be extended to a proof for $\Yinf[(k_\chi)_{\chi \in \widehat{G}}]$.
\end{re}

\begin{ex} \label{ex:Yinf[(1,1)]} For $G = \ZZ/2\ZZ$, $k = 2$, $\m = 0$ and $V = K[G]^3$, we have $\M = 16$, hence $\ms = 16$. Following the proof of the proposition, we find $\Yinf[2]$ is defined by the $\Ginf$-orbits of the equations that determine $\Yp[2]{[16]}$. Let $y_0,y_1,y_2,y_3,y_4,y_5 \in V^*$ be a basis dual to the basis $(e,0,0),(g,0,0),(0,e,0),(0,g,0),(0,0,g),(0,0,e)$ of $V$. For $i \in [6]$ and $I \subseteq [20]$, let $\wo_{i,I} \in [6]^{I}$ be the word of which each letter is an $i$. It is now an easy exercise to show that $\Yinf[2]$ is defined by the $\Ginf$-orbits of $\det(\tc[(\wo_{0,[n]},\wo_{2,[n]},\wo_{4,[n]});(\wo_{0,[16]-[n]},\wo_{2,[16]-[n]},\wo_{4,[16]-[n]})])$ for $n \in \{1,2,\ldots,8\}$. Here, we make use of the fact that the $\GL_G(V)$-orbit of any triple of elements $v_0,v_1,v_2 \in V$ is dense in $V$ provided that their projections to the common $G$-eigenspaces of $V$ are linearly independent as well. This holds in a somewhat larger generality as well for general $k$ and $(k+1)$-tuples of elements in $V$.

For $G = \ZZ/2\ZZ$ and $(k_1,k_{-1}) = (1,1)$, things are somewhat more
subtle. Let $V = K[G]$ and $\m= 0$. We have $\M = 8$; let $y_0,y_1 \in
V^*$ be a basis dual to the basis $e+g$, $e-g$ of $V$.  Using the proof in
\cite{Sturmfels05b} that the group-based model for $G=\ZZ/2\ZZ$ is defined
by linear and quadratic polynomials, we can show that $\Yinf[(1,1)]$
is defined by the $\Ginf$-orbits of $\tc_{8,\wo}$ where the cardinality
of $\{i \in [8]: \wo_i = 1\}$ is odd and by the $\Ginf$-orbits of
$\tc_{8,\wo_0}\tc_{8,\wo_1}-\tc_{8,\wo_2}\tc_{8,\wo_3}$ such that:

\begin{description}
\item[a] For each $i \in [8]$, the multiset $\{(\wo_0)_i, (\wo_1)_i\}$ equals the multiset $\{(\wo_2)_i, (\wo_3)_i\}$.
\item[b] For each $j \in \{1,2,3,4\}$, the cardinality of $\{i \in [8]: (\wo_j)_i = 1\}$ is even.
\end{description}

We will give some more details about this in Example~\ref{ex:Z/2Z}.

\end{ex}

\section{Equivariantly Noetherian rings and spaces} \label{sec:EqNoeth}

We briefly recall the notions of equivariantly Noetherian rings and
topological spaces, and proceed to prove the main result of this section,
namely, that $\Yinf$ is $\Ginf$-Noetherian (Theorem~\ref{thm:YEqNoether}).

If a monoid $\Pi$ has a left action by means of endomorphisms on a commutative
ring $R$ (with $1$), then we call $R$ \emph{equivariantly Noetherian}, or
$\Pi$-Noetherian, if every chain $I_0 \subseteq I_1 \subseteq \ldots$
of $\Pi$-stable ideals stabilises. This is equivalent to the statement
that every $\Pi$-stable ideal in $R$ is generated by finitely many
$\Pi$-orbits. Similarly, if $\Pi$ acts on a topological space $X$ by
means of continuous maps $X \to X$, then we call $X$ equivariantly
Noetherian, or $\Pi$-Noetherian, if every chain $X_1 \supseteq X_2
\supseteq \ldots$ of $\Pi$-stable closed subsets stabilises. If $R$ is
a $K$-algebra, then we can endow the set $X$ of $K$-valued points of $R$,
i.e., $K$-algebra homomorphisms $R \to K$ (sending $1$ to $1$), with the
Zariski topology. An endomorphism $\Rhom: R \to R$ gives a continuous map
$\pull:X \to X$ by pull-back, and if $R$ has a left $\Pi$-action making
it equivariantly Noetherian, then this induces a right $\Pi$-action on
$X$ making $X$ equivariantly Noetherian. This means, more concretely,
that any $\Pi$-stable closed subset of $X$ is defined by the vanishing of
finitely many $\Pi$-orbits of elements of $R$. If $\Pi$ happens to be a
group, then we can make the right action into a left action by taking
inverses. Here are some further easy lemmas; for their proofs we refer
to \cite{Draisma08b}.

\begin{lm}\label{lm:Closed}
If $X$ is a $\Pi$-Noetherian topological space, then any
$\Pi$-stable closed subset of $X$ is $\Pi$-Noetherian with respect to the
induced topology.
\end{lm}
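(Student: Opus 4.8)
The plan is to reduce a descending chain of $\Pi$-stable closed subsets of $Y$ to one of $X$, exploiting that $Y$ is \emph{closed} (not merely locally closed) in $X$. First I would fix a $\Pi$-stable closed subset $Y\subseteq X$ and equip it with the subspace topology; the monoid $\Pi$ acts on $Y$ by the restrictions of the given continuous self-maps of $X$, and these restrictions are well-defined (they land in $Y$ because $Y$ is $\Pi$-stable) and continuous for the induced topology. Then I would take an arbitrary descending chain $Y\supseteq Y_1\supseteq Y_2\supseteq\cdots$ of $\Pi$-stable subsets, each closed in $Y$.

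Next I would observe that each $Y_i$, being closed in the subspace $Y$, has the form $C_i\cap Y$ for some closed $C_i\subseteq X$; since $Y$ itself is closed in $X$, the intersection $C_i\cap Y=Y_i$ is closed in $X$. Moreover $Y_i$ is visibly $\Pi$-stable in $X$. Hence $X\supseteq Y_1\supseteq Y_2\supseteq\cdots$ is a descending chain of $\Pi$-stable closed subsets of $X$, which stabilises because $X$ is $\Pi$-Noetherian; consequently the original chain in $Y$ stabilises as well. As the chain was arbitrary, $Y$ is $\Pi$-Noetherian.

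There is essentially no obstacle here: the only point requiring any care is the identification of ``$\Pi$-stable closed subset of $Y$'' with ``$\Pi$-stable closed subset of $X$ contained in $Y$'', which is exactly where the hypothesis that $Y$ is closed in $X$ (and that the $\Pi$-action on $Y$ is the restricted one) is used.
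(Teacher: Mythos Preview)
Your argument is correct and is the standard one-line proof of this elementary lemma. The paper itself does not supply a proof at all: immediately before stating Lemma~\ref{lm:Closed} it says ``for their proofs we refer to \cite{Draisma08b}'', so there is no in-paper argument to compare against. What you wrote is exactly the expected reasoning (closed-in-closed is closed, $\Pi$-stability is inherited, then invoke the chain condition on $X$).
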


\begin{lm}\label{lm:Union}
If $X$ and $Y$ are $\Pi$-Noetherian topological spaces, then the disjoint
union $X \cup Y$ is also $\Pi$-Noetherian with respect to the disjoint
union topology and the natural action of $\Pi$.
\end{lm}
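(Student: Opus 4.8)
The plan is to split a descending chain of $\Pi$-stable closed subsets of $X \cup Y$ into its traces on the two summands and use $\Pi$-Noetherianity of each summand separately. First I would record the elementary observation that, under the natural action, both $X$ and $Y$ are $\Pi$-stable subsets of $X \cup Y$, each of them open and closed; hence for any $\Pi$-stable closed subset $Z \subseteq X \cup Y$ the pieces $Z_X := Z \cap X$ and $Z_Y := Z \cap Y$ are $\Pi$-stable closed subsets of $X$ and of $Y$ respectively, and $Z = Z_X \sqcup Z_Y$.

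Next, given a descending chain $Z_1 \supseteq Z_2 \supseteq \cdots$ of $\Pi$-stable closed subsets of $X \cup Y$, I would apply this to each $Z_i$, producing descending chains $(Z_1)_X \supseteq (Z_2)_X \supseteq \cdots$ in $X$ and $(Z_1)_Y \supseteq (Z_2)_Y \supseteq \cdots$ in $Y$ of $\Pi$-stable closed subsets. By hypothesis $X$ and $Y$ are $\Pi$-Noetherian, so there is an index $N$ beyond which both of these chains are constant. Since $Z_i = (Z_i)_X \sqcup (Z_i)_Y$, the original chain $(Z_i)_i$ is then constant for $i \geq N$ as well, which proves that $X \cup Y$ is $\Pi$-Noetherian. (The statement about the induced topology and action being the disjoint-union ones is built into the fact that $X, Y$ are clopen and $\Pi$-stable, so nothing extra is needed.)

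There is no genuine obstacle here; the only subtlety is the notational one that ``the natural action of $\Pi$'' means the action preserving each summand. If $\Pi$ were allowed to interchange $X$ and $Y$ the statement would need the two summands to be homeomorphic in a $\Pi$-compatible way, and one would stabilise the relevant chains on a common index using a submonoid fixing the decomposition; but in the intended set-up this does not occur. Alternatively, when $X$ and $Y$ are the $K$-valued points of $K$-algebras $R$ and $S$ carrying compatible $\Pi$-actions, one can argue ring-theoretically: $X \cup Y$ has coordinate ring $R \times S$, every $\Pi$-stable ideal of $R \times S$ has the form $I \times J$ with $I \subseteq R$ and $J \subseteq S$ both $\Pi$-stable, and a finite set of $\Pi$-orbit generators for $I$ together with one for $J$ gives a finite set of $\Pi$-orbit generators for $I \times J$. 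Both arguments are short, and either suffices.
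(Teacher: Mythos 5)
Your proof is correct. The paper itself does not give an argument here (it defers the proof to \cite{Draisma08b}), so there is no in-text proof to compare against, but the argument you give is the standard one: since $X$ and $Y$ are $\Pi$-stable clopen pieces of the disjoint union, any $\Pi$-stable closed $Z$ splits as $Z = (Z\cap X)\sqcup(Z\cap Y)$ with each piece $\Pi$-stable and closed, a descending chain of such $Z$'s induces descending chains in $X$ and in $Y$, both of which stabilise by hypothesis, and hence the original chain stabilises. Your remark that ``the natural action of $\Pi$'' must preserve the two summands is exactly the right caveat, and the alternative ring-theoretic formulation via $R\times S$ and product ideals is an equally valid route. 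Nothing is missing.
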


\begin{lm}\label{lm:Image}
If $X$ is a $\Pi$-Noetherian topological space, $Y$ is a topological
space with $\Pi$-action (by means of continuous maps), and $\pull:X \to Y$
is a $\Pi$-equivariant continuous map, then $\im \pull$ is $\Pi$-Noetherian
with respect to the topology induced from $Y$.
\end{lm}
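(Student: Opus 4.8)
The plan is to verify the descending chain condition on $\Pi$-stable closed subsets directly. Write $Y':=\im\pull$, equipped with the subspace topology inherited from $Y$. Note first that $Y'$ is $\Pi$-stable: if $\pi\in\Pi$ and $y=\pull(x)\in Y'$, then $\pi y=\pi\,\pull(x)=\pull(\pi x)\in Y'$ by $\Pi$-equivariance of $\pull$. Hence the $\Pi$-action on $Y$ restricts to an action on $Y'$ by continuous maps, and it is meaningful to ask whether $Y'$ is $\Pi$-Noetherian.

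Next I would take an arbitrary descending chain $Z_1\supseteq Z_2\supseteq\cdots$ of $\Pi$-stable closed subsets of $Y'$ and pull it back along $\pull$. Each $\pull^{-1}(Z_i)$ is closed in $X$ by continuity of $\pull$, and it is $\Pi$-stable: if $x\in\pull^{-1}(Z_i)$ and $\pi\in\Pi$, then $\pull(\pi x)=\pi\,\pull(x)\in\pi Z_i\subseteq Z_i$, so $\pi x\in\pull^{-1}(Z_i)$. Thus $\pull^{-1}(Z_1)\supseteq\pull^{-1}(Z_2)\supseteq\cdots$ is a descending chain of $\Pi$-stable closed subsets of $X$, which stabilises by the hypothesis that $X$ is $\Pi$-Noetherian: there is an $N$ with $\pull^{-1}(Z_i)=\pull^{-1}(Z_N)$ for all $i\geq N$.

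Finally I would push the equality forward. The key elementary fact is that for any subset $S\subseteq Y'=\im\pull$ one has $\pull(\pull^{-1}(S))=S$ (the inclusion $\subseteq$ is automatic, and $\supseteq$ holds precisely because $S$ lies in the image). Applying this to $S=Z_i$ and to $S=Z_N$ gives, for $i\geq N$,
\[
Z_i=\pull\bigl(\pull^{-1}(Z_i)\bigr)=\pull\bigl(\pull^{-1}(Z_N)\bigr)=Z_N,
\]
so the original chain stabilises, which is exactly the assertion that $\im\pull$ is $\Pi$-Noetherian.

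I do not expect any genuine obstacle in this argument; it is purely formal. The only two points deserving a moment's attention are that $\im\pull$ is genuinely $\Pi$-stable — so that the statement even makes sense — and the set-theoretic identity $\pull(\pull^{-1}(S))=S$, which is valid only because $S\subseteq\im\pull$; it is this subtlety that forces one to consider closed subsets of $\im\pull$ rather than of $Y$. The whole argument is uniform in whether $\Pi$ is a monoid or a group.
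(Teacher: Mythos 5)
Your proof is correct, and it is the standard argument: pull a descending chain back to $X$, use the Noetherian hypothesis there, and push forward using $\pull(\pull^{-1}(S))=S$ for $S\subseteq\im\pull$. The paper itself does not spell out a proof of this lemma (it defers to \cite{Draisma08b}), but your argument is exactly the natural one, and you correctly handle the small points that matter — that $\im\pull$ is $\Pi$-stable so the statement is well-posed, that the preimage of a set closed in the subspace topology on $\im\pull$ is closed in $X$, and that the set-theoretic identity requires $S$ to lie in the image.
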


\begin{lm} \label{lm:Quot}
If $\Pi$ is a group and $\Pi' \subseteq \Pi$ a subgroup acting from the
left on a topological space $X'$, and if $X'$ is $\Pi'$-Noetherian,
then the orbit space $X:=(\Pi \times X')/\Pi'$ is a left-$\Pi$-Noetherian
topological space.
\end{lm}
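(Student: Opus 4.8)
The plan is to reduce the statement to a single bijection. Give $\Pi$ and $\Pi'$ the discrete topology, let $\Pi$ act on $\Pi\times X'$ by $\gamma\cdot(\sigma,x')=(\gamma\sigma,x')$, and let $\Pi'$ act on the right by $(\sigma,x')\cdot\tau=(\sigma\tau,\tau^{-1}x')$; these two actions commute, so the orbit space $X=(\Pi\times X')/\Pi'$, with the quotient topology, carries a residual left $\Pi$-action by homeomorphisms. Let $q\colon\Pi\times X'\to X$ be the quotient map, which is continuous and open. I claim that
\[
C\ \longmapsto\ C_0:=\{\,x'\in X'\ :\ q(1,x')\in C\,\}
\]
is an inclusion-preserving bijection from the set of $\Pi$-stable closed subsets of $X$ onto the set of $\Pi'$-stable closed subsets of $X'$. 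Granting this, $\Pi$-Noetherianity of $X$ is immediate: any descending chain $C_1\supseteq C_2\supseteq\cdots$ of $\Pi$-stable closed subsets of $X$ is carried to a descending chain of $\Pi'$-stable closed subsets of $X'$, which stabilises since $X'$ is $\Pi'$-Noetherian, and hence so does the original chain.

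To prove the claim I would work on $\Pi\times X'$. A closed set $C\subseteq X$ is $\Pi$-stable exactly when $q^{-1}(C)$ is a closed subset of $\Pi\times X'$ that is stable under both the left $\Pi$- and the right $\Pi'$-action. Write $q^{-1}(C)=\bigsqcup_{\sigma\in\Pi}\{\sigma\}\times C_\sigma$ with each $C_\sigma\subseteq X'$ closed. Left $\Pi$-stability forces $C_{\gamma\sigma}=C_\sigma$ for all $\gamma,\sigma\in\Pi$, so all fibres equal a single closed set, which is precisely $C_0$ as in the display (since $q(1,x')\in C$ iff $(1,x')\in q^{-1}(C)$ iff $x'\in C_1$). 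Right $\Pi'$-stability then unwinds, via $(\sigma,x')\cdot\tau=(\sigma\tau,\tau^{-1}x')$, to $\tau^{-1}C_0\subseteq C_0$ for all $\tau\in\Pi'$, i.e.\ to $\Pi'$-stability of $C_0$. Conversely, given a $\Pi'$-stable closed $C_0\subseteq X'$, set $C:=q(\Pi\times C_0)$; one computes $q^{-1}(C)=\Pi\times C_0$ (here $\Pi'$-stability of $C_0$ is used), so $C$ is closed, and it is $\Pi$-stable with associated fibre $C_0$. These two constructions are mutually inverse and evidently monotone, proving the claim.

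The only thing requiring genuine care is point-set bookkeeping: tracking how the commuting left-$\Pi$ and right-$\Pi'$ actions interact on $\Pi\times X'$, and verifying the identity $q^{-1}(q(\Pi\times C_0))=\Pi\times C_0$ that makes $C$ closed. Since $\Pi$ is discrete, $\Pi\times X'$ is just a disjoint union of copies of $X'$ permuted simply transitively by $\Pi$, so these checks are routine; consistently with the lemma's billing among the ``easy lemmas'', there is no real obstacle beyond getting the actions right.
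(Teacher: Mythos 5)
Your proof is correct and follows exactly the approach the paper indicates: it makes explicit the one-to-one correspondence between closed $\Pi$-stable subsets of $X$ and closed $\Pi'$-stable subsets of $X'$ that the paper states as a consequence of the product/quotient construction and then leaves to the reader (or to \cite{Draisma08b}). Your verification of the bijection and of the identity $q^{-1}(q(\Pi\times C_0))=\Pi\times C_0$ is accurate, so this is simply a detailed write-up of the paper's argument rather than a different route.
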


In this lemma, $\Pi \times X'$ carries the direct-product topology
of the discrete group $\Pi$ and the topological space $X'$, the right
action of $\Pi'$ on it is by $(\pi,x)\sigma=(\pi \sigma, \sigma^{-1}
x)$, and the topology on the quotient is the coarsest topology
that makes the projection continuous. The left action of $\Pi$ on the
quotient comes from left-action of $\Pi$ on itself.  As a consequence,
closed $\Pi$-stable sets in $X$ are in one-to-one correspondence with
closed $\Pi'$-stable sets in $X'$, whence the lemma. Next we recall a
fundamental example of an equivariantly Noetherian ring, which will be
crucial in what follows.

\begin{thm}[\cite{Cohen67,Hillar09}] \label{thm:RlN}
For any Noetherian ring $Q$ and any $l \in \NN$, the ring $Q[x_{ij}
\mid i=0,\ldots,l-1;\ j=0,1,2,3,\ldots]$ is equivariantly Noetherian with
respect to the action of $\Inc(\NN)$ by $\pi x_{ij}=x_{i\pi(j)}.$
\end{thm}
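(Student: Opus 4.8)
This is due to Cohen \cite{Cohen67} and Hillar--Sullivant \cite{Hillar09}; here is the plan. Write $R=Q[x_{ij}]$ and $\Pi=\Inc(\NN)$, acting on $R$ by $Q$-algebra endomorphisms. The strategy is to deduce $\Pi$-Noetherianity of $R$ from a well-quasi-order (WQO) statement about monomials, via an equivariant version of Gröbner theory. First I would fix an admissible monomial order $<$ on $R$ (a well-order on monomials with $1$ least and compatible with multiplication) that is in addition $\Pi$-compatible, meaning $\bu<\bv$ implies $\pi\bu<\pi\bv$ for all $\pi\in\Pi$, so that $\lt(\pi f)=\pi\,\lt(f)$ for every $f$. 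Some care is needed: ordinary degree-lexicographic orders fail to be well-orders with infinitely many variables. One remedy is a graded ``reverse-lexicographic from the top'' order: compare total degree first, then compare the exponents at the $\prec$-largest variables first, where $\prec$ is the order $x_{00}\prec x_{10}\prec\cdots\prec x_{l-1,0}\prec x_{01}\prec\cdots$ on the variables; this is $\Pi$-compatible because each $\pi\in\Pi$ acts on $(\{x_{ij}\},\prec)$ as an order-embedding.

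The combinatorial core is that the monomials of $R$, with the preorder $\bu\preceq\bv\iff \pi\bu\mid\bv$ for some $\pi\in\Pi$, form a WQO. I would prove this by encoding a monomial $\bu=\prod x_{ij}^{e_{ij}}$ as the finite word $\widetilde W(\bu)$ over the alphabet $\NN^l$ obtained by listing the exponent vectors $(e_{0j},\dots,e_{l-1,j})$ for $j=0,1,\dots,J$, with $J$ the largest column index occurring in $\bu$; thus $\widetilde W(\bu)$ has a nonzero last letter, and $\bu\mapsto\widetilde W(\bu)$ is a bijection onto the set of such words. The point is that $\bu\preceq\bv$ if and only if $\widetilde W(\bu)$ embeds into $\widetilde W(\bv)$ in Higman's sense (an order-preserving injection on positions, dominating letterwise in the componentwise order on $\NN^l$) --- the intervening zero columns and the freedom to extend any partial increasing map to all of $\NN$ are exactly what makes this equivalence hold; one must resist the temptation to ``compress out'' zero columns, which breaks it. Since $(\NN^l,\le)$ is a WQO by Dickson's lemma, Higman's lemma gives that finite words over it form a WQO, and hence so do the monomials of $R$ under $\preceq$.

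Granting the WQO, the case of $Q$ a field is completed as in classical Gröbner theory: for a $\Pi$-stable ideal $I$, the leading-monomial set $\lt(I)$ is stable under multiplication by monomials and under $\Pi$, hence is a $\preceq$-up-set, hence has finitely many $\preceq$-minimal elements $\bu_1,\dots,\bu_s$; choosing $f_t\in I$ with $\lt(f_t)=\bu_t$, a one-step leading-term reduction together with well-foundedness of $<$ shows $I$ is generated by the $\Pi$-orbits of the $f_t$, so in particular chains of $\Pi$-stable ideals stabilise. For a general Noetherian $Q$ I would carry leading coefficients along: attach to $I$ the data $\bu\mapsto\operatorname{lc}_I(\bu):=\{0\}\cup\{\text{leading coefficient of }f:\ f\in I,\ \lt(f)=\bu\}$, an ideal of $Q$ depending $\preceq$-monotonically and $\Pi$-equivariantly on $\bu$; two $\Pi$-stable ideals $I'\subseteq I''$ with the same data are equal by the same reduction, so it suffices to show an ascending chain of such ``data functions'' stabilises. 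That follows by combining the monomial WQO with the ascending chain condition in $Q$: from a non-stabilising chain one picks witnesses $(\bu_n,a_n)$ with $a_n\in\operatorname{lc}_{I_{n+1}}(\bu_n)\setminus\operatorname{lc}_{I_n}(\bu_n)$, extracts a $\preceq$-increasing subsequence of the $\bu_n$ (available since $\preceq$ is a WQO), uses that the associated ascending chain of ideals of $Q$ stabilises, and reaches a contradiction.

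The main obstacle is the WQO claim --- in particular pinning down the encoding so that $\Pi$-divisibility is literally Higman domination --- together with the minor but real nuisance of exhibiting a bona fide $\Pi$-compatible \emph{well-}order; Dickson's and Higman's lemmas are then invoked off the shelf, and the passage from a field to a Noetherian coefficient ring is routine once the ``leading-coefficient data'' viewpoint is adopted.
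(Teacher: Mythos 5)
The paper offers no proof of this statement; it is invoked as a black box with a citation to Cohen \cite{Cohen67} and Hillar--Sullivant \cite{Hillar09}, so there is no internal argument to compare against. Your sketch is a faithful outline of the proof in those references: establish a well-quasi-order on monomials under the $\Inc(\NN)$-divisibility preorder via Dickson plus Higman, fix an $\Inc(\NN)$-compatible monomial well-order, run an equivariant Gr\"obner-type leading-term reduction, and pass from a field to a Noetherian coefficient ring by tracking leading-coefficient ideals (as in Cohen and Aschenbrenner--Hillar). The encoding of a monomial as a finite word over $\NN^{l}$ whose last letter is nonzero is exactly what makes $\Inc(\NN)$-divisibility coincide with Higman embedding, and your warning against compressing out zero columns is the right thing to flag.

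One terminological slip worth correcting: the order you describe --- grade by total degree, then compare exponents starting at the $\prec$-largest variable where the monomials differ, with the larger exponent winning --- is lexicographic ``from the top,'' not reverse-lexicographic. A genuine reverse-lex tiebreaker (smaller exponent at the distinguished variable wins) would give the infinite strictly descending chain $x_{00}>x_{01}>x_{02}>\cdots$ among degree-one monomials, so it is not a well-order on infinitely many variables; this is precisely the failure mode you set out to avoid. The ``lex from the top'' reading does work: within a fixed degree, the maximal variable index occurring in a monomial is non-increasing along any strictly descending chain, so such a chain eventually lives in finitely many variables and must terminate; and $\Inc(\NN)$-compatibility holds because $\Inc(\NN)$ acts on the variable chain $x_{00}\prec x_{10}\prec\cdots$ by order-embeddings. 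So the construction you gesture at is sound, but be explicit about the tiebreaker direction if you flesh this out.
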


Main Theorems III and IV will be derived from the following theorem,
whose proof needs the rest of this section.

\begin{thm} \label{thm:YEqNoether}
For every natural number $k$ the variety $\Yinf$ is an $\Ginf$-Noetherian
topological space.
\end{thm}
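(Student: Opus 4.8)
The plan is to exhibit $\Yinf$ as a closed, $\Inc(\NN)$-stable subset of the image of an $\Inc(\NN)$-Noetherian topological space under an $\Inc(\NN)$-equivariant continuous map, building that source space from the Cohen--Hillar ring of Theorem~\ref{thm:RlN}, and then to pass from $\Inc(\NN)$ up to $\Ginf$. Two reductions make the passage easy. First, $\Sinf\subseteq\Ginf$ acts on $\Yinf$ by homeomorphisms, so any descending chain of $\Ginf$-stable closed subsets of $\Yinf$ is a fortiori a descending chain of $\Sinf$-stable closed subsets; hence it suffices to prove that $\Yinf$ is $\Sinf$-Noetherian. Second, because the $\Inc(\NN)$-orbit of every $f\in\Cinf$ lies in its $\Sinf$-orbit, the $\Sinf$-stable ideal of any $\Sinf$-stable closed subset of $\Ainf$ is $\Inc(\NN)$-stable; so such a subset is also $\Inc(\NN)$-stable, and a finite set of $\Inc(\NN)$-orbits defining it is also a finite set of $\Sinf$-orbits. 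Thus $\Inc(\NN)$-Noetherianity of $\Yinf$ implies $\Sinf$-Noetherianity, and it is enough to establish the former.

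The structural heart of the argument -- and the reason $\Yinf$ is Noetherian while $\Ainf$ itself is not -- is that every $\omega\in\Yinf$ has a \emph{bounded} matrix-product (``tensor-train'') factorisation along the chain $0-1-2-\cdots$. Indeed, for each $\mm\in\m+\n\NN$ the projection $\omega_\mm\in\Yp{[\mm]}$ has all its prefix flattenings $\flat_{[i],[\mm]-[i]}$ of rank at most $k$, so the image $B_i^{(\mm)}\subseteq V^{\otimes([\mm]-[i])}$ of $\omega_\mm$, viewed as a linear map $(V^{\otimes[i]})^*\to V^{\otimes([\mm]-[i])}$, has dimension $d_i^{(\mm)}\le k$; since contracting the tail by $\con_0$ can only shrink these spaces, they stabilise to subspaces $B_i$ of dimension $d_i\le k$ as $\mm\to\infty$. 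Choosing identifications $B_i\cong K^{d_i}\subseteq K^k$ coherently across $\mm$, one reads off ``transfer tensors'' $M_i$ lying in the \emph{fixed} finite-dimensional space $F_0:=(K^k)^*\otimes V\otimes K^k$ whose matrix-product contraction over the bond spaces recovers $\omega_\mm$ for all $\mm$; and, arranging the contraction conventions so that going from level $\mm+\n$ down to level $\mm$ is deletion of the last $\n$ transfer tensors (absorbed into a fixed right cap), this family is $\con_0$-compatible and assembles into the point $\omega$ of $\Ainf$ itself. That a tensor-train decomposition with bond dimensions equal to the prefix-flattening ranks exists at all is elementary linear algebra over any field; the work lies in making the choices coherent and $\con_0$-compatible, and in the bookkeeping for the residue class $\m$ modulo $\n$ and for the end caps -- which may require enlarging $F_0$ to a still finite-dimensional $F$ and using the formal closure properties of equivariantly Noetherian spaces (Lemmas~\ref{lm:Closed}, \ref{lm:Union}, \ref{lm:Quot}).

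This construction yields a continuous map $p\colon Z\to\Ainf$, where $Z$ is (a finite disjoint union or quotient of) a countable product $\prod_{j\in\NN}F$ of copies of $F$, sending a sequence of transfer tensors to the point of $\Ainf$ it generates; by design $p$ intertwines the shift on sequences with the $\Inc(\NN)$-action on $\Ainf$, and $\Yinf\subseteq\im p$ by the previous paragraph. The coordinate ring of $\prod_{j\in\NN}F$ is an $\Inc(\NN)$-equivariant quotient of a polynomial ring $K[x_{ij}\mid i\in[\ell],\ j\in\NN]$ (with $\ell$ an embedding dimension of $F$) on which $\Inc(\NN)$ acts by $\pi x_{ij}=x_{i\pi(j)}$, hence $\Inc(\NN)$-Noetherian by Theorem~\ref{thm:RlN}; by Lemmas~\ref{lm:Union} and~\ref{lm:Quot} the same holds for $Z$, so $Z$ is an $\Inc(\NN)$-Noetherian topological space. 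By Lemma~\ref{lm:Image}, $\im p$ is $\Inc(\NN)$-Noetherian. Since $\Yinf$ is closed in $\Ainf$ (it is cut out by the determinants $\det\ten[\bwo;\bwo']$), is contained in $\im p$, and is $\Inc(\NN)$-stable, Lemma~\ref{lm:Closed} gives that $\Yinf$ is $\Inc(\NN)$-Noetherian; by the two reductions of the first paragraph, $\Yinf$ is $\Ginf$-Noetherian.

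The main obstacle is the second paragraph: producing the matrix-product parametrisation so that it is simultaneously \emph{exact} on all of $\Yinf$ (not merely generically -- which is precisely why one works with the determinantally closed flattening variety rather than with rank loci), \emph{valued in one finite-dimensional space independent of $\mm$} (this is what makes Theorem~\ref{thm:RlN} applicable, and is where the bound $k$ enters), and \emph{compatible with the contractions $\con_0$} so that $p$ lands in $\Ainf$ and is $\Inc(\NN)$-equivariant. The map $p$ is not $\Sinf$-equivariant, since a matrix-product factorisation singles out an ordering of the tensor factors; it is the promised map that is equivariant only relative to $\Inc(\NN)$, and this is why the argument is carried out for $\Inc(\NN)$ and only afterwards lifted to $\Sinf$ and $\Ginf$.
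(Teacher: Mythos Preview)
Your proposal differs substantially from the paper's proof, and the core construction in the second paragraph has two genuine gaps that I do not see how to close without essentially reverting to the paper's strategy.

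\textbf{The map $p$ does not land in $\Ainf$.} For a sequence $(M_j)_j \in \prod_j F$ with fixed caps $L,R$, write $\omega_\mm$ for the matrix-product contraction of $M_0,\ldots,M_{\mm-1}$. For $(\omega_\mm)_\mm$ to lie in the projective limit $\Ainf$ you need $\con_0(\omega_{\mm+\n})=\omega_\mm$ for every $\mm$. Carrying out the contraction shows that this forces the right cap $R$ to be a common eigenvector (eigenvalue $1$) of the $k\times k$ matrices obtained by pairing $\ten_0$ with $M_\mm\otimes\cdots\otimes M_{\mm+\n-1}$, for all $\mm$. This is a nontrivial closed condition on $(M_j)_j$; restricting $p$ to that locus destroys the direct applicability of Theorem~\ref{thm:RlN}, since you now have to show that this locus is itself $\Inc(\NN)$-Noetherian.

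\textbf{Even where defined, $p$ is not $\Inc(\NN)$-equivariant.} Take $\pi\in\Inc(\NN)$ skipping a single index $i$. On the source, $(M_j)\cdot\pi$ simply deletes $M_i$ and concatenates the bond between $M_{i-1}$ and $M_{i+1}$. On the target, the right action of $\pi$ on $\Ainf$ contracts the $i$-th tensor factor against (a factor of) $\ten_0$; in matrix-product terms this multiplies the left cap by the $k\times k$ matrix $\ten_0(M_i)$ rather than deleting $M_i$. The two operations agree only when the cap is an eigenvector of $\ten_0(M_i)$---again a nontrivial condition depending on $i$. The sequential bond structure of a tensor train is simply not compatible with the action of increasing injections that skip positions.

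The paper's proof avoids both issues by induction on $k$ together with a \emph{local} version of your idea. One writes $\Yinf=\Yinf[k-1]\cup Z_0\cup\cdots\cup Z_{N-1}$, where each $Z_a$ is the open locus on which a fixed $k\times k$ flattening determinant is nonzero. On such a $Z'$ the determinant can be inverted, and a Cramer-type recursion (Lemma~\ref{lm:Zprime}) expresses every coordinate $\ten_{\mm,\wo,w}$ as a rational function in those with $|\supp w|\le 1$. That recursion is the algebraic avatar of your matrix-product factorisation, but it is only available after localisation; moreover it is equivariant only for the smaller monoid $\Inc(\NN)'$ that moves $\n$-blocks rigidly and fixes $[\mm]$, which sidesteps the skipping problem. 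The passage from $\Sinf'$-Noetherianity of $Z'$ up to $\Ginf$-Noetherianity of $Z_a$ then uses Lemma~\ref{lm:Quot}, and the boundary where all these determinants vanish is exactly $\Yinf[k-1]$, handled by induction. Your global parametrisation would have to succeed precisely on that boundary as well, where the bond dimensions drop and no continuous choice of bond-space basis exists; this is why the paper's proof is stratified rather than global.
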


We will proceed by induction on $k$. For $k=0$ the variety
$\Yinf$ consists of a single point, the zero tensor, and the
theorem trivially holds.  Now assume that the theorem holds for
$k-1$. By Proposition~\ref{prop:YFinite} there exists $\mm \in \m+\n\NN$ and there exist $k$-tuples
$\bwo_0,\ldots,\bwo_{N-1},\bwo'_0,\ldots,\bwo'_{N-1}$ of words in $[\di]^\mm$,
such that $\tc[\bwo_a;\bwo'_a]$ is defined for all $a \in [N]$ (i.e.,
the supports of the words in $\bwo_a$ are disjoint from the supports
of the words in in $\bwo'_a$) and such that $\Yinf[k-1]$ is the common zero set of the
polynomials in $\bigcup_{a=0}^{N-1} \Ginf \det(\tc[\bwo_a;\bwo'_a])$. For each
$a \in [N]$ let $Z_a$ denote the open subset of $\Yinf$ where not
all elements of $\Ginf \det(\tc[\bwo_a;\bwo'_a])$ vanish; hence we have
\[ \Yinf = \Yinf[k-1] \cup Z_0 \cup \ldots \cup Z_{N-1}. \]
We will show that each $Z_a,\ a\in [N]$ is an $\Ginf$-Noetherian
topological space, with the topology induced from the Zariski topology on
$\Ainf$. Together with the induction hypothesis and Lemmas~\ref{lm:Union}
and \ref{lm:Image}, this then proves that $\Yinf$ is $\Ginf$-Noetherian,
as claimed.

To prove that $Z:=Z_a$ is $\Ginf$-Noetherian, consider
$\bwo:=\bwo_a=(\wo_0,\ldots,\wo_{k-1})$ and $\bwo':=\bwo'_a=(\wo'_0,\ldots,\wo'_{k-1})$ with all $\wo_i,\wo'_j \in [\di]^\mm$. Let $Z'$ denote the open subset of $\Yinf$ where
$\det(\tc[\bwo;\bwo'])$ is non-zero. This subset is stable under the group
$\Sinf'$ of all permutations $\sigma$ in $\Sinf$ that restrict to the identity
on $[\mm]$ and such that there is $\tau \in \Sinf$ such that $\sigma(\mm+\p\n+i) = \mm+\tau(\p)\n+i$ for any $\p\in \NN$ and $i \in [\n]$. Note that for such $\sigma$, one has $\sigma(\tc_{m,\wo,w}) = \tc_{m,\wo,\tau(w)}$ where $\tau(w)_{\p} = w_{\tau^{-1}(\p)}$. More explicitly, $\Sinf'$ consists of all permutations in $\Sinf$ that restrict to the identity on $[\mm]$ and that permute the set of blocks of the form $[\mm+(\p+1)\n]-[\mm+\p\n]$ with $\p \in \NN$.

\begin{lm} \label{lm:Zprime}
The open subset $Z' \subseteq \Yinf$ is an $\Sinf'$-Noetherian topological
space.
\end{lm}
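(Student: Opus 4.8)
### Proof proposal for Lemma~\ref{lm:Zprime}

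The plan is to realize $Z'$ as a continuous equivariant image of a Noetherian space built from the prototype in Theorem~\ref{thm:RlN}, so that Lemma~\ref{lm:Image} does the rest. On $Z'$ the matrix $\ten[\bwo;\bwo']$ is invertible, and this invertibility is exactly the kind of ``finite determinantal nondegeneracy'' that one can use to parametrize the remaining coordinates. Concretely, I would first fix the block decomposition $\NN = [\mm] \sqcup \bigsqcup_{\p \in \NN} ([\mm+(\p+1)\n]-[\mm+\p\n])$ that is permuted by $\Sinf'$, and note that every coordinate $\ten_{\mm,\wo,w}$ of $\Cinf$ is indexed by the fixed word $\wo \in [\di]^\mm$ together with the finite-support word $w$ over $[\di^\n]$, with $\Sinf'$ acting only by permuting the block-positions of $w$ (via the formula $\sigma(\ten_{\mm,\wo,w}) = \ten_{\mm,\wo,\tau(w)}$ recorded just before the lemma). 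So as an $\Sinf'$-set the coordinate ring looks like finitely many ``colours'' $\wo$ times a copy of the polynomial ring on variables indexed by finite subsets/words in one countable parameter — precisely the setting of Theorem~\ref{thm:RlN} (with $\Inc(\NN)$ in place of $\Sinf'$; recall the $\Inc(\NN)$-action on $\Cinf$ factors through $\Sinf'$-orbits on any given element).

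Next I would use the invertibility of $\ten[\bwo;\bwo']$ to \emph{straighten} the flattening. Over $Z'$, the image of the flattening map $\flat\omega: (V^{\otimes I})^* \to V^{\otimes J}$ has dimension exactly $k$ (it cannot exceed $k$ since $\omega\in\Yinf$, and it is $\geq k$ because the $k\times k$ minor $\det\ten[\bwo;\bwo']$ is nonzero), and the $k$ columns indexed by $\bwo'$ already form a basis of this image while the $k$ rows indexed by $\bwo$ separate it. Hence every other entry $\ten_{\mm,\wo'',w''}$ of $\omega$ is a \emph{rational} function — with denominator a power of $\det\ten[\bwo;\bwo']$ — of the ``pivot'' entries $\ten_{\mm,\wo_i+\wo'_j}$ together with the ``border'' entries in the pivot rows and pivot columns (the standard Cramer/cofactor expression: every $(k+1)\times(k+1)$ minor extending $\ten[\bwo;\bwo']$ vanishes). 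The content of this step is that $Z'$ is a \emph{closed} subset of the localization $(\Ainf)_{\det\ten[\bwo;\bwo']}$ cut out purely by these Cramer relations, and its coordinate ring is a localization of a polynomial ring on the border coordinates in the $k$ pivot rows and $k$ pivot columns. The border coordinates in the pivot columns are indexed by $([\di]^{\mm}\text{-words})\times(\text{finite words in }[\di^\n])$, \emph{finitely many} row-colours times a countable block-parameter; likewise for the pivot rows; the pivot entries themselves are finite in number. That is, $Z'$ is the image of a Zariski-open subset of a polynomial ring of the shape in Theorem~\ref{thm:RlN} (a disjoint union of finitely many copies, one per colour $\wo$), under an $\Inc(\NN)$-equivariant — hence $\Sinf'$-compatible — continuous map.

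Concretely the argument assembles as: (i) Theorem~\ref{thm:RlN} gives that the polynomial ring $Q[x_{ij}]$ on the border coordinates is $\Inc(\NN)$-Noetherian, for $Q$ the (Noetherian, being finitely generated over $K$) ring generated by the finitely many pivot coordinates; (ii) Lemma~\ref{lm:Union} lets me pass to the finite disjoint union over colours $\wo$; (iii) a localization of an $\Inc(\NN)$-Noetherian ring is again $\Inc(\NN)$-Noetherian (the inverse image of a $\Pi$-stable ideal under the localization map is $\Pi$-stable, and the standard correspondence of ideals preserves finite generation of orbits), giving a Noetherian space $X$; (iv) the Cramer formulas define an $\Inc(\NN)$-equivariant continuous surjection $X \twoheadrightarrow Z'$ (equivariance because the formulas are built symmetrically from the blocks, and $\Inc(\NN)$ surjects onto the relevant $\Sinf'$-action on any finite piece), so Lemma~\ref{lm:Image} yields that $Z'$ is $\Inc(\NN)$-Noetherian; (v) finally $\Sinf'$-Noetherianity follows from $\Inc(\NN)$-Noetherianity exactly as in \cite{Draisma08b}/\cite{Hillar09}, since every $\Sinf'$-stable closed set is $\Inc(\NN)$-stable and $\Sinf'$-orbits are controlled by $\Inc(\NN)$-orbits on each bounded-support stratum.

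The main obstacle I expect is step (iv): writing down the Cramer parametrization carefully enough that it is genuinely $\Inc(\NN)$-equivariant and genuinely \emph{surjective} onto all of $Z'$, not just onto a dense open subset. One has to check that a point of $Z'$ is determined by, and can be reconstructed from, the pivot block together with the two families of border coordinates — i.e. that no information is lost when $\omega$ ranges over the full $\Yinf$ rather than over generic tensors — and that the reconstruction map is defined \emph{everywhere} on the open set where $\det\ten[\bwo;\bwo']\neq 0$ (this is where it matters that $\Yinf$ already forces all the larger minors to vanish, so the Cramer expression is not merely forced on a subvariety but is an identity on $Z'$). The bookkeeping of which border coordinates are ``free'' and verifying that the index set of free coordinates is, after collapsing the fixed initial segment $[\mm]$, of the form ``finitely many colours $\times$ one countable block-parameter'' — so that Theorem~\ref{thm:RlN} applies verbatim — is the technically delicate part, but it is combinatorial bookkeeping rather than a conceptual difficulty.
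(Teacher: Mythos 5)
Your overall strategy agrees with the paper's: realize $Z'$ as contained in the image of an $\Inc(\NN)'$-equivariant map from (the $K$-points of) a localization of an equivariantly Noetherian polynomial ring, then invoke Lemma~\ref{lm:Image} and Lemma~\ref{lm:Closed}, and finally pass from $\Inc(\NN)'$ to $\Sinf'$. But there is a genuine gap in your step (i)--(ii): the polynomial ring you build on ``border coordinates'' is \emph{not} of the form covered by Theorem~\ref{thm:RlN}, and in fact it is not equivariantly Noetherian.

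Concretely, if you fix a flattening with row index set contained in $[\mm]$, the pivot columns contribute only finitely many border entries, but a pivot \emph{row} has entries indexed by all column indices, and a column index is a pair $(\wo',w)$ with $\wo'$ a word on $J\cap[\mm]$ and $w$ an arbitrary finite-support word over $[\di^\n]$. The set of finite-support words over $[\di^\n]$ is \emph{not} ``one countable parameter'' in the sense of Theorem~\ref{thm:RlN}: under $\Inc(\NN)'$ it breaks into infinitely many orbits (distinguished e.g.\ by the multiset of non-zero letters). Theorem~\ref{thm:RlN} only covers the ``narrow'' ring $Q[x_{ij}]$ with $i$ in a finite set and $j\in\NN$; the ``wide'' ring with variables indexed by finite subsets or finite words is genuinely different and fails to be $\Inc(\NN)$-Noetherian --- for instance, in $K[x_S : S\subset\NN\ \text{finite}]$ the chain of $\Inc(\NN)$-stable ideals $I_n := (x_S : |S|\le n)$ is strictly increasing. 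So you cannot start from ``the polynomial ring on all border coordinates'' and cite Theorem~\ref{thm:RlN}.

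The missing idea, which is the heart of the paper's proof, is to shrink the starting ring much further: take $R$ to be the polynomial ring in the variables $\ten_{\mm,\wo,w}$ with $|\supp(w)| \le 1$ only (for all $\wo\in[\di]^\mm$). That index set really is (finitely many colours) $\times\,\NN$, so Theorem~\ref{thm:RlN} applies directly, and the localization $S = R[\det(\ten[\bwo;\bwo'])^{-1}]$ is $\Inc(\NN)'$-Noetherian. The reconstruction of the remaining coordinates is then done \emph{recursively}, one block-position at a time: to express $\ten_{\mm,\wo,w}$ with $|\supp(w)|=b+1$, peel off the largest position $\p$ in $\supp(w)$ and use the vanishing of the specific $(k{+}1)\times(k{+}1)$-determinant $\det\ten[(\wo_0,\dots,\wo_k),(0,\dots,0,w_k);(\wo'_0,\dots,\wo'_k),(0,\dots,0,w'_k)]$ (where $w = w_k + w'_k$ with $\supp(w'_k)=\{\p\}$), whose off-corner entries all have $w$-support of cardinality at most $b$. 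Note that each recursion step uses a determinant from a \emph{different} flattening (one block is moved across the partition), not repeated Cramer on a single fixed flattening. By contrast, your ``single-flattening Cramer'' picture is what leads you to the too-large ring. Finally, your worry about surjectivity in step (iv) is actually the easy part: once the $\Inc(\NN)'$-equivariant ring homomorphism $\Rhom:\Cinf\to S$ is built, the inclusion $Z'\subseteq\im\pull$ is immediate because the determinants used in the recursion all vanish identically on $\Yinf\supseteq Z'$, and one then applies Lemma~\ref{lm:Image} and Lemma~\ref{lm:Closed} to the intersection $\Yinf\cap\im\pull = Z'$.
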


\begin{proof}
We will prove that it is $\Inc(\NN)'$-Noetherian, where $\Inc(\NN)'$
is the set of all increasing maps $\pi \in \Inc(\NN)$ that restrict
to the identity on $[\mm]$ and such that there is $\tau \in \Inc(\NN)$ such that $\pi(\mm+\p\n+i) = \mm+\tau(\p)\n+i$ for any $i \in [\n]$; consult Section~\ref{sec:Infinite} for the
action of $\Inc(\NN)$. Since the $\Inc(\NN)'$-orbit of an equation is
contained in the corresponding $\Sinf'$-orbit, this will imply that $Z'$
is $\Sinf'$-Noetherian.

We start with the polynomial ring $R$ in the variables $\tc_{\mm,\wo,w}$, where
$w$ runs over all infinite words over the alphabet $[\di^{\n}]$ with the
property that the support of $w$ has cardinality at most $1$. Among these variables there are $\di^\mm$ for which
$w = 0$, namely the $\tc_{\mm,\wo}$ with $\wo \in [\di]^\mm$, and the remaining variables are
labelled by $[\di]^\mm \times ([\di^{\n}]-\{0\}) \times \NN$, where the element of $[\di^{\n}]-\{0\}$ denotes the non-zero letter of $w$ and the element of $\NN$ denotes the position at which this non-zero letter occurs. On these variables acts $\Inc(\NN)'$, fixing the first $\di^\mm$ variables and acting only on the last (position) index of the last set of variables. By
Theorem~\ref{thm:RlN} with $Q$ the ring in the first $\di^\mm$ variables
and $l=\di^\mm \times (\di^{\n}-1)$, the ring $R$ is $\Inc(\NN)'$-Noetherian.
Let $S=R[\det(\tc[\bwo;\bwo'])^{-1}]$ be the localisation of $R$ at the
determinant $\det \tc[\bwo,\bwo']$; again, $S$ is $\Inc(\NN)'$-Noetherian. We
will construct an $\Inc(\NN)'$-equivariant map $\pull$ from the set of
$K$-valued points of $S$ to $\Ainf$ whose image contains $Z'$.  We do
this, dually, by means of an $\Inc(\NN)'$-equivariant homomorphism $\Rhom$
from $\Cinf$ to $S$.

To define $\Rhom$ recursively, we first fix a partition $I,J$ of $[\mm]$ such
that the support of each $\wo_i$ is contained in $I$ and the support of each $\wo'_j$ is contained in $J$. Now if $\tc_{\mm,\wo,w} \in \Cinf$ is one of the variables in $R$,
then we set $\Rhom (\tc_{\mm,\wo,w}):=\tc_{\mm,\wo,w}$. Suppose that we have already defined $\Rhom$ on variables $\tc_{\mm,\wo,w}$ such that $\supp(w)$ has cardinality at most $b$, let $w$ be a word for which $\supp(w)$ has cardinality $b+1$ and let $\wo$ be a word in $[\di]^\mm$. We will define the image of $\tc_{\mm,\wo,w}$. Let
$\p$ be the maximum of the support of $w$, and write $w=w_{k}+w'_{k}$,
where the support of $w'_{k}$ is $\{\p\}$ and the
support of $w_{k}$ is contained in $[\p]$. Likewise, write $\wo = \wo_k+\wo'_k$ where  the support of $\wo_{k}$ is contained in $I$ and the support of $\wo'_{k}$ is contained in $J$. Consider the determinant of the matrix
\[
\tc[(\wo_0,\ldots,\wo_{k}),(w_0,\ldots,w_{k});(\wo'_0,\ldots,\wo'_{k}),(w'_0,\ldots,w'_{k})], \]
where $w_0,\ldots,w_{k-1}$ and $w'_0,\ldots,w'_{k-1}$ are
all equal to the infinite word over $[\di^\n]$ consisting of zeroes only.
This determinant equals
\[ \det(\tc[(\wo_0,\ldots,\wo_{k-1});(\wo'_0,\ldots,\wo'_{k-1})]) \cdot \tc_{\mm,\wo,w} - \pol, \]
where $\pol \in \Cinf$ is a polynomial in variables that are of the form
$\tc_{\mm,\wo_i+\wo'_j,w_i+w'_j}$ with $i,j \leq k$ but not both equal to $k$. All of these $w_i+w'_j$ have support of cardinality at most $b$ (since only $w_{k}$ and $w'_{k}$ have non-empty support and moreover, these two words have support of cardinality at most $b$), so $\Rhom(\pol)$ has already been defined. Then we set
\[ \Rhom(\tc_{\mm,\wo,w}) := \det(\tc[\bwo,\bwo'])^{-1} \Rhom(\pol). \]
The map $\Rhom$ is $\Inc(\NN)'$-equivariant by construction.

The set $Z' \subseteq \Yinf$ is contained in the image of the map
$\pull$. Indeed, this follows directly from the fact that the determinant
of the matrix
\[ \tc[(\wo_0,\ldots,\wo_{k}),(w_0,\ldots,w_{k});(\wo'_0,\ldots,\wo'_{k}),(w'_0,\ldots,w'_{k})]. \]
vanishes on $Z'$ while $\det(\tc[\bwo,\bwo'])$ does not. More precisely,
$Z'$ equals the intersection of $\Yinf$ with $\im \pull$, and hence by
Lemmas~\ref{lm:Image} and~\ref{lm:Closed} it is $\Inc(\NN)'$-Noetherian.
We already pointed out that this implies that $Z'$ is $\Sinf'$-Noetherian.
\end{proof}

Now that $Z'$ is $\Sinf'$-Noetherian, Lemma~\ref{lm:Quot} implies that
the $\Ginf$-space $(\Ginf \times Z')/\Sinf$ is $\Ginf$-Noetherian. The
map from this space to $\Ainf$ sending $(g,z')$ to $gz'$ is
$\Ginf$-equivariant and continuous, and its image is the open set $Z
\subseteq \Yinf$. Lemma~\ref{lm:Image} now implies that $Z$ is
$\Sinf$-Noetherian. We conclude that, in addition to the closed subset
$\Yinf[k-1] \subseteq \Yinf$, also the open subsets $Z_0,\ldots,Z_{N-1}$
are $\Sinf$-Noetherian. As mentioned before, this implies that
$\Yinf=\Yinf[k-1] \cup Z_0 \cup \ldots \cup Z_{N-1}$ is
$\Sinf$-Noetherian, as claimed in Theorem~\ref{thm:YEqNoether}.

\begin{re} Since $\Yinf[(k_{\chi})_\chi]$ is an $\G$-stable closed subset of $\Yinf[\sum_{\chi}k_{\chi}]$, it is an $\Ginf$-Noetherian
topological space as well.
\end{re}

\begin{re} \label{re:NonAbelian}
A natural question regarding our Main Theorems is why we
restrict to Abelian groups $G$. Do our results carry over to general
$G$, so that they apply to other phylogenetic models? Frankly,
we do not know. Certainly the fact that $G$ is Abelian is used in
the proof of Lemma~\ref{lem:SubspaceContraction}. This is used in
Proposition~\ref{prop:YFinite} to prove that $\Yinf[k]$ is defined
by finitely many polynomials up to symmetry, which in turn is used in
the induction proof in this section that $\Yinf[k]$ is Noetherian. In
the non-Abelian case, we have no idea whether (a suitable variant of)
$\Yinf[k]$ is defined by finitely many orbits of equations; and (a variant
of) $\Ainf$ seems simply too large to work with directly.  On the other
hand, in the case where $G$ has a normal Abelian subgroup that acts
transitively on $B$, finiteness results are proved in \cite{Michalek11}.
\end{re}

\section{Proofs of the main theorems} \label{sec:Proofs}
Recall that in Section~\ref{sec:Infinite}, we fixed $\n \in
\NNp$, a $G$-representation $V$, a tensor $\ten_0$ (viewed
as a contraction $\con_0: V^{\otimes[\mm+\n]} \to
V^{\otimes[\mm]}$ for each $\mm \in \NN$) and a $k \in \NN$.
Moreover, for each $\m \in [\n]$ we defined the flattening
variety $\Yinf$ which implicitly depends on all of these. In
this section, $\n$ and $\ten_0$ are still defined as before;
however, we wish to stress that some of the theorems that
follow hold for {\em any} $k \in \NN$ and any $\m \in [\n]$;
in these cases, we explicitly mention them in the statement
of the theorems. If we do not mention them, then they will be
defined implicitly as above. Finally, we will sometimes use specific
$G$-representations $V$ in our theorems.

Here are a few theorems that follow from Theorem ~\ref{thm:YEqNoether}.

\begin{thm} \label{thm:mainVII} For any fixed natural number $k$, any closed $\Ginf$-stable subset $\Zinf$ of $\Yinf$ is the common zero set in $\Ainf$ of finitely many $\Ginf$-orbits of polynomials in $\Cinf$.
\end{thm}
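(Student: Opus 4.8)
The plan is to deduce Theorem~\ref{thm:mainVII} directly from Theorem~\ref{thm:YEqNoether} together with the general formalism of equivariantly Noetherian spaces recalled at the beginning of Section~\ref{sec:EqNoeth}. The key point is that Theorem~\ref{thm:YEqNoether} asserts precisely that $\Yinf$, with its Zariski topology inherited from $\Ainf$, is a $\Ginf$-Noetherian topological space. A closed $\Ginf$-stable subset $\Zinf \subseteq \Yinf$ is then, by Lemma~\ref{lm:Closed}, again $\Ginf$-Noetherian in the induced topology, but more to the point it is an element of a descending chain of $\Ginf$-stable closed subsets of $\Yinf$; the content we need is the standard translation ``$\Ginf$-Noetherian space'' $\Leftrightarrow$ ``every $\Ginf$-stable closed subset is cut out by finitely many $\Ginf$-orbits of functions'', exactly as spelled out in the paragraph preceding Lemma~\ref{lm:Closed}.

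Concretely, here is the argument I would write. Since $\Zinf$ is closed in $\Yinf$ and $\Yinf$ is closed in $\Ainf$ (it is the common zero set of the determinants $\det\ten[\bwo;\bwo']$, cf.\ Section~\ref{sec:Infinite}), $\Zinf$ is closed in $\Ainf$, hence is the zero set of some ideal $I \subseteq \Cinf$; replacing $I$ by its radical we may take $I$ to be the full vanishing ideal of $\Zinf$, which is $\Ginf$-stable because $\Zinf$ is. Now consider the poset of $\Ginf$-stable closed subsets of $\Ainf$ contained in $\Zinf$ and containing only finitely many $\Ginf$-orbits worth of ``extra'' equations; more efficiently, argue by contradiction: if $\Zinf$ were not the zero set of finitely many $\Ginf$-orbits of elements of $\Cinf$, one could build an infinite strictly descending chain $\Zinf \subsetneq \cdots$ — no, rather an infinite strictly \emph{descending} chain $Y_1 \supsetneq Y_2 \supsetneq \cdots$ of $\Ginf$-stable closed subsets of $\Yinf$ each containing $\Zinf$, by adjoining one orbit of equations at a time from the vanishing ideal of $\Zinf$ and never terminating, contradicting Theorem~\ref{thm:YEqNoether}. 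Hence finitely many $\Ginf$-orbits $\Ginf f_1, \ldots, \Ginf f_r$ of polynomials in $\Cinf$ suffice, and $\Zinf$ is their common zero set in $\Ainf$.

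I would therefore phrase the proof as a two-line reduction: intersecting with $\Yinf$ shows $\Zinf$ is a $\Ginf$-stable closed subset of the $\Ginf$-Noetherian space $\Yinf$ (Theorem~\ref{thm:YEqNoether}), and by the equivalence recorded just before Lemma~\ref{lm:Closed} — every $\Ginf$-stable closed subset of a $\Ginf$-Noetherian $K$-scheme's set of $K$-points is the vanishing locus of finitely many $\Ginf$-orbits of coordinate-ring elements — this is exactly the assertion. The only mild subtlety, and the one worth a sentence, is bookkeeping about where the $f_i$ live: they are elements of $\Cinf$, the coordinate ring of $\Ainf$, and the zero set is taken in all of $\Ainf$ rather than just in $\Yinf$; but since $\Yinf$ itself is already defined in $\Ainf$ by $\Ginf$-orbits of the determinants $\det\ten[\bwo;\bwo']$, we may simply append those finitely many orbits to $\Ginf f_1,\dots,\Ginf f_r$ and obtain a finite list of $\Ginf$-orbits in $\Cinf$ whose common zero set in $\Ainf$ is $\Zinf$. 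There is no real obstacle here — the theorem is a formal corollary of Theorem~\ref{thm:YEqNoether} — so the main ``work'' is just getting the quantifiers and the ambient space right.
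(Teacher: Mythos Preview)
Your proposal is correct and follows exactly the paper's own two-step argument: use Theorem~\ref{thm:YEqNoether} to cut $\Zinf$ out of $\Yinf$ by finitely many $\Ginf$-orbits, then append the finitely many $\Ginf$-orbits defining $\Yinf$ in $\Ainf$. The one thing to make explicit is that the ``finitely many orbits'' of determinants defining $\Yinf$ in $\Ainf$ is precisely the content of Proposition~\ref{prop:YFinite}, which you should cite rather than assert.
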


\begin{proof}
As $\Zinf$ is a closed $\Ginf$-stable subsets of $\Yinf$, and as $\Yinf$
is an $\Ginf$-Noetherian topological space (Theorem~\ref{thm:YEqNoether}),
$\Zinf$ is cut out from $\Yinf$ by finitely many $\Ginf$-orbits of
equations. Moreover, $\Yinf$ itself is cut out from $\Ainf$ by finitely
many $\Ginf$-orbits of Equations (Proposition~\ref{prop:YFinite}),
and hence the same is true for $\Zinf$.
\end{proof}

\begin{thm} \label{thm:mainVIII}

Let $\Zinf$ be the projective limit in $\Ainf$ of certain
$\G_\mm$-stable closed subsets $Z_\mm \subseteq \Yp{[\mm]}$
for $\mm$ running through $\m+\n\NN$ that satisfy $\con_0(Z_{\mm+\n}) \subseteq Z_\mm$ for any $\mm \in \m+\n\NN$.

Suppose moreover that there exists a tensor $\epsilon_0 \in V^{\otimes [\n]}$ such that the inclusion maps $\iota: V^{\otimes [\mm]} \to V^{\otimes [\mm + \n]},\ \omega \mapsto \omega \otimes \epsilon_0$ map $Z_\mm$ into $Z_{\mm+\n}$ and such that $\con_0 \circ \iota = \id_{V^{\otimes [\mm]}}$ (i.e. $\ten_0(\epsilon_0) = 1$).

Then there exists $\degr \in \NN$ such that for all $\mm \in \m+\n\NN$, $Z_\mm$ is defined by the vanishing of a number of polynomials of degree at most $\degr$.
\end{thm}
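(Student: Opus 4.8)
The plan is to take the finitely many $\Ginf$-orbits of polynomials that cut out $\Zinf$ inside $\Ainf$ — produced by Theorem~\ref{thm:mainVII} — and pull them back to each finite level along a canonical section $V^{\otimes[\mm]}\to\Ainf$ built from $\con_0$ and $\iota$, noting that this pullback does not raise degrees. So the first step is to check that $\Zinf=\varprojlim_{\mm\in\m+\n\NN}Z_\mm$ is indeed a closed $\Ginf$-stable subset of $\Yinf$: it lies in $\Yinf$ because $Z_\mm\subseteq\Yp{[\mm]}$ for every $\mm$; it is closed because it is the intersection over $\mm$ of the preimages, under the continuous canonical projections $\Ainf\to V^{\otimes[\mm]}$, of the closed sets $Z_\mm$; and it is $\Ginf$-stable because each $Z_\mm$ is $\G_\mm$-stable, the projections are $\G_\mm$-equivariant, and $\con_0(Z_{\mm+\n})\subseteq Z_\mm$ allows one to handle the finitely many levels below the one at which a given element of $\Ginf$ lives. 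Applying Theorem~\ref{thm:mainVII}, there are finitely many $f_0,\dots,f_{N-1}\in\Cinf$ such that $\Zinf$ is the common zero set of $\bigcup_{i=0}^{N-1}\Ginf f_i$; after enlarging the level I may assume all $f_i$ lie in a single $\coord_\ell$ with $\ell\in\m+\n\NN$, and I set $\degr:=\max_i\deg f_i$.

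Second, for each $\mm\in\m+\n\NN$ I would build a map $s_\mm\colon V^{\otimes[\mm]}\to\Ainf$ by sending $\omega$ to the system whose component in $V^{\otimes[\mm-j\n]}$ is $\con_0^{j}(\omega)$ (for $j\geq0$ with $\mm-j\n\geq\m$) and whose component in $V^{\otimes[\mm+j\n]}$ is $\omega\otimes e_0^{\otimes j}$ (for $j\geq0$). The hypothesis $\con_0\circ\iota=\id$, i.e.\ $\ten_0(e_0)=1$, is precisely what makes this system compatible under the maps $\con_0$, so $s_\mm(\omega)$ is a genuine point of $\Ainf$; its component in $V^{\otimes[\mm]}$ is $\omega$, so $s_\mm$ is a section of the canonical projection. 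The crucial feature is that the component of $s_\mm(\omega)$ in any $V^{\otimes[\mm']}$ depends \emph{linearly} on $\omega$ — it is either a power of $\con_0$ applied to $\omega$, or $\omega$ tensored with a fixed tensor — so the induced pullback $s_\mm^{\ast}\colon\Cinf\to\coord_\mm$ is a $K$-algebra homomorphism that does not raise degree. Using $\con_0(Z_{\mm+\n})\subseteq Z_\mm$ downwards and $\iota(Z_\mm)\subseteq Z_{\mm+\n}$ upwards, one checks $s_\mm(Z_\mm)\subseteq\Zinf$; conversely, since $s_\mm$ is a section, $\omega\notin Z_\mm$ forces $s_\mm(\omega)\notin\Zinf$.

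Third, I would assemble the conclusion. The action of $\GL_G(V)^{\mm'}$ on $\coord_{\mm'}$ is by linear substitutions, that of $S_{\mm'}$ permutes the coordinate functions, and the level inclusions $\coord_{\mm'}\hookrightarrow\coord_{\mm''}$ are induced by a linear inclusion of dual spaces; hence all of these preserve degree, so every $\g f_i$ with $\g\in\Ginf$ still has degree at most $\degr$, and therefore so does $s_\mm^{\ast}(\g f_i)\in\coord_\mm$. Each of these polynomials vanishes on $Z_\mm$ because $s_\mm(Z_\mm)\subseteq\Zinf$; and if $\omega\notin Z_\mm$ then $s_\mm(\omega)\notin\Zinf$, so some $\g f_i$, hence some $s_\mm^{\ast}(\g f_i)$, is nonzero at $\omega$. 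Thus $Z_\mm$ is exactly the common zero set of the family $\{s_\mm^{\ast}(\g f_i)\mid\g\in\Ginf,\ i\}$, all of whose members have degree at most $\degr$; since $\coord_\mm$ is Noetherian, finitely many already suffice, and $\degr$ was chosen independently of $\mm$. This is the asserted bound.

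I expect the only genuinely delicate point to be the construction of $s_\mm$: one must verify that the prescribed components really do form a point of the projective limit $\Ainf$ — this is exactly where the existence of $e_0$ and the normalisation $\ten_0(e_0)=1$ are used, and explains why that hypothesis appears — and that every component is a linear function of $\omega$, which is what keeps the transported equations within degree $\degr$. The rest is routine bookkeeping with the definitions of $\Ainf$, $\Cinf$, $\Ginf$ and the canonical projections, together with the two stability hypotheses on the $Z_\mm$.
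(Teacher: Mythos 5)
Your proposal is correct and follows essentially the same route as the paper: both arguments extend $\omega\in V^{\otimes[\mm]}$ to the compatible system $\omega_\infty=s_\mm(\omega)\in\Ainf$ (using $\ten_0(e_0)=1$ for compatibility), apply Theorem~\ref{thm:mainVII} to get a degree bound $\degr$ for $\Zinf$, and transport those equations back to level $\mm$ via $\con_0$ for levels below $\mm$ and via $\iota$ for levels above, noting that all the maps involved are linear and hence degree-preserving. The paper phrases the final step as a proof by contradiction (some degree-$\le\degr$ equation for $\Zinf$ would pull back to a degree-$\le\degr$ equation for $Z_\mm$ that fails on $\omega$), whereas you exhibit the defining family $\{s_\mm^*(\g f_i)\}$ directly, but this is a cosmetic difference.
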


\begin{proof}
By Theorem~\ref{thm:mainVII} there exists a $\degr$ such that $\Zinf$ is defined in $\Ainf$ by polynomials of degree at most $\degr$; we prove that the same $\degr$ suffices in Theorem~\ref{thm:mainVIII}. Indeed, suppose that all polynomials of degree at most $\degr$ in the ideal of $Z_\mm$ vanish on a tensor $\omega \in V^{\otimes [\mm]}$. Let $\omega_\infty$ be the element of $\Ainf$ obtained from $\omega$ by successively applying $\iota$. More precisely, $\omega_\infty$ is the element in $\Ainf$ defined by $(\omega_\infty)_{\mm+\p\n} = \omega \otimes (\epsilon_0)^{\otimes \p}$ for any $\p \in \NN$. Here, $(\omega_\infty)_{\mm+\p\n}$ denotes the image of $\omega_\infty$ under the natural projection $\Ainf \to V^{\otimes [\mm+\p\n]}$.

We claim that $\omega_{\infty}$ lies in
$\Zinf$. Indeed, otherwise some $\coord_{\mm'}$ contains a polynomial $f$ of degree
at most $\degr$ that vanishes on $Z_{\mm'}$ but not on $\omega_\infty$. Now
$\mm'$ cannot be smaller than $\mm$, because then $f$ vanishes on $Z_\mm$
but not on $\omega$. But if $\mm' = \mm+\p\n \in \mm+\n\NN$, then $f \circ \iota^{\p}$ is a polynomial in $\coord_\mm$ of degree at most $\degr$ that vanishes on $Z_\mm$ but not on $\omega$. This contradicts the
assumption on $\omega$.
\end{proof}

The next theorem will be rather more subtle than the previous ones, as it involves contractions along $G$-invariant tensors that are not necessarily of length $\p\n$. For this reason, we will assume the existence of closed subsets $Z_\mm$ of $\Yp{[\mm]}$ for each $\mm \in \NN$, rather than just for each $\mm \in \m+\n\NN$.

\begin{thm} \label{thm:mainIX}

For each $\mm \in \NN$, let $Z_\mm \subseteq \Yp{[\mm]}$ be an $\G_\mm$-stable closed subset. Suppose that all contractions $V^{\otimes [\mm]} \to V^{\otimes [\ms]}$ along $G$-invariant tensors in $(V^*)^{\otimes \mm-\ms}$ map $Z_\mm$ to $Z_{\ms}$.

Suppose moreover that there exists a $G$-invariant vector $e_0 \in V$ such that the inclusion maps $\iota: V^{\otimes [\mm]} \to V^{\otimes [\mm+1]},\ \omega \mapsto \omega \otimes e_0$ map $Z_\mm$ into $Z_{\mm+1}$ for each $\mm \in \NN$ and such that $\con_0 \circ \iota^{\n} = \id_{V^{\otimes [\mm]}}$ for each $\mm \in \NN$.

Then there exists $\M \in \m+\n\NN$ such that for all $\mm \in \M + \n\NNp$ and for all $\omega \in V^{\otimes [\mm]}$ the following are equivalent:

\begin{description}
\item[1] For all $\sigma \in S_\mm$, and all contractions $\con: V^{\otimes [\mm]} \to V^{\otimes [\ms]}$ along $G$-invariant tensors in $(V^*)^{\otimes \mm-\ms}$ with $\ms \leq \M$, one has $\con(\sigma(\omega)) \in Z_{\ms}$.
\item[2] One has $\omega \in Z_\mm$.
\end{description}
\end{thm}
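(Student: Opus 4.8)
The plan is to deduce Theorem~\ref{thm:mainIX} from the flattening version of the contraction principle. The implication $\mathbf{2}\Rightarrow\mathbf{1}$ is immediate: by hypothesis all contractions along $G$-invariant tensors map $Z_\mm$ into the relevant $Z_{\ms}$, and each $Z_\mm$ is $\G_\mm$-stable, hence in particular $S_\mm$-stable. So the real content is $\mathbf{1}\Rightarrow\mathbf{2}$. To set this up, first choose $\M$: I would pass to the infinite limit $\Ainf$ attached to the congruence class $\m$ of $\mm$ modulo $\n$, form the projective limit $\Zinf:=\varprojlim_{\mm\in\m+\n\NN} Z_\mm$ (well-defined since $\con_0(Z_{\mm+\n})\subseteq Z_\mm$, which follows from the hypothesis because $\con_0$ is contraction along the $G$-invariant $\ten_0^{\otimes\cdot}$), and observe that $\Zinf$ is a closed $\Ginf$-stable subset of $\Yinf$. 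By Theorem~\ref{thm:YEqNoether} and Theorem~\ref{thm:mainVII}, $\Zinf$ is cut out from $\Ainf$ by finitely many $\Ginf$-orbits of polynomials $f_0,\ldots,f_{N-1}\in\Cinf$; let $\M$ be a sufficiently large element of $\m+\n\NN$ so that all these $f_i$ lie in $\coord_{\M}$, and so that $\M$ also exceeds the bound produced by Lemma~\ref{lem:ContractionPP_0} for the triple $(k,\n,\ten_0)$.

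Now fix $\mm\in\M+\n\NNp$ and a tensor $\omega\in V^{\otimes[\mm]}$ satisfying $\mathbf{1}$. Using the vector $e_0$ with $\ten_0(e_0^{\otimes\n})=1$, extend $\omega$ to $\omega_\infty\in\Ainf$ by $(\omega_\infty)_{\mm+\p\n}=\omega\otimes e_0^{\otimes\p\n}$ for all $\p$; this is consistent with the $\con_0$-projections exactly because $\ten_0(e_0^{\otimes\n})=1$. The strategy is to show $\omega_\infty\in\Zinf$, and then to descend. For the first part I would verify that $f_i(\g(\omega_\infty))=0$ for every $i$ and every $\g\in\Ginf$. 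Since $f_i\in\coord_{\M}$, the value $f_i(\g(\omega_\infty))$ depends only on the image of $\g(\omega_\infty)$ in $V^{\otimes[\M]}$; and writing $\g=\phi\circ\sigma$ with $\sigma\in S_{\mm'}$ and $\phi\in\GL_G(V)^{\mm'}$ for $\mm'$ large, the projection of $\g(\omega_\infty)$ to $V^{\otimes[\M]}$ is obtained from $\omega$ by a permutation, a $\GL_G(V)$-action in each factor, and then a contraction along a $G$-invariant tensor (namely $\phi'(\ten_0^{\otimes\cdot})\otimes$ a tensor built from $e_0$) down to $V^{\otimes[\M]}$, with $\M\le\M$. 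Such compound data is exactly a $G$-invariant contraction $V^{\otimes[\mm'']}\to V^{\otimes[\ms]}$ with $\ms\le\M$ applied to $\sigma(\omega\otimes e_0^{\otimes\cdot})$, and hypothesis $\mathbf{1}$ (together with the fact that $Z$ is stable under appending $e_0$ and under $\GL_G(V)$, which is contained in $\G_\mm$) forces that image to lie in $Z_{\ms}$, hence $f_i$ vanishes on it. Therefore $\omega_\infty\in\Zinf$, so $(\omega_\infty)_\mm$ lies in the projective-limit fibre; by definition of $\Zinf$ as the limit of the $Z_\mm$, this gives $(\omega_\infty)_\mm=\omega\in Z_\mm$, which is $\mathbf{2}$.

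The step I expect to be the main obstacle is the bookkeeping in the previous paragraph: one must check carefully that every element of $\Ginf$, applied to $\omega_\infty$ and then projected to a finite level $\le\M$, really is realised by a \emph{single} $G$-invariant contraction of some $\sigma(\omega)$ of the kind allowed in $\mathbf{1}$, rather than a composite that slips outside the allowed class. In particular one has to handle the interaction between the $\GL_G(V)$-factors (which act on the $e_0$-padded slots as well) and the contraction along $\ten_0$: the cleanest way is to note that a contraction along $\phi'(\ten_0^{\otimes p})$ composed with projecting out slots filled by $e_0$ is itself contraction along a single $G$-invariant tensor, and that $\ms\le\M$ because $\mm'\ge\mm>\M$ while we only keep $\M$ slots; permutations $\sigma\in S_{\mm'}$ reduce to some $\sigma\in S_\mm$ on the relevant slots since all padded slots carry the same vector $e_0$. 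Granting this, Lemma~\ref{lem:ContractionPP_0} is what guarantees the bound $\M$ can be chosen uniformly in $\mm$, and Theorem~\ref{thm:YEqNoether} is what makes $N$ (and hence $\M$) finite in the first place.
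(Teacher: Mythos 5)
Your proposal is correct and takes essentially the same approach as the paper: pass to the projective limit $\Zinf$, use Theorem~\ref{thm:mainVII} to choose $\M$ from finitely many $\Ginf$-orbits of equations, pad $\omega$ with $e_0$ to get $\omega_\infty$, and check that each $\g f$ vanishes on $\omega_\infty$ by reducing to a single $G$-invariant contraction of $\sigma(\omega)$. The bookkeeping step you flag as the main obstacle is indeed the crux, and the paper resolves it exactly as you sketch, by folding the contraction of the $e_0$-padded slots into the contraction tensor (the paper's $\tilde\ten := e_0^{\otimes l}(\ten')$) so that hypothesis (1) applies directly, then re-appending $e_0^{\otimes \M-\ms}$ and using $\G_\M$-stability.
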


\begin{proof} The implication $2  \Rightarrow 1$ is trivial; we will show the implication $1 \Rightarrow 2$.
Let $\Zinf$ be the projective limit in $\Yinf$ of $Z_\mm$ for $\mm \in \m+\n\NN$.
By Theorem~\ref{thm:mainVII}, $\Zinf$ is defined (in $\Ainf$) by finitely many $\Ginf$-orbits of polynomials in $\Cinf$. This implies that there exists an $\M \in \m +\n\NN$ such that the $\Ginf$-orbits of the equations of $Z_{\M}$ define $\Zinf$. We claim that this value of $\M$ suffices for Theorem~\ref{thm:mainIX}, as well.

Indeed, suppose that $\omega \in V^{\otimes [\mm]}$ with $\mm \in \M + \n\NNp$ has the property that (for any rearrangement of its terms) all its $G$-equivariant contractions along tensors to $V^{\otimes [\ms]}$ lie in $Z_{\ms}$ and construct $\omega_\infty \in \Ainf$ as in the proof of Theorem~\ref{thm:mainVIII} (using $\iota^{\n}$ instead of $\iota$). We claim that $\omega_\infty$ lies in $\Zinf$. For this it suffices to show that for each $f$ in the ideal of $Z_{\M}$ and each $\g \in \Ginf$ the polynomial $\g f$ vanishes on $\omega_\infty$. Let $\g \in \Ginf$ and let $\mm' = \M+\p\n = \mm+\p'\n \in \mm+\n\NN$ be such that $\g \in \G_{\mm'}$. By construction, $f \in \coord_{\M}$ is identified with the function in $\coord_{\mm'}$ obtained by precomposing $f$ with the contraction
$V^{\otimes [\mm']} \to V^{\otimes [\M]}$ along the tensor $\ten_0^{\otimes
\p}$ on the last $\mm'-\M$ factors. Hence $\g f$ is the
same as contraction $V^{\otimes [\mm']} \to V^{\otimes [\M]}$ along {\em some}
$G$-invariant tensor (in some of the factors), followed by $\g' f$ for some $\g'
\in \G_{\M}$. Evaluating $\g f$ at the tensor $\omega_\infty$ is the same
as evaluating it at
\[ \omega \otimes (e_0)^{\otimes \p'\n}, \]
and boils down to contracting some, say $l$, of the factors $e_0$  and $\mm'-\M-l$ of the remaining factors $V$ along a tensor in $(V^*)^{\otimes I}$ (with $|I| = \mm'-\M$), and evaluating $\g' f$ at the result.

But this is the same thing as first applying some $\sigma \in S_{\mm}$ to $\omega$ (to ensure the right factors of $\omega$ will be contracted), then contracting $\sigma(\omega)\otimes e_0^{\otimes l} \in V^{\otimes [\mm+l]}$ to an element $\omega' \in V^{\otimes [\ms]}$ along some $G$-invariant tensor $\ten'$ in $(V^*)^{\otimes \mm+l-\ms}$ (where $\mm-\ms = |J|$) and evaluating $\g' f$ at $\sigma'(\omega' \otimes e_0^{\otimes \M-\ms})$ for some $\sigma' \in S_{\M}$. Note that $\sigma$ and $\sigma'$ are merely used to reorganise the terms of $\omega$ and $\omega' \otimes e_0^{\otimes \M-\ms}$ to avoid some cumbersome notation.

Viewing $e_0^{\otimes l}$ as a contraction from $(V^*)^{\otimes \mm+l-\ms} \to (V^*)^{\otimes \mm-\ms}$ in the natural way, we have $\tilde{\ten} :=  e_0^{\otimes l}(\ten') \in (V^*)^{\otimes \mm-\ms}$. Observe that $\omega' = \tilde{\ten}(\sigma(\omega))$ and that $\tilde{\ten}$ is $G$-invariant since both $\ten'$ and $e_0$ are $G$-invariant.

Now by assumption $\omega'$ lies in $Z_{\ms}$ (since $\ms \leq \M$), hence $\omega' \otimes e_0^{\otimes \M-\ms}$ lies in $Z_{\M}$ and hence we have $\sigma'(\omega' \otimes e_0^{\otimes \M-\ms}) \in Z_{\M}$ as well. This proves that $\g' f$ vanishes on it, so that $\g f$ vanishes on $\omega_\infty$, as claimed. Hence $\omega_\infty$ lies in $\Zinf$. But the projection $\Ainf \to V^{\otimes [\mm]}$ sends
$\omega_\infty$ to $\omega$ and $\Zinf$ to $Z_{\mm}$. Hence $\omega$ lies
in $Z_{\mm}$, as required.
\end{proof}

With these results, we can now prove our main theorems.

\begin{proof}[Proof of Main Theorem III]
By Lemma~\ref{lm:AllSame} it suffices to show that for fixed $k \in \NN$ and for $V = K[G]^n$ for some fixed $n \in \NN$ with $n > k$,
there exist $\M,\n$ such that a tensor in $V^{\otimes [\mm]},\ \mm \geq \M$, $\mm \in \m+\n\NN$
is of border rank at most $k$ as soon as all its $G$-equivariant contractions along $\mm-\ms$-tensors to $V^{\otimes [\ms]}$ have border rank at most $k$ (possibly after rearranging terms).

Recall that we defined $\ten_0$ using $x_{\chi} \in V_{\chi}^*$. Denoting the trivial character as $0$, note that $V_0^*$ is non-trivial since the sum of all basis elements of $V$ is $G$-invariant, so $x_0 \neq 0$. Moreover, $x_0$ vanishes outside of $V_0$, hence there must be an element $e_0 \in V_0$ such that $x_0(e_0) = 1$. For such $e_0$, observe that $\ten_0(e_0^{\otimes \n}) = 1$ and that $e_0$ is $G$-invariant because $V_0$ is the set of $G$-invariant elements of $V$. Now apply Theorem~\ref{thm:mainIX}.
\end{proof}

Our fourth Main Theorem requires a bit more work. We define a $G$-spaced star to be a $G$-spaced tree for which the underlying tree structure is that of a star.

\begin{lm} Let $\tre$ be a $G$-spaced star with center $\ro$ and leaves $[\mm]$. Let $I \subsetneq [\mm]$ and let $\tre'$ be the $G$-spaced star with center $\ro$ and leaves $[\mm]-I$ (and the same spaces attached to each vertex it shares with $\tre$). Let $\ten$ be a $G$-invariant tensor in $\otimes_{\ve\in I}V_i^*$. Then the map $\con: L(\tre) \to L(\tre')$ defined by $\otimes_{\ve \in [\mm]} \vect_{\ve} \mapsto \ten(\otimes_{\ve \in I}\vect_{\ve})\cdot \otimes_{\ve \in [\mm]-I}\vect_{\ve}$ maps $\eqm(\tre)$ to $\eqm(\tre')$.
\end{lm}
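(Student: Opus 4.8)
\medskip

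The plan is to work directly with the explicit parametrisation of a star model and to push the contraction $\con$ through it by absorbing it into the transition tensors. The only thing that gets in the way is a scalar factor, and the substance of the argument is that this scalar is constant along $G$-orbits of the central basis.

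Write $\ro$ for the centre of $\tre$, so a $G$-representation of $\tre$ amounts to a tuple $(A_{\ro i})_{i\in[\mm]}$ of $G$-invariant elements $A_{\ro i}\in V_\ro\otimes V_i$. For $b\in B_\ro$ let $\varepsilon_b:=(b\mid\cdot)_\ro\in V_\ro^*$; since $B_\ro$ is an orthonormal basis this is precisely the basis of $V_\ro^*$ dual to $B_\ro$. Setting $u_i^{(b)}:=(\varepsilon_b\otimes\id_{V_i})(A_{\ro i})\in V_i$ and unwinding the definition of $\Psi_\tre$ (for a star $\Int(\tre)=\{\ro\}$, so the contraction in the definition of $\Psi$ is along $\sum_{b\in B_\ro}(b\mid\cdot)^{\otimes[\mm]}$) one obtains
\[ \Psi_\tre\big((A_{\ro i})_i\big)=\sum_{b\in B_\ro}\ \bigotimes_{i\in[\mm]}u_i^{(b)}. \]
The first point to check is that $G$-invariance of $A_{\ro i}$ forces $u_i^{(gb)}=g\cdot u_i^{(b)}$ for every $g\in G$. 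Indeed $\varepsilon_{gb}=g\varepsilon_b$ because the form $(\cdot\mid\cdot)_\ro$ is $G$-invariant ($B_\ro$ being a $G$-stable orthonormal basis), whence $u_i^{(gb)}=((g\varepsilon_b)\otimes\id)(A_{\ro i})=(\varepsilon_b\otimes\id)\big((g^{-1}\otimes\id)(A_{\ro i})\big)=(\varepsilon_b\otimes\id)\big((\id\otimes g)(A_{\ro i})\big)=g\cdot u_i^{(b)}$, the middle equality using $(g^{-1}\otimes\id)(A_{\ro i})=(g^{-1}\otimes\id)(g\otimes g)(A_{\ro i})=(\id\otimes g)(A_{\ro i})$.

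Now apply the contraction: by definition $\con\big(\Psi_\tre((A_{\ro i})_i)\big)=\sum_{b\in B_\ro}c_b\,\bigotimes_{i\in[\mm]-I}u_i^{(b)}$, where $c_b:=\ten\big(\bigotimes_{i\in I}u_i^{(b)}\big)\in K$. The second point is that $c_b$ depends only on the $G$-orbit of $b$: using $u_i^{(gb)}=g u_i^{(b)}$ and then $G$-invariance of $\ten$ gives $c_{gb}=\ten\big(g\cdot\bigotimes_{i\in I}u_i^{(b)}\big)=(g^{-1}\ten)\big(\bigotimes_{i\in I}u_i^{(b)}\big)=c_b$. Choose a leaf $i_0\in[\mm]-I$, which exists because $I\subsetneq[\mm]$, and put $\tilde u_{i_0}^{(b)}:=c_b\,u_{i_0}^{(b)}$ and $\tilde u_i^{(b)}:=u_i^{(b)}$ for the remaining $i\in[\mm]-I$. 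Since $c_{gb}=c_b$, the family $\big(\tilde u_i^{(b)}\big)_{b\in B_\ro}$ still satisfies $\tilde u_i^{(gb)}=g\tilde u_i^{(b)}$, so $A'_{\ro i}:=\sum_{b\in B_\ro}b\otimes\tilde u_i^{(b)}$ is a $G$-invariant element of $V_\ro\otimes V_i$; thus $(A'_{\ro i})_{i\in[\mm]-I}$ lies in $\rep(\tre')$. Feeding it into $\Psi_{\tre'}$ and using $\varepsilon_b(b')=\delta_{b,b'}$ one recovers $\Psi_{\tre'}\big((A'_{\ro i})_i\big)=\sum_{b\in B_\ro}\bigotimes_{i\in[\mm]-I}\tilde u_i^{(b)}=\con\big(\Psi_\tre((A_{\ro i})_i)\big)$. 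Hence $\con\big(\Psi_\tre(\rep(\tre))\big)\subseteq\Psi_{\tre'}(\rep(\tre'))\subseteq\eqm(\tre')$.

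Finally, $\con$ is a linear map and hence continuous for the Zariski topology, so
\[ \con\big(\eqm(\tre)\big)=\con\big(\overline{\Psi_\tre(\rep(\tre))}\big)\subseteq\overline{\con\big(\Psi_\tre(\rep(\tre))\big)}\subseteq\overline{\eqm(\tre')}=\eqm(\tre'), \]
which is the assertion. The only genuine obstacle is the scalar $c_b$: one cannot simply discard the contracted leaves, and the argument turns on the observation that these scalars are $G$-orbit-constant, so that they can be absorbed into a single uncontracted factor without spoiling $G$-invariance — and the hypothesis $I\subsetneq[\mm]$ is exactly what guarantees that such a factor is available.
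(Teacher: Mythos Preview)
Your proof is correct and follows essentially the same approach as the paper: both write $A_{\ro i}=\sum_b b\otimes u_i^{(b)}$, deduce $u_i^{(gb)}=g\,u_i^{(b)}$ from $G$-invariance of $A_{\ro i}$, show the contraction scalars $c_b$ are constant on $G$-orbits using $G$-invariance of $\ten$, and absorb $c_b$ into one remaining leaf to produce an element of $\rep(\tre')$. Your write-up is slightly more explicit about the closure step and the role of the hypothesis $I\subsetneq[\mm]$, but the argument is the same.
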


\begin{proof} We show that $\con(\eqmo(\tre)) \subseteq \eqmo(\tre')$. Assume without loss of generality that $I = \{\ms,\ldots,\mm-1\}$. Let $A = (A_{\ro\ve})_{\ve \sim \ro} \in \rep(\tre)$. Write $A_{\ro\ve} = \sum_{b \in B_\ro}b\otimes \vect_{b,\ve}$ for any $\ve \in \lea(\tre)$. Note that $gA_{\ro\ve} = \sum_{b \in B_\ro}(gb) \otimes (g\vect_{b,\ve}) = \sum_{b \in B_\ro}b \otimes (g\vect_{g^{-1}b,\ve})$. Since $A_{\ro\ve}$ is $G$-invariant, we find that $g^{-1}\vect_{b,\ve} = \vect_{g^{-1}b,\ve}$ for any $b \in B_\ro$, $g \in G$ and $\ve \in \lea(\tre)$.

Then we have $\con(\Psi_{\tre}(A)) = \sum_{b \in B_\ro} \ten(\otimes_{p \in I}\vect_{b,\ve}) \cdot \otimes_{\ve \in [\ms]} \vect_{b,\ve}$. Let $c_b := \ten(\otimes_{\ve \in I}\vect_{b,\ve})$. Observe that we now have $c_b = (g \ten)(\otimes_{\ve \in I}\vect_{b,\ve}) = \ten(g^{-1}\otimes_{p \in I}\vect_{b,\ve}) = \ten(\otimes_{\ve \in I}\vect_{g^{-1}b,\ve}) = c_{g^{-1}b}$ for any $g \in G$.

For $\ve \in [\ms-1]$, define $A'_{\ro\ve} = A_{\ro\ve}$ and define $A'_{\ro \ms} = \sum_{b \in B_\ro}b \otimes c_b\vect_{b,\ms}$. Observe that $A'_{\ro\ve}$ is $G$-invariant for each each $\ve \in [\ms]$, using $g^{-1}c_b\vect_{b,\ms} = c_b\vect_{g^{-1}b,\ms} = c_{g^{-1}b}\vect_{g^{-1}b,\ms}$ for any $g \in G, b \in B_\ro$. This means $A' := \rep(\tre')$. We now easily see that $\Psi_{\tre'}(A') = \con(\Psi_{\tre}(A))$, which after taking the closure concludes the proof.
\end{proof}

Suppose $V$ has a distinguished basis $B$ such that $G$ acts on $B$. It is easily seen that for a $G$-spaced star $\tre$ with center $\ro$, leaves $[\mm]$ and such that $V_{\ve} = V$ for each $\ve \in [\mm]$, one has $\eqm(\tre)$ is $\G_\mm$-stable. From now on, assume that $V$ has a distinguished basis $B$ such that $G$ acts on $B$.

Now, for $\mm \in \NN$, let $\tre_\mm$ be a $G$-spaced star with center $\ro$ with space $V_\ro$ and base $B_\ro$ of cardinality $k$, leaves $[\mm]$, and such that $V_{\ve} = V$ for each $\ve \in [\mm]$. Denote $\eqm_{\mm} = \eqm(\tre_\mm)$. Observe that $\eqm_{\mm}$ consists of tensors of rank at most $k$, hence $\eqm_{\mm} \subseteq \Yp{[\mm]}$. Fix $\m \in \NN$. We can now define $\eqminf \subseteq \Yinf \subseteq \Ainf$ as the projective limit of the $\eqm_{\mm}$ with $\mm \in \m+\n\NN$. This is the {\em infinite star model} alluded to in the introduction.

\begin{prop} \label{prop:mainIV} For any fixed space $V_\ro$ with basis $B_\ro$, the set $\eqminf$ is the common zero sets of finitely many $\Ginf$-orbits of polynomials in $\Cinf$.
\end{prop}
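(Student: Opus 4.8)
The plan is to obtain this as an immediate application of Theorem~\ref{thm:mainVII}: since that theorem describes every closed $\Ginf$-stable subset of $\Yinf$ by finitely many $\Ginf$-orbits of polynomials, it suffices to verify that $\eqminf$ is a closed, $\Ginf$-stable subset of $\Yinf$. The inclusion $\eqminf \subseteq \Yinf$ is already recorded above: every element of $\eqm_\mm$ has rank at most $k$, hence lies in $\Yp{[\mm]}$, and so the projective limit $\eqminf$ lies in the projective limit $\Yinf$.

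For closedness I would first check that the projective system underlying $\eqminf$ is compatible with the structure maps $\con_0$. This is exactly the content of the contraction lemma for $G$-spaced stars proved just above, applied to the leaf set $I=\{\mm,\ldots,\mm+\n-1\}$ and the $G$-invariant tensor $\ten_0 \in (V^*)^{\otimes \n}$: it gives $\con_0(\eqm_{\mm+\n}) \subseteq \eqm_\mm$ for every $\mm \in \m+\n\NN$. Now each $\eqm_\mm = \overline{\Psi_{\tre_\mm}(\rep(\tre_\mm))}$ is by definition Zariski closed in $V^{\otimes[\mm]}$, and the canonical projection $\pi_\mm \colon \Ainf \to V^{\otimes[\mm]}$ is continuous, being dual to the algebra inclusion $\coord_\mm \hookrightarrow \Cinf$. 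Hence $\pi_\mm^{-1}(\eqm_\mm)$ is closed in $\Ainf$, and by construction $\eqminf = \bigcap_{\mm \in \m+\n\NN} \pi_\mm^{-1}(\eqm_\mm)$ is an intersection of closed sets, hence closed.

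For $\Ginf$-stability I would spell out the earlier remark that each $\eqm_\mm$ is $\G_\mm$-stable. The factor $S_\mm$ is harmless: permuting the leaves of the star $\tre_\mm$ carries $\rep(\tre_\mm)$ to itself and intertwines the parameterisation $\Psi_{\tre_\mm}$, so it preserves the image and its closure. For the factor $\GL_G(V)^\mm$ one uses that it acts on the leaf tensor factors only, hence commutes with the contraction at the root occurring in $\Psi_{\tre_\mm}$; and if $A=(A_{\ro\ve})_\ve \in \rep(\tre_\mm)$ and $\phi=(\phi_\ve)_\ve \in \GL_G(V)^\mm$, then $((\id_{V_\ro}\otimes\phi_\ve)A_{\ro\ve})_\ve$ is again a tuple of $G$-invariant tensors (here one uses that each $\phi_\ve$ is $G$-equivariant), i.e.\ lies in $\rep(\tre_\mm)$, and $\Psi_{\tre_\mm}$ of it equals $\phi$ applied to $\Psi_{\tre_\mm}(A)$. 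So $\Psi_{\tre_\mm}(\rep(\tre_\mm))$, and therefore $\eqm_\mm$, is $\GL_G(V)^\mm$-stable. Since the embeddings $\GL_G(V)^\mm \hookrightarrow \GL_G(V)^{\mm+\n}$ and $S_\mm \hookrightarrow S_{\mm+\n}$ were defined precisely so as to commute with $\con_0$, the $\G_\mm$-actions pass to the limit and $\eqminf$ is stable under $\Ginf = \bigcup_{\mm}\G_\mm$. An application of Theorem~\ref{thm:mainVII} then finishes the proof.

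The substantive work here has in fact already been done in the earlier sections; the present proposition is essentially a bookkeeping step. The only point requiring genuine (if mild) care is the $\GL_G(V)^\mm$-equivariance of the star parameterisation---that a $G$-equivariant change of basis on a leaf space sends an admissible tuple of transition tensors to another admissible one---so I would expect that to be the single place where one must be attentive rather than formal.
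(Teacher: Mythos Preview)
Your proposal is correct and takes essentially the same approach as the paper: the paper's proof is the single line ``As $\eqminf$ is a closed $\Ginf$-stable subset of $\Yinf$ (with $k = |B_\ro|$) one can apply Theorem~\ref{thm:mainVII},'' and you have simply unpacked the verifications of closedness and $\Ginf$-stability that the paper leaves implicit (and which are indeed recorded in the surrounding text). There is no substantive difference.
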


\begin{proof}
As $\eqminf$ is a closed $\Ginf$-stable subset of $\Yinf$ (with $k = |B_\ro|$) one can apply Theorem~\ref{thm:mainVII}.
\end{proof}

Now, we will see how we can reduce from a star with arbitrary spaces attached to the leaves to a star for which each leaf has space $V$ attached. This is the analogue of Lemma~\ref{lm:AllSame} for star models.

\begin{lem} \label{lm:LeafSame}
Let $\mm \in \NN$ and suppose $\tre$ is a $G$-spaced star with center $\ro$, with space $V_\ro$ and base $B_\ro$ of cardinality $k$, and leaves $[\mm]$, with spaces $V_\ve$ for each $\ve \in [\mm]$. Let $V = K[G]^n$ for some $n \in \NN$ with $n > k$ and let $B = \{gf_i: g \in G, f_i$ is the $i$-th standard basis vector of $K[G]^n$ viewed as a $K[G]$-module$\}$. If $\eqm_{\mm}$ is defined by polynomials of degree at most $\degr$, then so is $\eqm(\tre)$.
\end{lem}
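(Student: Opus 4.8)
The plan is to realise $\eqm(\tre)$ explicitly as the common zero locus of the pull-backs, along linear maps, of the defining equations of $\eqm_{\mm}$. For any tuple of $G$-linear maps $\phi_{\ve}\colon V_{\ve}\to V$ ($\ve\in[\mm]$) we obtain a linear map $\phi_{[\mm]}\colon L(\tre)\to L(\tre_{\mm})=V^{\otimes[\mm]}$, and for any polynomial $f$ of degree at most $\degr$ vanishing on $\eqm_{\mm}$ the pull-back $f\circ\phi_{[\mm]}$ is a polynomial of degree at most $\degr$ on $L(\tre)$. I claim that $\eqm(\tre)$ is exactly the common zero set of all the $f\circ\phi_{[\mm]}$ obtained in this way; since these all have degree at most $\degr$, the lemma follows at once (and finitely many of them suffice, since the relevant ideal is spanned over $K$ by its elements of degree at most $\degr$ together with a degree argument).

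The first ingredient is the observation that post-composition with a $G$-linear map preserves equivariant star models. Precisely: if $\tre'$ is the $G$-spaced star with the same centre $(V_{\ro},B_{\ro})$ as $\tre$ but with leaf spaces $W_{\ve}$, and $\rho_{\ve}\colon V_{\ve}\to W_{\ve}$ are $G$-linear, then $\rho_{[\mm]}$ maps $\eqm(\tre)$ into $\eqm(\tre')$. This is read off from the description of $\eqmo$ on a star: for $A=(A_{\ro\ve})_{\ve}\in\rep(\tre)$ write $A_{\ro\ve}=\sum_{b\in B_{\ro}}b\otimes\vect_{b,\ve}$, so that $\eqmo_{\tre}(A)=\sum_{b\in B_{\ro}}\bigotimes_{\ve}\vect_{b,\ve}$ and $G$-invariance of $A_{\ro\ve}$ reads $g\vect_{b,\ve}=\vect_{gb,\ve}$; applying $\rho_{[\mm]}$ gives $\sum_{b}\bigotimes_{\ve}\rho_{\ve}(\vect_{b,\ve})=\eqmo_{\tre'}(A')$ where $A'_{\ro\ve}=\sum_{b}b\otimes\rho_{\ve}(\vect_{b,\ve})$, and this $A'$ lies in $\rep(\tre')$ precisely because each $\rho_{\ve}$ is $G$-linear. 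Passing to Zariski closures proves $\rho_{[\mm]}(\eqm(\tre))\subseteq\eqm(\tre')$. Taking $W_{\ve}=V$ for all $\ve$, we get $\phi_{[\mm]}(\omega)\in\eqm_{\mm}$ for every $\omega\in\eqm(\tre)$ and every tuple $\phi$, so every $f\circ\phi_{[\mm]}$ vanishes on $\eqm(\tre)$; this is one inclusion of the claim.

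For the reverse inclusion I would use Lemma~\ref{lm:AllSame}. Suppose $\omega\in L(\tre)$ lies in the common zero set; since by hypothesis $\eqm_{\mm}$ is cut out by polynomials of degree at most $\degr$, this is equivalent to saying that $\phi_{[\mm]}(\omega)\in\eqm_{\mm}$ for every tuple of $G$-linear maps $\phi_{\ve}\colon V_{\ve}\to V=K[G]^{n}$. Since every element of $\eqm_{\mm}$ has border rank at most $k$, so does $\phi_{[\mm]}(\omega)$ for all such $\phi$, and the first part of Lemma~\ref{lm:AllSame} (applicable since $n>k$) then shows $\omega$ has border rank at most $k$. By the second part of Lemma~\ref{lm:AllSame} there are $G$-linear maps $\phi_{\ve}\colon V_{\ve}\to K[G]^{k}$ and $\psi_{\ve}\colon K[G]^{k}\to V_{\ve}$ with $\psi_{[\mm]}(\phi_{[\mm]}(\omega))=\omega$. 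Composing each $\phi_{\ve}$ with the standard $G$-linear inclusion $K[G]^{k}\hookrightarrow K[G]^{n}=V$, the hypothesis gives $\phi_{[\mm]}(\omega)\in\eqm_{\mm}$. Extend each $\psi_{\ve}$ $G$-linearly to $\widetilde{\psi}_{\ve}\colon V\to V_{\ve}$, say by zero on the complementary summand $K[G]^{n-k}$, so that $\widetilde{\psi}_{[\mm]}$ agrees with $\psi_{[\mm]}$ on $(K[G]^{k})^{\otimes[\mm]}$. By the first ingredient, applied with $\tre'=\tre$, the map $\widetilde{\psi}_{[\mm]}$ sends $\eqm_{\mm}=\eqm(\tre_{\mm})$ into $\eqm(\tre)$; hence $\omega=\widetilde{\psi}_{[\mm]}(\phi_{[\mm]}(\omega))\in\eqm(\tre)$, proving the claim.

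The step I expect to be the crux is the bookkeeping in the last paragraph with the chain of $G$-equivariant maps $V_{\ve}\to K[G]^{k}\hookrightarrow K[G]^{n}=V\to V_{\ve}$: one must check that the sections $\psi_{\ve}$ supplied by Lemma~\ref{lm:AllSame} extend to genuinely $G$-equivariant maps on all of $V$, and that composing with any such map lands one back inside the star model $\eqm(\tre)$ and not in some larger variety. This is exactly what the post-composition observation guarantees, and it is where the inequality $n>k$ and the $G$-stable basis $B=\{gf_{i}\}$ of $V=K[G]^{n}$ enter.
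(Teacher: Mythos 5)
Your proof is correct and follows essentially the same approach as the paper's: both hinge on the two-sided use of Lemma~\ref{lm:AllSame} to produce a pair $\phi_{[\mm]}, \psi_{[\mm]}$ with $\psi_{[\mm]}\circ\phi_{[\mm]}(\omega)=\omega$ and on the fact that $G$-equivariant leaf-space maps preserve equivariant star models. The paper argues by contrapositive (given $\omega\notin\eqm(\tre)$, produce a $\phi$ with $\phi_{[\mm]}(\omega)\notin\eqm_{\mm}$) whereas you argue directly, and you spell out the ``post-composition preserves star models'' observation, which the paper uses silently in the line ``Note that such $\phi_{[\mm]}$ maps $\eqm(\tre)$ to $\eqm_{\mm}$'' and in $\psi_{[\mm]}(\eqm_{\mm})\subseteq\eqm(\tre)$; otherwise the arguments coincide.
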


\begin{proof} We have $\eqm(\tre)$ is contained in $L(\tre) = \bigotimes_{\ve \in [\mm]}V_\ve$ and $\eqm_{\mm}$ is contained in $L(\tre_\mm) = \bigotimes_{\ve \in [\mm]}V$. Recall that $\eqm(\tre)$ is the Zariski closure of $\eqmo(\tre)$ and $\eqm_{\mm}$ is the closure of the image of $\eqmo(\tre_\mm)$. A generic element of $\eqmo(\tre)$ is of the form $\sum_{b \in B_\ro}\otimes_{\ve \in [\mm]}\vect_{\ve,b}$ with $\sum_{b \in B_\ro}b \otimes \vect_{\ve,b}$ a $G$-invariant element of $V_\ro \otimes V_\ve$ for each leaf $\ve$. From this, we can easily conclude that any element of $\eqm(\tre)$ has border rank at most $k$. Likewise, any element of $\eqm_{\mm}$ has border rank at most $k$.

Suppose $\omega \in L(\tre) - \eqm(\tre)$. We show that there is an $\mm$-tuple of $G$-linear maps $\phi_\ve:V_\ve \to V$ such that $\phi_{[\mm]}(\omega) \not\in \eqm_{\mm}$. Note that such a $\phi_{[\mm]}$ maps $\eqm(\tre)$ to $\eqm_{\mm}$.
If this is the case, then we can immediately conclude that there is $f \in \coord_{L(\tre_\mm)}$ of degree at most $\degr$ that vanishes on $\eqm_{\mm}$ but not on $\phi_{[\mm]}(\omega)$, hence $\phi_{[\mm]}^*(f) \in \coord_{L(\tre)}$ has degree at most $\degr$, vanishes on $\eqm(\tre)$ and does not vanish on $\omega$. Hence $\eqm(\tre)$ is defined by polynomials of degree at most $\degr$.

If $\omega$ has border rank at most $k$, then by Lemma~\ref{lm:AllSame}, we can find $\mm$-tuples of $G$-linear maps $\phi_\ve:V_\ve \to V$ and $\psi_\ve: V \to V_\ve$ such that $\psi_{[\mm]}(\phi_{[\mm]}(\omega)) = \omega$. Since $\psi_{[\mm]}(\phi_{[\mm]}(\omega)) \not\in \eqm(\tre)$ by assumption (and $\psi_{[\mm]}(\eqm_{\mm}) \subseteq \eqm(\tre)$), we can conclude that $\phi_{[\mm]}(\omega) \not\in \eqm_{\mm}$.

If $\omega$ has border rank exceeding $k$, then by Lemma~\ref{lm:AllSame}, there is an $\mm$-tuple of $G$-linear maps $\phi_i:V_i \to V$ such that $\phi_{[\mm]}(\omega)$ has border rank exceeding $k$, which implies $\phi_{[\mm]}(\omega) \not\in \eqm_{\mm}$.
\end{proof}

\begin{re}

\begin{description}
\item[1] We may in fact assume $n = k$; in this case, we first test
whether some flattening of $\omega$ has rank exceeding $k$; this can
be done by equations of degree $k+1$. If not, then we can find $\mm$-tuples of $G$-linear maps $\phi_\ve:V_\ve \to V$ and $\psi_\ve: V \to V_\ve$ such that $\psi_{[\mm]}(\phi_{[\mm]}(\omega)) = \omega$ and proceed with the proof as above.
\item[2] If $V_{\ro}$ has multiplicity $k_{\chi}$ for each irreducible
representation $\chi$, then we may use $V = K[G]^{\max_{\chi}\{k_{\chi}\}}$ instead of $K[G]^n$. In fact, we may use $V = V_{\ro}$, using the fact that because of the given basis of $V$, we have $k_{\chi} = k_{\chi^{-1}}$ for each $\chi$.

    Moreover, observe that we have $\eqm_{\mm} \subseteq \Yp[(k_\chi)_\chi]{[\mm]}$.
\end{description}

\end{re}

\begin{ex} If $B = G$, then we have $K[G] \cong \bigoplus_{\chi \in
\widehat{G}}\chi$ (identifying characters and irreducible
representations in the natural way), and hence if $V_{\ro} = K[G]$,
then we have $\eqm_{\mm} \subseteq \Yp[(1)_\chi]{[\mm]}$.
\end{ex}

We now show that the (Zariski closure of the) equivariant model for a $G$-spaced star is defined in bounded degree, given a bound on the cardinality of the basis of the center of the star. After we show this, we can finally prove Main Theorem IV.

\begin{thm} \label{thm:mainV}
Let $V_\ro$ be a $G$-module with basis $B_\ro$ of cardinality $k \in \NN$. Then there exists $\degr \in \NN$ such that for each $\mm \in \NN$ and each $G$-spaced star $\tre$ with center $\ro$ with leaves $[\mm]$, one has $\eqm(\tre)$ is defined by the vanishing of a number of polynomials of degree at most $\degr$.
\end{thm}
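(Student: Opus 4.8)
The plan is to reduce, by Lemma~\ref{lm:LeafSame}, to stars all of whose leaves carry the fixed representation $V$, and then to bound the degrees of the defining equations of the ``uniform'' star models $\eqm_\mm$ uniformly in $\mm$ by applying Theorem~\ref{thm:mainVIII} to the infinite star model, carried out once for each residue class modulo $\n$.

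Concretely, I would first fix $n>k$, put $V=K[G]^n$ with the basis $B$ permuted by $G$ as in Lemma~\ref{lm:LeafSame}, and note that by that lemma it suffices to produce a single $\degr\in\NN$ with the property that, for every $\mm\in\NN$, the variety $\eqm_\mm$ is cut out by polynomials of degree at most $\degr$; the bound for an arbitrary $G$-spaced star $\tre$ with centre $\ro$, base $B_\ro$ of cardinality $k$, and $\mm$ leaves then follows at once.

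To obtain such a uniform $\degr$, keep $\n=|G|$ and the $G$-invariant tensor $\ten_0$ as fixed in Section~\ref{sec:Infinite}, and recall (as in the proof of Main Theorem~III) that the basis of $V$ dual to the eigenbasis $x_0,\dots,x_{\di-1}$ contains a $G$-invariant vector $v$ with $\ten_0(v^{\otimes\n})=1$. Fix a residue $\m\in[\n]$ and set $Z_\mm:=\eqm_\mm$ for $\mm\in\m+\n\NN$, with projective limit $\eqminf\subseteq\Yinf$. I would then check the hypotheses of Theorem~\ref{thm:mainVIII}: each $\eqm_\mm$ is a $\G_\mm$-stable closed subset of $\Yp{[\mm]}$ (as observed above, since its points have border rank at most $k$); the contraction $\con_0$ maps $\eqm_{\mm+\n}$ into $\eqm_\mm$ because $\ten_0$ is $G$-invariant and contraction along a $G$-invariant tensor sends the equivariant model of a star into that of the star with the corresponding leaves removed (the unnumbered lemma on contractions of star models proved above); and, taking $e_0:=v^{\otimes\n}\in V^{\otimes[\n]}$, the inclusion $\iota\colon\omega\mapsto\omega\otimes e_0$ maps $\eqm_\mm$ into $\eqm_{\mm+\n}$ while $\con_0\circ\iota=\id$ because $\ten_0(e_0)=1$. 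Theorem~\ref{thm:mainVIII} then yields a $\degr_\m$ bounding the degrees of equations of all $\eqm_\mm$ with $\mm\equiv\m\pmod\n$; since the sets $\m+\n\NN$ for $\m\in[\n]$ exhaust $\NN$, the number $\degr:=\max_{\m\in[\n]}\degr_\m$ works for every $\mm$, and together with the reduction step this proves the theorem.

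The step that I expect to need genuine care is the inclusion $\iota(\eqm_\mm)\subseteq\eqm_{\mm+\n}$; the remaining hypotheses are quoted from earlier results or amount to short computations. By continuity it suffices to verify the inclusion on the image $\Psi(\rep(\tre_\mm))$, which is dense in $\eqm_\mm$. So let $A\in\rep(\tre_\mm)$, and write, as in the proof of the star contraction lemma, $A_{\ro\ve}=\sum_{b\in B_\ro}b\otimes\vect_{b,\ve}$, so that $\Psi_{\tre_\mm}(A)=\sum_{b\in B_\ro}\bigotimes_{\ve<\mm}\vect_{b,\ve}$. I would extend $A$ to a collection on $\tre_{\mm+\n}$ by attaching to each of the $\n$ new leaves the tensor $\big(\sum_{b\in B_\ro}b\big)\otimes v\in V_\ro\otimes V$; this tensor is $G$-invariant because $G$ permutes $B_\ro$ and fixes $v$, so it defines an element of $\rep(\tre_{\mm+\n})$. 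Computing $\Psi_{\tre_{\mm+\n}}$ on the extended collection---the central contraction along $\sum_{b}(b\mid\cdot)^{\otimes(\mm+\n)}$ again collapses all legs to one common index $b$---gives precisely $\big(\sum_{b\in B_\ro}\bigotimes_{\ve<\mm}\vect_{b,\ve}\big)\otimes v^{\otimes\n}=\Psi_{\tre_\mm}(A)\otimes e_0$, which is exactly what is needed. Once this identity is established the remaining verifications are routine, and the theorem follows along the lines above.
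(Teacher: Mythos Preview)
Your proposal is correct and follows essentially the same approach as the paper: reduce via Lemma~\ref{lm:LeafSame} to the uniform case $V=K[G]^n$, pick a $G$-invariant basis vector $v$ with $\ten_0(v^{\otimes\n})=1$, set $e_0=v^{\otimes\n}$, verify that $\iota\colon\omega\mapsto\omega\otimes e_0$ maps $\eqm_\mm$ into $\eqm_{\mm+\n}$ by extending $A$ with $(\sum_{b\in B_\ro}b)\otimes v$ on the new leaves, and apply Theorem~\ref{thm:mainVIII}. Your write-up is in fact slightly more explicit than the paper's in two respects: you spell out that one applies Theorem~\ref{thm:mainVIII} separately for each residue class $\m\in[\n]$ and takes the maximum $\degr_\m$, and you explicitly invoke the star contraction lemma to justify $\con_0(\eqm_{\mm+\n})\subseteq\eqm_\mm$, both of which the paper leaves implicit.
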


\begin{proof}
By Lemma~\ref{lm:LeafSame} it suffices to prove that for fixed $k \in \NN$ and $V = K[G]^n$ with $n > k$, there exists a $\degr \in \NN$ such that for all $\m \in [\n]$ and for all $\mm \in\m+\n\NN$ the variety $\eqm_{\mm}$ is defined in $V^{\otimes [\mm]}$ by polynomials of degree at most $\degr$.

As in the proof of Main Theorem III, observe that there is some $G$-invariant element $e_0$ such that $\con_0(e_0^{\otimes \n}) = 1$. Let $\epsilon_0 = e_0^{\otimes \n}$.

Consider the inclusion maps
\[ \iota: V^{\otimes \mm} \to V^{\otimes \mm+\n},\ \omega \mapsto \omega
\otimes \epsilon_0. \]
Observe that $\iota(\Psi_{\tre_{\mm}}(A)) = \Psi_{\tre_{\mm+\n}}(A')$ where $A'_{\ro\ve} := A_{\ro\ve}$ if $\ve \in [\mm]$ and $A'_{\ro\ve} = (\sum_{b \in B_\ro}b)\otimes e_0$ otherwise. Moreover, each $A'_{\ro\ve}$ is $G$-invariant.

Hence this map sends $\eqm_{\mm}$ into $\eqm_{\mm+\n}$ and we easily see that it satisfies $\con_0 \circ \iota =\id_{V^{\otimes [\mm]}}$.

Thus we can apply Theorem~\ref{thm:mainVIII}.
\end{proof}

\begin{ex} \label{ex:Z/2Z} Let $B = G = \ZZ/2\ZZ$ and let $\tre$ be a $G$-spaced tree with $\mm$ leaves with space $V = K[G]$ attached to each node. Let $y_0,y_1 \in V^*$ be a basis dual to the basis $e+g$, $e-g$ of $V$.

Using the proof in \cite{Sturmfels05b} that the group-based
model for $\ZZ/2\ZZ$ is defined by linear and quadratic
polynomials, we can show that $\eqm_{\mm}$ is defined by the
$\Ginf$-orbits of $\tc_{\mm,\wo}$ where the cardinality of $\{i
\in [\mm]: \wo_i = 1\}$ is odd and by the $\Ginf$-orbits of
$\tc_{\mm,\wo_0}\tc_{\mm,\wo_1}-\tc_{\mm,\wo_2}\tc_{\mm,\wo_3}$ such that:

\begin{description}
\item[a] For each $i \in [\mm]$, the multiset $\{(\wo_0)_i, (\wo_1)_i\}$ equals the multiset $\{(\wo_2)_i, (\wo_3)_i\}$.
\item[b] For each $j \in \{1,2,3,4\}$, the cardinality of $\{i \in [\mm]: (\wo_j)_i = 1\}$ is even.
\end{description}

Note the similarity with Example~\ref{ex:Yinf[(1,1)]}. Indeed, these equations all vanish on $\Yp[(1,1)]{[\mm]}$, hence we have $\Yp[(1,1)]{[\mm]} \subseteq \eqm_{\mm}$ and therefore $\Yp[(1,1)]{[\mm]}$ is in fact equal to $\eqm_{\mm}$ in this specific case. By Example~\ref{ex:Yinf[(1,1)]}, we find that we can take $\M = 8$ in Theorem~\ref{thm:mainIX}. A more precise examination shows that we may take $\M = 5$ in this case.
\end{ex}

\begin{proof}[Proof of Main Theorem IV] Let $\tre$ be a $G$-spaced tree (over an algebraically closed field of characteristic $0$) satisfying the conditions of the theorem. By Theorem $1.7$ in \cite{Draisma07b}, one has $I(\eqm(\tre)) = \sum_{\ro \in \ver(\tre)}I(\eqm(\flat_\ro\tre))$ where $\flat_\ro\tre$ is a $G$-spaced star with center $\ro$. From this, we can easily conclude that if $\eqm(\flat_\ro\tre)$ is defined by polynomials of degree at most $\degr$ for each $\ro$, then so is $\eqm(\tre)$. Now apply Theorem~\ref{thm:mainV}.
\end{proof}

\begin{re} The proof of this theorem, along with the previous remark, shows that to describe the equations that define the equivariant model for any $G$-spaced tree, it suffices to describe the equations that define the equivariant model for any $G$-spaced star for which all nodes have the same space attached.
\end{re}

\begin{proof}[Proof of Main Theorem I] For the field $K = \CC$, by Main Theorem IV there is $\degr \in \NN$ depending on $G$ and $k = |B|$ such that $\eqm(\tre)$ is defined by polynomials of degree at most $\degr$. The tensorification of the model in the introduction is the closure of the set tensors of the form $\Psi(A)$ with $A \in \rep(\tre)$ such that $A$ satisfies an additional set of linear equalities and inequalities (certain sums must be equal to $1$ and certain coefficients must be non-negative). Since $\Psi$ is linear, these translate to linear equalities and inequalities for $\Psi(A)$. Then clearly, the closure of the set of tensors of the form $\Psi(A)$ with $A \in \rep(\tre)$ such that $A$ satisfies the linear {\em equalities} mentioned is defined by polynomials of degree at most $\max(\degr,1)$, since linear equalities can be tested by linear polynomials. The latter however equals the closure of the set of tensors of the form $\Psi(A)$ with $A \in \rep(\tre)$ such that $A$ satisfies both the linear equalities and the inequalities. Hence the tensorification of the model in the introduction is defined by polynomial equations of degree at most $\max(\degr,1)$.
\end{proof}

\begin{proof}[Proof of Main Theorem II] Let $\omega \in L(\tre)$. We
will first test whether $\omega \in \eqm(\tre)$; after that, we can
verify whether $\omega$ satisfies the additional linear equalities
mentioned in Main Theorem I. For each vertex $\ro$, view $\omega$ as
an element of $\flat_\ro(\tre)$; say $\flat_\ro(\tre)$ has leaves
$[\mm]$ and space $V_{\ve}$ for each $\ve \in [\mm]$. Use the
construction of Lemma~\ref{lm:AllSame} to produce
$\phi_{[\mm]},\psi_{[\mm]}$ such that
$\psi_{[\mm]}(\phi_{[\mm]}(\omega)) = \omega$, where $\phi_{\ve}:
V_{\ve} \to V = K[G]^{|B|+1}$. If some flattening of $\omega$ occuring
in the construction has image of rank exceeding $k = |B|$, then
conclude that $\omega \not\in \eqm(\tre)$.

Consider $\omega' = \phi_{[\mm]}(\omega)$. Take $\M$ as in Theorem~\ref{thm:mainIX}. Let $I$ be a subset of $[\mm]$ of cardinality $\p\n$ with $\mm-\p\n \leq \M$; the number of such subsets is polynomial in $\mm$ (it is $O(\mm^{\M})$).

Take a basis $\ten_1,\ldots,\ten_N$ of $G$-invariant tensors in
$(V^*)^{\otimes I}$; let $f_1$,\ldots,$f_{N'}$ be a set of polynomials
that defines $\eqm(\tre_{[\mm]-I})$. We can symbolically describe the
composition of a contraction of $\omega'$ along the formal linear
combination $\sum x_i\ten_i$ with some $f_j$ as a polynomial and test
whether this polynomial is identically $0$. If the latter is true for
all $I$ and for all flattenings, then conclude that $\omega$ lies in $\eqm(\tre)$ because of Theorem~\ref{thm:mainIX}.
\end{proof}

The set-up of our algorithm (given $M$) starting from $\tre_\mm$ is as follows.
In the deterministic setting:
\begin{description}
\item[Precomputation] Compute, once and for all, a set $E_\ms$ of equations for
$\eqm_\ms$ for all $\ms \leq M$.
\item[Input] $\omega \in V^{\otimes [\mm]}$.
\item[Output] True or false (the answer to the question whether $\omega \in \eqm_{\mm}$).
\item[Algorithm] For each $I \subseteq [\mm]$ with $|I| \geq
\mm-M$, check whether the composition of the equations in
$E_{\mm-|I|}$ with the formal contraction of $\omega$ along a general $G$-invariant element of $(V^*)^{\otimes I}$ is identically $0$. If this is the case for all $I$, then output `true', else output `false'.

\end{description}

The number of scalar arithmetic operations in this
algorithm is bounded by a polynomial in $\di^\mm$, where the
degree of that polynomial depends on the degrees of the
equations found in the pre-computation step. Observe that
running with $I$ over all sufficiently large subsets of
$[\mm]$ contributes only a factor $\mathcal{O}(\mm^M)$, which
is poly-logarithmic in $\di^\mm$.
In the probabilistic setting:

\begin{description}
\item[Precomputation] Compute, once and for all, a set $E_\ms$ of equations for
$\eqm_\ms$ for all $\ms \leq M$.
\item[Input] $\omega \in V^{\otimes [\mm]}$.
\item[Output] True or false (the (probable) answer to the question $\omega \in
\eqm_{\mm}$?).
\item[Algorithm] For each $I \subseteq [\mm]$ with $|I| \geq
\mm-M$, generate a random element $\ten$ of $(V^*)^{\otimes
I}$ and compute whether all equations in $E_{\mm-|I|}$
vanish on $\ten(\omega)$ (with $\ten$ viewed as a
contraction $V^{\otimes [\mm]} \to V^{\otimes [\mm]-I}$).
If this is the case for all $I$, then output `true', else output `false'.
\end{description}

The number of scalar arithmetic operations in this case is
linear in $\di^\mm \cdot \mm^M$.


\begin{thebibliography}{CFS11}

\bibitem[AR08]{Allman04}
Elizabeth~S. Allman and John~A. Rhodes.
\newblock Phylogenetic ideals and varieties for the general {M}arkov model.
\newblock {\em Adv. Appl. Math.}, 40(2):127--148, 2008.

\bibitem[BO11]{Bates10}
Daniel~J. Bates and Luke Oeding.
\newblock Toward a salmon conjecture.
\newblock {\em Exp. Math.}, 20(3):358--370, 2011.

\bibitem[Bor91]{Borel91}
Armand Borel.
\newblock {\em Linear Algebraic Groups}.
\newblock Springer-Verlag, New York, 1991.

\bibitem[BSS89]{Blum89}
Lenore Blum, Mike Shub, and Steve Smale.
\newblock On a theory of computation and complexity over the real numbers:
  {$NP$}-completeness, recursive functions and universal machines.
\newblock {\em Bull. Am. Math. Soc. (N.S.)}, 21(1):1--46, 07 1989.

\bibitem[BW07]{Buczynska07}
Weronika Buczy\'nska and Jaros\l aw~A. Wi\'sniewski.
\newblock On geometry of binary symmetric models of phylogenetic trees.
\newblock {\em J. Eur. Math. Soc.}, 9(3):609--635, 2007.

\bibitem[CFS08]{Casanellas07}
Marta Casanellas and Jes\'{u}s Fern\'{a}ndez-S\'{a}nchez.
\newblock The geometry of the {K}imura 3-parameter model.
\newblock {\em Advances in Applied Mathematics}, 41:265--292, 2008.

\bibitem[CFS11]{Casanellas09}
Marta Casanellas and Jes{\'u}s Fern{\'a}ndez-S{'a}nchez.
\newblock Relevant phylogenetic invariants of evolutionary models.
\newblock {\em J. Math. Pures Appl. (9)}, 96(3):207--229, 2011.

\bibitem[Cip07]{Cipra07}
Barry~A. Cipra.
\newblock Algebraic geometers see ideal approach to phylogenetics.
\newblock {\em SIAM News}, 40(6), 2007.

\bibitem[Coh67]{Cohen67}
Daniel~E. Cohen.
\newblock On the laws of a metabelian variety.
\newblock {\em J. Algebra}, 5:267--273, 1967.

\bibitem[CS05]{Casanellas05}
Marta Casanellas and Seth Sullivant.
\newblock The strand symmetric model.
\newblock In {\em Algebraic Statistics for Computational Biology}. Cambridge
  University Press, Cambridge, 2005.

\bibitem[DK09]{Draisma07b}
Jan Draisma and Jochen Kuttler.
\newblock On the ideals of equivariant tree models.
\newblock {\em Math. Ann.}, 344(3):619--644, 2009.

\bibitem[DK14]{Draisma11d}
Jan {Draisma} and Jochen {Kuttler}.
\newblock {Bounded-rank tensors are defined in bounded degree.}
\newblock {\em {Duke Math. J.}}, 163(1):35--63, 2014.

\bibitem[Dra10]{Draisma08b}
Jan Draisma.
\newblock Finiteness for the k-factor model and chirality varieties.
\newblock {\em Adv. Math.}, 223:243--256, 2010.

\bibitem[FG]{Friedland11}
Shmuel Friedland and Elizabeth Gross.
\newblock A proof of the set-theoretic version of the {S}almon {C}onjecture.
\newblock Preprint, available from \verb+http://arxiv.org/abs/1104.1776+.

\bibitem[GSS05]{Garcia05}
Luis~D. Garcia, Michael Stillman, and Bernd Sturmfels.
\newblock Algebraic geometry of {B}ayesian networks.
\newblock {\em J. Symb. Comp.}, 39(3--4):331--355, 2005.

\bibitem[HS09]{Hillar09}
Chris~J. Hillar and Seth Sullivant.
\newblock Finite {G}r\"obner bases in infinite dimensional polynomial rings and
  applications.
\newblock Preprint, available from \verb+http://arxiv.org/abs/0908.1777+, 2009.

\bibitem[LM04]{Landsberg04}
Joseph~M. Landsberg and Laurent Manivel.
\newblock On the ideals of secant varieties of {S}egre varieties.
\newblock {\em Found. Comput. Math.}, 4(4):397--422, 2004.

\bibitem[Mica]{Michalek12}
Mateusz Michalek.
\newblock On toric varieties arising from group-based models.
\newblock Preprint, available from \verb+http://arxiv.org/abs/1207.0930+.

\bibitem[Micb]{Michalek11}
Mateusz Michalek.
\newblock Toric geometry of the 3-{K}imura model for any tree.
\newblock Preprint, available from \verb+http://arxiv.org/abs/1102.4733+.

\bibitem[PS05]{Pachter05}
Lior Pachter and Bernd Sturmfels, editors.
\newblock {\em Algebraic Statistics for Computational Biology}, Cambridge,
  2005. Cambridge University Press.

\bibitem[Rai11]{Raicu10}
Claudiu Raicu.
\newblock Secant varieties of segre--veronese varieties.
\newblock 2011.
\newblock Preprint at \verb+http://arxiv.org/abs/1011.5867+.

\bibitem[RS95]{Robertson95}
Neil {Robertson} and P.D. {Seymour}.
\newblock {Graph minors. XIII: The disjoint paths problem.}
\newblock {\em {J. Comb. Theory, Ser. B}}, 63(1):65--110, 1995.

\bibitem[RS04]{Robertson04}
Neil {Robertson} and P.D. {Seymour}.
\newblock {Graph minors. XX: Wagner's conjecture.}
\newblock {\em {J. Comb. Theory, Ser. B}}, 92(2):325--357, 2004.

\bibitem[SS05]{Sturmfels05b}
Bernd Sturmfels and Seth Sullivant.
\newblock Toric ideals of phylogenetic invariants.
\newblock {\em Journal of Computational Biology}, 12:204--228, 2005.

\end{thebibliography}
\end{document}